\newtheorem{theorem}{Theorem}[section]
\newtheorem{corollary}[theorem]{Corollary}
\newtheorem{lemma}[theorem]{Lemma}
\newtheorem{fact}[theorem]{Fact}
\newtheorem*{step1}{Step 1}
\newtheorem*{step2}{Step 2}
\newtheorem*{step3}{Step 3}
\newtheorem*{step4}{Step 4}
\newtheorem*{step5}{Step 5}
\newtheorem*{step6}{Step 6}
\theoremstyle{remark}
\newtheorem*{remark}{Remark}
\theoremstyle{definition}
\newtheorem{Remark}[theorem]{Remark}
\numberwithin{equation}{section}
\newcommand{\R}{\mathbb{R}}
\newcommand{\N}{\mathbb{N}}
\newcommand{\Z}{\mathbb{Z}}
\newcommand{\ind}{\mathds{1}}
\newcommand{\minus}{\setminus}
\newcommand{\comma}{\mathpunct{\raisebox{0.5ex}{,}}}
\author[M. Fakhoury]{Micheline Fakhoury}
\address{Univ. Artois, UR 2462, Laboratoire de Math\'{e}matiques de Lens (LML)\\ F-62300 Lens, France}
\email{micheline.fakhoury@univ-artois.fr}
\begin{document}

\title[Plasticity of the unit ball of $C(K)$]{Plasticity of the unit ball \\of some $C(K)$ spaces}

\begin{abstract} We show that if $K$ is a compact metrizable space with finitely many accumulation points, then the closed unit ball of $C(K)$ is a plastic metric space, which means that any non-expansive bijection from $B_{C(K)}$ onto itself is in fact an isometry. We also show that if $K$ is a zero-dimensional compact Hausdorff space with a dense set of isolated points, then any non-expansive homeomorphism of $B_{C(K)}$ is an isometry.
\end{abstract}

\keywords{Plasticity; non-expansive bijections; isometries; $C(K)$ spaces.}
\subjclass[2020]{46B20, 46B25, 51F30}
\thanks{The author would like to thank \'Etienne Matheron for his help regarding this paper.}
\thanks{This work was supported in part by
	the project FRONT of the French
	National Research Agency (grant ANR-17-CE40-0021).}
\maketitle
\section{Introduction}

Following \cite{NPW}, a metric space $(M,\rho)$ is said to be \emph{plastic} if every non-expansive bijection $F:M\to M$ is in fact an isometry (``non-expansive'' means ``$1$-Lipschitz''). For example, it is a well known classical fact that any compact metric space is plastic; more generally, any totally bounded metric space is plastic (see \cite{NPW} for a proof and for historical references, and see \cite{AKZ} for an extension to the setting of uniform spaces). On the other hand, every non-trivial normed space is non-plastic, as shown by $F(x)=\frac12 x$. Several interesting examples can be found in \cite{NPW}.

\medskip
In recent years, the following intriguing problem was considered by a number of authors: is it true that for any real Banach space $X$, the closed unit ball $B_X$ is plastic? As the formulation suggests, no counterexample is known. On the other hand, several positive results have been obtained. For example, the following Banach spaces have a plastic unit ball (we state the results more or less in chronological order):
\begin{itemize}
\item[-] Any finite-dimensional normed space $X$, since in this case $B_X$ is compact.
\item[-] Any strictly convex space (\cite{CKOW}). 
\item[-] The space $\ell_1$ (\cite{KZ1}).
\item[-] Any Banach space whose unit sphere is the union of its finite-dimensional polyhedral extreme subsets (\cite{AKZ}).
\item[-] Any $\ell_1\,$-$\,$direct sum of strictly convex spaces (\cite{KZ2}).
\item[-] The space $c$ of all convergent sequences of real numbers (\cite{L}); and the space $c_0$ if one weakens the definition of plasticity by considering only non-expansive bijections with a continuous inverse (\cite{L}).
\item[-] Any $\ell_\infty\,$-$\,$direct sum of  \emph{two} strictly convex spaces (\cite{HLZ}).
\item[-] The space $\ell_1\oplus_2 \R$ (\cite{HLZ}).
\end{itemize}

\medskip Obvious ``missing examples'' are the spaces $L_1=L_1(0,1)$ or $C(K)$, where $K$ is a compact Hausdorff space (here, of course, $C(K)$ is the Banach space of all real-valued continuous functions on $K$). 

\smallskip
Note that the space $c$ mentioned above is a $C(K)$ space, in fact the simplest infinite-dimensional such space: indeed, $c$ is isometric to $C(K)$ for any compact metric space $K$ with exactly $1$ accumulation point. In this paper, we enlarge a little bit the list of Banach spaces with a plastic unit ball by proving the following result.

\begin{theorem}\label{main} If $K$ is a  compact metrizable space with finitely many accumulation points, then $B_{C(K)}$ is plastic.
\end{theorem}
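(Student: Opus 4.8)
The plan is to put $K$ into a normal form, use the metric structure of $B_{C(K)}$ to pin down the action of a non-expansive bijection on the facial skeleton, and then glue together local rigidity coming from the known plasticity of $B_c$. First I would note that, since $K$ is compact metrizable with finite derived set $K'=\{a_1,\dots,a_N\}$, $K$ is a countable scattered compact space, hence zero-dimensional; separating the $a_j$ by pairwise disjoint clopen sets and pushing the finitely many leftover isolated points into one of them, one may write $K=\bigsqcup_{j=1}^{N}U_j$ with each $U_j$ clopen and homeomorphic to a convergent sequence together with its limit. Then $C(K)$ is isometric to $\bigl(\bigoplus_{j=1}^{N}c\bigr)_\infty$, and the statement becomes the plasticity of the unit ball of this space. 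I would run an induction on $N$ (the case $N=0$ being the compact case handled in \cite{NPW}, and $N=1$ being the plasticity of $B_c$ from \cite{L}); fix once and for all a non-expansive bijection $F\colon B_{C(K)}\to B_{C(K)}$ and aim to show it is an isometry.

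For the easy reductions I would observe that $0$ is the unique point $z$ of $B_{C(K)}$ with $\sup_{x\in B_{C(K)}}\|z-x\|=1$ (this supremum always equals $1+\|z\|$), and that since $F$ is a non-expansive \emph{bijection} one has $\sup_x\|F(z)-x\|=\sup_x\|F(z)-F(x)\|\le\sup_x\|z-x\|$; taking $z=0$ gives $F(0)=0$ and in general $\|F(x)\|\le\|x\|$. The real first step, though, is to analyse the facets (maximal proper faces) of $B_{C(K)}$: these are exactly the sets $\mathcal{F}_t^{\pm}=\{f\in B_{C(K)}:f(t)=\pm1\}$, $t\in K$, and the pairs $(\mathcal{F}_t^{+},\mathcal{F}_t^{-})$ are precisely the pairs of facets at the diametral distance $2$. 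I would try to characterise these configurations \emph{metrically} so that $F$ is forced to permute them; this is where the absence of strict convexity bites, since, unlike for the $\ell_\infty$-sums of strictly convex spaces treated in \cite{HLZ}, the relevant maximal sets here are not of the simple form $\{x\}\times B$, and one has to work directly with the rich but explicit facial structure of $B_{C(K)}$. Granting this, $F$ would induce a self-homeomorphism $\sigma$ of $K$ together with signs $(\eta_t)_{t\in K}$ such that $F(\mathcal{F}_t^{\eta})=\mathcal{F}_{\sigma(t)}^{\eta\eta_t}$; since $\sigma$ permutes the $a_j$ it permutes the clusters, so after composing $F$ with a suitable surjective linear isometry of $C(K)$ (of Banach--Stone type) one may assume $\sigma=\mathrm{id}$ and $\eta_t\equiv1$, i.e.\ $f(t)=1\iff F(f)(t)=1$ for every $t$. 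In particular $F$ then preserves the norm.

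Once $F$ respects every coordinate value $f(t)=\pm1$, I would exploit this on slices. For clopen $A\subseteq K$ and continuous $\tau\colon K\setminus A\to\{-1,1\}$, the slice $\{f\in B_{C(K)}:f|_{K\setminus A}=\tau\}$ is an intersection of $F$-invariant facets, hence is mapped by $F$ bijectively onto itself, and it is affinely isometric to $B_{C(A)}$. Taking $A=U_j$ yields a non-expansive bijection of a copy of $B_c$, hence an isometry there by \cite{L}; taking $A$ a union of a proper subfamily of the clusters yields, by the inductive hypothesis, an isometry on a copy of $B_{(\bigoplus c)_\infty}$; and taking $A$ to be the complement of a finite set of isolated points yields, via ``compact $\Rightarrow$ plastic'' \cite{NPW}, an isometry on a finite-dimensional cube. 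Thus $F$ would be known to be an isometry on a large supply of slices and sub-faces of $B_{C(K)}$, in particular on every slice obtained by prescribing $\pm1$ values of $f$ on all but one cluster.

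Finally I would have to pass from ``$F$ is an isometry on each such slice'' to ``$F$ is an isometry on all of $B_{C(K)}$'', and I expect \emph{this} to be the main obstacle. The natural target is to prove that $F$ is essentially coordinatewise: in the decomposition $C(K)=\bigl(\bigoplus_j C(U_j)\bigr)_\infty$ one should have $F(f)=(\Phi_1(f_1),\dots,\Phi_N(f_N))$, where $\Phi_j(g):=F(\,\dots,1,g,1,\dots)_j$ is a non-expansive bijection of $B_{C(U_j)}$, hence an isometry by \cite{L}; then $\|F(f)-F(g)\|=\max_j\|\Phi_j(f_j)-\Phi_j(g_j)\|=\max_j\|f_j-g_j\|=\|f-g\|$ and the theorem follows. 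The difficulty is that the sign-pattern slices on which $F$ is already controlled are only ``extreme-point dense'' in $B_{C(K)}$, and since $F$ is merely $1$-Lipschitz (not affine) one cannot propagate the rigidity off these slices by convexity; showing that modifying $f$ on one cluster cannot change $F(f)$ on another will require a hands-on argument combining the norm-preservation above with a careful study of how $F$ acts on the segments joining extreme points and on their metric neighbourhoods. This gluing, together with the metric characterisation of the facets in Step two, is where I expect essentially all of the work to lie.
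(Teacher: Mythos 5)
Your reduction of $K$ to a disjoint union of convergent sequences and the identification $C(K)\cong\bigl(\bigoplus_{j=1}^{N}c\bigr)_\infty$ are fine, but the two load-bearing steps of your plan are both left unproved, and they are exactly where the difficulty lives. First, the claim that a merely non-expansive bijection must permute the facets $\mathcal{F}_t^{\pm}=\{f\in B_{C(K)}:f(t)=\pm1\}$ does not follow from observing that the pairs $(\mathcal{F}_t^{+},\mathcal{F}_t^{-})$ are the pairs of facets at mutual distance $2$: that characterizes these pairs \emph{among facets}, but gives no reason why $F$, which is not assumed to respect the affine or facial structure, should map a facet into a facet at all. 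The only general tool available (Theorem~\ref{th1}, from \cite{CKOW}) preserves extreme points under $F^{-1}$, not under $F$, and in the paper's Theorem~\ref{main3} the hypothesis that $F$ sends extreme points to extreme points has to be \emph{added} as an assumption in a closely related setting. The paper never establishes facet preservation; it works only at isolated points $a\in K\setminus K'$, using the covering criterion of Lemma~\ref{prop1} (two balls of radius $1$ cover $B$ iff their centers are opposite-sign multiples of $\ind_{\{u\}}$ at an isolated $u$) to show that $F$ carries the spikes $t\ind_{\{a\}}$ to spikes at some $\sigma_0(a)$, and from this extracts only the weak one-sided sign information of Lemma~\ref{l1} (e.g.\ $f(a)\alpha(a)>0\Rightarrow F(f)(\sigma(a))\geqslant 0$), with nothing said at this stage about values at accumulation points. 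So ``granting this'' in your second step is granting an unproved, and as stated unjustified, structural rigidity.

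Second, the gluing step, which you yourself flag as the main obstacle, is where essentially all of the paper's work lies: your slices $\{f\in B_{C(K)}:f|_{K\setminus A}=\tau\}$ consist of functions equal to $\pm1$ off $A$, a thin family nowhere near a generic element of $B_{C(K)}$, and since $F$ is not affine, knowing that $F$ restricts to an isometry on each such slice propagates to nothing outside them. The paper's substitute is quantitative: after conjugating by the Banach--Stone-type isometry built from Lemma~\ref{l1}, it proves by induction on $n$ (Lemma~\ref{l17}) that $\Phi(f)(x)=f(x)$ whenever $\vert f(x)\vert<u_{n,f}=\frac{n+u_{0,f}}{n+1}$, using comparison functions taking finitely many values, the plasticity of finite-dimensional balls combined with Mankiewicz's theorem (Fact~\ref{l10}), and the delicate Fact~\ref{l12}, which is where metrizability of $K$ (via a zero-set argument) is actually needed; a density argument then yields $\Phi=Id_B$. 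Nothing in your outline replaces this analysis, so the proposal, while a reasonable plan, has two genuine gaps --- the facet-permutation step and the coordinatewise/gluing step --- and closing either would require the kind of hands-on estimates the paper carries out.
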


\smallskip
The assumptions made on the space $K$ in Theorem~\ref{main} are, of course, extremely strong. However, it will turn out that in most parts of the proof they can be relaxed; and this will give the following result, whose conclusion is weaker but which applies to a much more general class of compact Hausdorff spaces.  Recall that a topological space is said to be \emph{zero-dimensional} if it has a basis consisting of clopen sets.
\begin{theorem}\label{main2} If $K$ is a zero-dimensional compact Hausdorff space with a dense set of isolated points, then any non-expansive homeomorphism $F:B_{C(K)}\to B_{C(K)}$ is an isometry.
\end{theorem}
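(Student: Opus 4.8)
\emph{Overall strategy.} I would follow the scheme used for strictly convex spaces in \cite{CKOW} and for $c$ and $c_{0}$ in \cite{L}, the key point being that under these hypotheses $B_{C(K)}$ has a large supply of extreme points. Write $B=B_{C(K)}$, $S=S_{C(K)}$ and $B^{\circ}=B\setminus S$; we may assume $K$ infinite, since otherwise $B$ is compact and the result is the classical fact recalled in the introduction. Note that $F$, being $1$-Lipschitz, is automatically continuous, and that $F^{-1}$ is \emph{expansive}: $\|F^{-1}u-F^{-1}v\|\ge\|u-v\|$ for all $u,v$. The homeomorphism hypothesis adds that $F^{-1}$ is continuous, and that is the extra ingredient used at two crucial moments.

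\emph{Step 1: locating $0$ and the sphere.} For every $h\in B$, $\|F(0)-h\|=\|F(0)-F(F^{-1}h)\|\le\|F^{-1}h\|\le 1$; since $0$ is the only point of $B$ at distance $\le 1$ from all of $B$ (for $z\ne0$ one has $-z/\|z\|\in B$ with $\|z-(-z/\|z\|)\|=1+\|z\|>1$), this forces $F(0)=0$. Next, $f\in B$ lies on $S$ if and only if there is $g\in B$ with $\|f-g\|=2$ (for the nontrivial direction, pick $x_{0}\in K$ with $|f(x_{0})|=1$ and take $g$ to be the constant $-f(x_{0})$). Combining this description of $S$ with the expansiveness of $F^{-1}$ yields immediately $F(B^{\circ})\subseteq B^{\circ}$, hence $F(S)\supseteq S$; and using that $F$ is a homeomorphism one upgrades this to $F(S)=S$ and $F(B^{\circ})=B^{\circ}$.

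\emph{Step 2: the extreme points (the crux).} Since $K$ is zero-dimensional, $\operatorname{ext}B=\{\chi_{A}:=\mathbf 1_{A}-\mathbf 1_{K\setminus A}:A\text{ clopen}\}$, distinct extreme points lie at distance exactly $2$, and --- since the isolated points of $K$ are dense --- the algebra $\operatorname{Clopen}(K)$ is atomic, its atoms being the singletons of isolated points. The heart of the matter is to show that $F$ carries $\operatorname{ext}B$ onto itself and that the induced permutation of $\operatorname{Clopen}(K)$, up to a global complementation, respects the Boolean operations; by Stone duality this produces a homeomorphism $\tau\colon K\to K$ and a sign $h\in C(K,\{-1,1\})$ with $F$ coinciding on $\operatorname{ext}B$ with the surjective linear isometry $T\colon g\mapsto h\cdot(g\circ\tau)$ of $C(K)$. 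I expect the main difficulty to lie here: $F$ is only non-expansive, not affine, so the usual linear descriptions of extreme points are useless and one must detect the extreme points and the Boolean structure purely through distances. The facts to exploit are that $\operatorname{ext}B$ is a \emph{maximal} $2$-separated subset of $B$ (any $f$ with $\|f\|<1$, or with $\|f\|=1$ but $|f|\not\equiv 1$, is at distance $<2$ from some $\chi_{A}$), that $F^{-1}$ maps $2$-separated sets to $2$-separated sets (by expansiveness), and that for an isolated $t$ the pair $\pm\chi_{\{t\}}$, together with $F(S)=S$ and the behaviour of $F$ on the scaled balls $rB$ (which follows from $F(0)=0$), should pin down the coordinate face $\{f:f(t)=\pm1\}$.

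\emph{Step 3: reduction and conclusion.} Granting Step 2, replace $F$ by $T^{-1}\circ F$: still a non-expansive homeomorphism of $B$ onto itself fixing $0$, and now fixing every extreme point. It remains to prove that such an $F$ is the identity. Since $F$ fixes $\pm\mathbf 1$, non-expansiveness forces $F(c\mathbf 1)=c\mathbf 1$ for each $c\in[-1,1]$; then, by induction on the number of values, using arbitrarily fine clopen partitions of $K$ (here the density of the isolated points is again essential) and the estimates $\|F(f)-\chi_{A}\|\le\|f-\chi_{A}\|$ and $\|F(f)-c\mathbf 1\|\le\|f-c\mathbf 1\|$ to control $F(f)$ value by value, $F$ should fix every locally constant function in $B$. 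These are dense in $B$ and $F$ is continuous, so $F=\operatorname{id}$. Propagating this induction is the second, more technical obstacle, and it is the point at which the continuity of $F^{-1}$ is genuinely needed --- exactly as for $c_{0}$ in \cite{L} --- as well as the part of the argument that must be reorganised, and the homeomorphism hypothesis dropped, to obtain the stronger Theorem~\ref{main}.
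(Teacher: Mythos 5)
Your outline follows the same overall architecture as the paper -- extract from the action of $F$ on extreme points and indicator functions a weighted composition isometry, replace $F$ by its composition with the inverse of that isometry, and show the reduced map is the identity by treating simple functions and invoking density and continuity -- but the two steps you yourself label as ``the crux'' and ``the more technical obstacle'' are precisely the content of the proof, and the tools you indicate for them do not suffice, so there is a genuine gap. In Step 2, the elementary fact (Theorem~\ref{th1}) goes the wrong way: it is $F^{-1}$, not $F$, that sends extreme points to extreme points, and the expansiveness of $F^{-1}$ preserving $2$-separated sets again controls only $F^{-1}$ on $\mathrm{ext}\,B$; that $F$ permutes $\mathrm{ext}\,B$ comes out in the paper only \emph{after} the reduction is complete (Lemma~\ref{l2}). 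The paper instead builds the point map $\sigma$ on isolated points from $F(t\ind_{\{a\}})$ via the two-ball covering characterization (Lemma~\ref{prop1}) and the identity $F^{-1}(g)=\alpha\cdot(g\circ\sigma)$ for extreme $g$ (Fact~\ref{keyformulabiz}); and the delicate point -- injectivity of $\sigma$, without which your $\tau$ is not a homeomorphism and $T$ is not an isometry -- is exactly one of the two places where continuity of $F^{-1}$ is needed: it is proved by the net argument of Section~\ref{S4} (Facts~\ref{f1}--\ref{f3}), and the Remark after Lemma~\ref{l1} exhibits a continuous surjection $\sigma$ with all the other properties that fails to be injective when $K'$ is infinite, so atomicity of the clopen algebra and Stone duality cannot deliver this for free.

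In Step 3, the estimates you propose ($\Vert F(f)-\chi_A\Vert\leqslant\Vert f-\chi_A\Vert$ and $\Vert F(f)-c\ind\Vert\leqslant\Vert f-c\ind\Vert$) give only one-sided information: for the reduced map $\Phi$ they yield that $\Phi(f)(x)$ lies between $0$ and $f(x)$ (Lemma~\ref{l5}), and the real difficulty is to exclude shrinking towards $0$ at points where $\vert f(x)\vert$ exceeds $\min_{z\in K'}\vert f(z)\vert$, because there the natural comparison functions equal to $\pm1$ at $x$ and known to be fixed by $\Phi$ are no longer within the required distance of $f$. The paper handles this with the threshold induction of Lemma~\ref{l17} (the sequence $u_{n,f}$), which in turn rests on showing that $\Phi$ is the identity on the slices $B_{\mathbf V,\mathbf h}$ via Mankiewicz's theorem together with plasticity of the finite-dimensional ball (Fact~\ref{l10}) and on the inclusion $\Phi^{-1}(B_{\mathbf V,\mathbf h})\subseteq B_{\mathbf V,\mathbf h}$ (Fact~\ref{l12}) -- the second place where continuity of $F^{-1}$ enters, through Corollary~\ref{l11} and the remark following it. None of this is supplied by ``control $F(f)$ value by value'', so the induction on locally constant functions cannot be propagated as you state it. A smaller point: your Step 1 claim that the homeomorphism hypothesis upgrades $F(S)\supseteq S$ to $F(S)=S$ is unjustified (in infinite dimensions the sphere is not topologically distinguished inside the ball), though the paper's route never needs it.
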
 

\begin{corollary} If $K$ is a countable compact Hausdorff space, then any non-expansive homeomorphism of $B_{C(K)}$ is an isometry.
\end{corollary}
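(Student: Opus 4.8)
The plan is to deduce the corollary directly from Theorem~\ref{main2}, by checking that a countable compact Hausdorff space falls within its scope; that is, that such a space is zero-dimensional and has a dense set of isolated points. So the whole proof reduces to two standard point-set topology facts.

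First I would observe that a countable compact Hausdorff space $K$ is \emph{scattered}. Let $P\subseteq K$ be its perfect kernel, i.e.\ the intersection of all its iterated Cantor--Bendixson derivatives (equivalently, the largest subset of $K$ with no isolated point). Then $P$ is closed in $K$, hence compact, and $P$ has no isolated point. If $P$ were nonempty it would be an uncountable set, because a nonempty perfect compact Hausdorff space is uncountable (the classical Baire category argument). Since $K$ is countable, this forces $P=\emptyset$, i.e.\ $K$ is scattered.

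Next I would check that scatteredness yields the two properties required in Theorem~\ref{main2}. The set $D$ of isolated points of $K$ is dense: if $U$ is a nonempty open subset of $K$, then, $U$ being a nonempty subset of a scattered space, it contains a point $x$ that is isolated in $U$; as $U$ is open in $K$, the point $x$ is in fact isolated in $K$, so $x\in U\cap D$. And $K$ is zero-dimensional: a scattered space is totally disconnected (a connected Hausdorff space with more than one point has no isolated point, so the connected components of $K$ are singletons), and a totally disconnected compact Hausdorff space is zero-dimensional. Theorem~\ref{main2} then applies and gives exactly the stated conclusion.

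Since every step here is routine, I do not expect a genuine obstacle; the only mildly delicate ingredient is that a nonempty perfect compact Hausdorff space is uncountable, which is classical. As an alternative one could invoke the Mazurkiewicz--Sierpi\'nski theorem, identifying the countable compact Hausdorff spaces with the countable successor ordinals under the order topology, and verify zero-dimensionality and density of the isolated points on that concrete model; but the scatteredness argument above is more economical and avoids the metrizability detour (note that such a $K$ need not have finitely many accumulation points, so Theorem~\ref{main} itself does not suffice).
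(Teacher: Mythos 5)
Your proof is correct and follows exactly the paper's route: the paper simply declares the corollary ``obvious by Theorem~\ref{main2}'', and you supply the standard point-set verification (countable compact Hausdorff $\Rightarrow$ scattered $\Rightarrow$ dense isolated points and zero-dimensionality) that the paper leaves implicit. Nothing is missing.
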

\begin{proof} This is obvious by Theorem~\ref{main2}.
\end{proof}

\begin{corollary} Any non-expansive homeomorphism of $B_{\ell_\infty}$ is an isometry.
\end{corollary}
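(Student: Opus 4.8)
The plan is to realise $\ell_\infty$ as a $C(K)$ space to which Theorem~\ref{main2} applies. Write $\ell_\infty=\ell_\infty(\N)$ and recall that it is isometrically isomorphic to $C(\beta\N)$, where $\beta\N$ is the Stone--\v{C}ech compactification of the countable discrete space $\N$: every bounded real function on $\N$ extends uniquely to a continuous function on $\beta\N$, and this extension map is an isometric (algebra) isomorphism of $\ell_\infty(\N)$ onto $C(\beta\N)$. Since an isometric isomorphism $T\colon X\to Y$ of Banach spaces restricts to a surjective isometry $B_X\to B_Y$, conjugation by this restriction turns non-expansive homeomorphisms (resp.\ isometries) of $B_{\ell_\infty}$ into non-expansive homeomorphisms (resp.\ isometries) of $B_{C(\beta\N)}$ and conversely; so the assertion of the corollary is equivalent to the corresponding statement for $B_{C(\beta\N)}$.

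It remains to check that $K=\beta\N$ satisfies the hypotheses of Theorem~\ref{main2}. It is compact Hausdorff by construction. It has a dense set of isolated points: $\N$ is dense in $\beta\N$, and for each $n\in\N$ the singleton $\{n\}$ is clopen in $\beta\N$ (the function $\ind_{\{n\}}$ is bounded on $\N$, hence extends continuously), so each $n$ is isolated. And $\beta\N$ is zero-dimensional: it is (homeomorphic to) the Stone space of the Boolean algebra $\mathcal{P}(\N)$, and Stone spaces of Boolean algebras are totally disconnected compact Hausdorff, hence zero-dimensional; equivalently, the closures $\overline{A}$ of subsets $A\subseteq\N$ form a clopen basis for the topology of $\beta\N$. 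Applying Theorem~\ref{main2} with $K=\beta\N$ then yields that every non-expansive homeomorphism of $B_{C(\beta\N)}\cong B_{\ell_\infty}$ is an isometry.

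There is essentially no obstacle here beyond recalling these standard facts about $\beta\N$; the entire content of the corollary is carried by Theorem~\ref{main2}, and the proof is of the same ``one line'' type as that of the preceding corollary.
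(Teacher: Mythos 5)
Your proposal is correct and follows exactly the paper's own argument: identify $\ell_\infty$ isometrically with $C(\beta\N)$ and apply Theorem~\ref{main2}, after noting that $\beta\N$ is zero-dimensional with $\N$ as a dense set of isolated points. The extra details you supply (clopenness of singletons $\{n\}$, the Stone-space description of $\beta\N$, transfer of the statement under an isometric isomorphism) are standard and only flesh out the same one-line proof.
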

\begin{proof} The space $\ell_\infty$ is isometric to $C(\beta\N)$, where $\beta\N$ is the Stone-$\check{\rm C}$ech compactification of $\N$. The space $\beta\N$ is zero-dimensional with a dense set of isolated points (namely $\N$), so we may apply Theorem~\ref{main2}.
\end{proof}

It would be nice to remove the assumption that $F^{-1}$ is continuous in Theorem~\ref{main2}, since this would say that $B_{C(K)}$ is plastic. Actually, it seems  quite plausible that for an arbitrary Banach space $X$, any non-expansive bijection $F:B_X\to B_X$ is a homeomorphism; but we have been unable to prove that.

\medskip The paper is organized as follows. In Section~\ref{S2}, we collect some preliminary facts. In Section~\ref{S3}, which is the longest part of the paper, we prove Theorem~\ref{main}. As may be guessed, several ideas are borrowed from the clever proof given by N. Leo \cite{L} in the case of $c$. However, exactly as in the cases of $\ell_1\,$-$\,$sums of strictly convex spaces in \cite{KZ2} and of $\ell_\infty\,$-$\,$sums of two strictly convex spaces in \cite{HLZ}, a number of technicalities appear, which makes the whole proof rather long. (What is in fact missing is a general result concerning direct sums of Banach spaces with plastic unit balls, which is yet to be found.) In Section~\ref{S4}, we prove Theorem~\ref{main2} by slightly modifying the proof of Theorem~\ref{main}. Finally, in Section~\ref{S5} we prove some additional results in the spirit of Theorem~\ref{main2}.

\medskip 

\section{Preliminary facts}\label{S2}

In this short section, we collect a few results that will be needed for the proof of Theorem~\ref{main}.

\medskip
The following general theorem was proved in \cite{CKOW}.
\begin{theorem}\label{th1}
Let $X$ be a normed space. Denote by $S_X$ and $B_X$ the unit sphere and the closed unit ball of $X$ respectively, and by $\mathrm{ext} (B_X)$ the set of extreme points of $B_X$. 
 The following facts hold true for any non-expansive bijection $F:B_X \rightarrow B_X$.
\begin{enumerate}[label={\rm \arabic*)}]
\item $F(0)=0$,
\item if $x \in S_X $, then $F^{-1}(x) \in S_X$,
\item if $x \in \mathrm{ext }(B_X)$, then $F^{-1}(x) \in\mathrm{ext }(B_X)$ and $F^{-1}(\alpha x)= \alpha F^{-1}(x)$ for every $\qquad \alpha \in [-1,1]$.
\end{enumerate}
\end{theorem}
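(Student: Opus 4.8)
The plan is to dispose of (1) and (2) quickly, and then to prove the two clauses of (3) in turn: the antipodal identity $F^{-1}(-x)=-F^{-1}(x)$, the full linearity $F^{-1}(\alpha x)=\alpha F^{-1}(x)$, and finally that $F^{-1}(x)$ is extreme. For (1): since $F$ maps $B_X$ \emph{onto} $B_X$, every $y\in B_X$ has the form $F(w)$ with $w\in B_X$, so $\|y-F(0)\|=\|F(w)-F(0)\|\le\|w\|\le 1$, i.e. $B_X\subseteq\overline{B}(F(0),1)$. If $F(0)=:v\neq 0$, then $y_0:=-v/\|v\|$ lies in $B_X$, whereas $y_0-v=-(1+\|v\|^{-1})v$ has norm $1+\|v\|>1$ — contradiction. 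Hence $F(0)=0$, and (2) is then immediate: if $x\in S_X$ and $\|F^{-1}(x)\|<1$ then $\|x\|=\|F(F^{-1}(x))\|\le\|F^{-1}(x)\|<1$, absurd, so $F^{-1}(x)\in S_X$.

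Now fix $x\in\mathrm{ext}(B_X)$ and put $z:=F^{-1}(x)$, so $z\in S_X$ by (2). First I would prove $F^{-1}(-x)=-F^{-1}(x)$. Writing $w:=F^{-1}(-x)\in S_X$, non-expansiveness gives $2=\|x-(-x)\|\le\|z-w\|\le\|z\|+\|w\|=2$, so $\|z-w\|=2$; hence the midpoint $m:=\tfrac12(z+w)$ satisfies $\|m-z\|=\|m-w\|=1$, and therefore $g:=F(m)$ satisfies $\|g-x\|\le1$ and $\|g+x\|\le1$, i.e. $x\pm g\in B_X$. Since $x=\tfrac12\big((x+g)+(x-g)\big)$ and $x$ is extreme, $g=0$; thus $F(m)=0=F(0)$, so $m=0$ by injectivity and $w=-z$. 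Equivalently $F(-z)=-x$.

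Next, the linearity $F^{-1}(\alpha x)=\alpha z$ for $\alpha\in[-1,1]$. The cases $\alpha\in\{-1,0,1\}$ are already in hand; fix $\alpha\in(0,1)$ and set $y:=F(\alpha z)$. Applying non-expansiveness to the pairs $(\alpha z,z)$, $(\alpha z,-z)$, $(\alpha z,0)$ and using $F(z)=x$, $F(-z)=-x$, $F(0)=0$, $\|z\|=1$ yields $\|y-x\|\le1-\alpha$, $\|y+x\|\le1+\alpha$ and $\|y\|\le\alpha$. The reverse triangle inequality in the first bound forces $\|y\|\ge\alpha$, hence $\|y\|=\alpha$ and $\|x-y\|=1-\alpha$; then from
\[
2=\|2x\|=\|(x-y)+(x+y)\|\le\|x-y\|+\|x+y\|\le(1-\alpha)+(1+\alpha)=2
\]
we also get $\|x+y\|=1+\alpha$, so $x=\tfrac{1-\alpha}{2}\cdot\tfrac{x-y}{1-\alpha}+\tfrac{1+\alpha}{2}\cdot\tfrac{x+y}{1+\alpha}$ is a convex combination with positive weights of the two unit vectors $\tfrac{x-y}{1-\alpha}$ and $\tfrac{x+y}{1+\alpha}$. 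Extremality of $x$ makes both equal $x$, whence $y=\alpha x$. Applying this to the extreme point $-x$ (using $F(-z)=-x$) covers $\alpha\in(-1,0)$, completing the linearity statement.

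Finally, that $z=F^{-1}(x)$ is extreme — this is the step I expect to be the main obstacle. I would argue by contradiction: if $z=\tfrac12(u+v)$ with $u\neq v$, then $1=\|z\|\le\tfrac12(\|u\|+\|v\|)\le1$ forces $u,v\in S_X$; by (2), $a:=F(u)$ and $b:=F(v)$ lie in $S_X$, are distinct by injectivity, and non-expansiveness gives $\|a-x\|,\|b-x\|\le\tfrac12\|u-v\|$ as well as $\big\|\tfrac12(a+b)-x\big\|\le\tfrac12\|u-v\|$. If one can show that in fact $\tfrac12(a+b)=x$, then $x$ is a non-trivial midpoint of two points of $B_X$, contradicting extremality, and we are done. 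The aim is therefore to pin $a$ and $b$ down exactly, exploiting non-expansiveness of $F$ along the whole segment $[u,v]$ and along the segments $[tu,tv]$ (on which $F$ is controlled through the linearity just established, since $F(tz)=tx$), together with surjectivity of $F$; the genuine difficulty — promoting metric midpoint-type statements to the algebraic identity $x=\tfrac12(a+b)$ without any strict-convexity assumption on $X$ — is the crux, and is where I expect the argument to become technical.
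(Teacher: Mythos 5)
Your treatment of items (1) and (2), of the antipodal identity $F^{-1}(-x)=-F^{-1}(x)$, and of the homogeneity $F^{-1}(\alpha x)=\alpha F^{-1}(x)$ for $\alpha\in[-1,1]$ is correct: the covering argument for $F(0)=0$, the midpoint trick combined with injectivity for $F(-z)=-x$, and the forcing argument $\Vert y\Vert=\alpha$, $\Vert x-y\Vert=1-\alpha$, $\Vert x+y\Vert=1+\alpha$ followed by extremality of $x$ are all sound.

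However, the remaining assertion of item (3), namely that $F^{-1}(x)\in\mathrm{ext}(B_X)$ whenever $x\in\mathrm{ext}(B_X)$, is not proved. You only set up the contradiction hypothesis $z=\tfrac12(u+v)$ with $u\neq v$, record the bounds $\Vert F(u)-x\Vert,\Vert F(v)-x\Vert\leqslant\tfrac12\Vert u-v\Vert$, and observe that it would suffice to establish $\tfrac12\bigl(F(u)+F(v)\bigr)=x$ --- while stating explicitly that you do not know how to get there. This is a genuine gap, and it is the substantive part of the statement: the bounds you have gathered merely place $a=F(u)$ and $b=F(v)$ in the ball of radius $\tfrac12\Vert u-v\Vert$ around $x$, and in a space whose unit ball has flat faces (such as $C(K)$ itself, the case this paper needs) nothing so far forces the midpoint of $a$ and $b$ to equal $x$; some new ingredient beyond non-expansiveness along the segments you mention is required. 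There is also a slip inside the sketch: item (2) controls \emph{preimages} of sphere points, so from $u,v\in S_X$ you cannot conclude $F(u),F(v)\in S_X$ (a non-expansive bijection may well send sphere points into the open ball; (2) only says the converse cannot happen). Note finally that the present paper does not prove this theorem at all --- it quotes it from \cite{CKOW} --- so closing your gap amounts to reproducing (or replacing) the argument of that reference for the preservation of extreme points under $F^{-1}$.
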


\smallskip Next, we state Mankiewicz's local version of the classical Mazur-Ulam Theorem (\cite{M}, see also \cite[Theorem 14.1]{BL}).
\begin{theorem}\label{th2}
Let $X$ and $Y$ be normed spaces and let $U$ be a subset of $X$ and $V$ be a subset of $Y$. If $U$ and $V$ are convex with non-empty interior, then any isometric bijection $\Psi : U \rightarrow V $ extends to an affine isometric bijection $\widetilde \Psi : X \rightarrow Y$. 
\end{theorem}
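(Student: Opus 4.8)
The plan is to follow the classical localization of the Mazur--Ulam theorem (Theorem~\ref{th2} is Mankiewicz's theorem). First I would reduce to the case where $U$ and $V$ are \emph{open}. Note that $\Psi^{-1}:V\to U$ is again an isometric bijection, and I claim that $\Psi\big(\mathrm{int}(U)\big)=\mathrm{int}(V)$. Granting this, one replaces $U,V$ by $\mathrm{int}(U),\mathrm{int}(V)$; if the theorem is proved for these open convex sets, one obtains a bijective affine isometry $\widetilde\Psi:X\to Y$ with $\widetilde\Psi=\Psi$ on $\mathrm{int}(U)$, and since $\Psi$ is continuous (being an isometry) and $\mathrm{int}(U)$ is dense in $U$ (as $U$ is convex with non-empty interior), $\widetilde\Psi=\Psi$ on all of $U$; moreover $\widetilde\Psi(X)=Y$ because $\mathrm{int}(V)\neq\varnothing$. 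To see the claim, observe that if $u\in\mathrm{int}(U)$ and $B(u,r)\subseteq U$, then the metric ball of radius $r$ about $u$ in $(U,\|\cdot\|)$ is the genuine ball $B(u,r)$ of $X$, which is point-symmetric about $u$; in fact $u$ is characterized \emph{metrically} inside $B(u,r)$ by the property that every point $p$ admits an ``antipode'', i.e.\ some $p^{\star}$ with $\|p^{\star}-u\|=\|p-u\|$ and $\|p-p^{\star}\|=\|p-u\|+\|p^{\star}-u\|$ (the centre of \emph{any} normed ball has this property, and it is the only such point). Since $\Psi$ carries $B(u,r)$ isometrically onto the metric ball $B_V(\Psi(u),r)=V\cap B(\Psi(u),r)$ and sends $u$ to $\Psi(u)$, the point $\Psi(u)$ must then have the same antipodal property inside $V\cap B(\Psi(u),r)$; but if $\Psi(u)$ were a boundary point of $V$, this set would be trapped on one side of a supporting hyperplane of $V$ at $\Psi(u)$, and a point of such a truncated ball lying on that hyperplane cannot admit antipodes. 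Hence $\Psi(u)\in\mathrm{int}(V)$.

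Now assume $U$ and $V$ are open and fix $a\in U$. Choose $\eta>0$ with $B(a,\eta)\subseteq U$ and $B(\Psi(a),\eta)\subseteq V$. Since an isometric bijection sends metric balls to metric balls, $\Psi\big(B(a,\eta)\big)=B(\Psi(a),\eta)$. I claim $\Psi$ preserves midpoints on $B(a,\eta/4)$. Let $x,y\in B(a,\eta/4)$ and put $z=\tfrac12(x+y)$, $z'=\tfrac12(\Psi(x)+\Psi(y))$, $\rho=\tfrac12\|x-y\|=\tfrac12\|\Psi(x)-\Psi(y)\|<\eta/4$. Run the Mazur--Ulam iteration: $M_1=\{p\in X:\|p-x\|=\|p-y\|=\rho\}$ and $M_{n+1}=\{p\in M_n:\|p-q\|\le\tfrac12\,\mathrm{diam}(M_n)\ \text{for all }q\in M_n\}$, and define $N_n$ analogously from $\Psi(x),\Psi(y)$. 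The point reflection $p\mapsto 2z-p$ fixes $z$ and preserves each $M_n$, so $z\in M_n$ for all $n$; as $\mathrm{diam}(M_{n+1})\le\tfrac12\mathrm{diam}(M_n)$ and $\mathrm{diam}(M_1)\le 2\rho$, we get $\bigcap_nM_n=\{z\}$, and likewise $\bigcap_nN_n=\{z'\}$. Moreover $M_1\subseteq\bar B(z,\rho)\subseteq\bar B(a,\eta/2)\subseteq U$, and since $\Psi(x),\Psi(y)\in B(\Psi(a),\eta/4)$ we get $N_1\subseteq\bar B(z',\rho)\subseteq\bar B(\Psi(a),\eta/2)\subseteq V$; hence all $M_n\subseteq U$ and all $N_n\subseteq V$. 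Because $M_n$ and $N_n$ are defined purely through distances to $x,y$ respectively $\Psi(x),\Psi(y)$, and $\Psi$ is an isometric bijection $U\to V$, it carries $M_n$ bijectively onto $N_n$ for every $n$, so $\Psi(z)=z'$. A continuous midpoint-preserving map on a convex set is affine, so $\Psi$ is an affine isometry on $B(a,\eta/4)$; its linear part extends to a linear isometry $L:X\to Y$ whose range contains the open ball $B(0,\eta/4)$ of $Y$, hence $L$ is onto. Thus there is a bijective affine isometry $T_a:X\to Y$ with $T_a=\Psi$ on $B(a,\eta/4)$.

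It remains to glue these local pieces. If $b\in B(a,\eta_a/4)$, then $B(a,\eta_a/4)$ and $B(b,\eta_b/4)$ meet, so $T_a$ and $T_b$ agree on a non-empty open set and therefore everywhere (two affine maps coinciding on an open set are equal). Hence $a\mapsto T_a$ is locally constant on $U$, and since $U$ is convex, hence connected, it is constant: there is a single bijective affine isometry $T:X\to Y$ with $\Psi=T|_U$. Combined with the reduction step, $T$ is the required extension $\widetilde\Psi$.

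The main obstacle is the first step — that $\Psi$ maps interior points to interior points, equivalently that a metric ball of a normed space cannot be isometric, with its centre corresponding to a point of the ``flat face'', to the intersection of such a ball with a proper supporting half-space of a convex set. This is precisely where the hypothesis ``convex with non-empty interior'' is used beyond mere ``open and connected'', and it needs a genuine argument analyzing the metric near such a boundary point through the supporting functional and the equality case of the triangle inequality (this is, in essence, Mankiewicz's lemma). By contrast, the localized Mazur--Ulam computation and the connectedness gluing are routine once the set-up above is in place. One can also avoid the explicit reduction and fold this point into the gluing: the set of $u\in\mathrm{int}(U)$ near which $\Psi$ is affine is open, and its closedness in $\mathrm{int}(U)$ comes down to the very same supporting-hyperplane fact.
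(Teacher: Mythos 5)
First, note that the paper does not prove this statement at all: it is Mankiewicz's theorem, quoted with references to \cite{M} and \cite[Theorem 14.1]{BL}, so your attempt can only be measured against the known proofs. Your second and third paragraphs (the localized Mazur--Ulam iteration on a small ball, the surjectivity of the linear part, and the gluing of the local affine pieces by connectedness of $U$) are correct and standard. The genuine gap is exactly where you suspect it, in the first step, but the situation is worse than ``needs more detail'': the metric invariant you propose does not work. Your characterization of the centre of a \emph{genuine} ball by the antipode property is fine, but the transfer argument --- ``if $\Psi(u)$ were a boundary point of $V$, a point of the truncated ball lying on the supporting hyperplane cannot admit antipodes'' --- is false, and no choice of witness point can repair it, because a boundary point of a convex body can itself enjoy the antipode property. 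Concretely, take $Y=\ell_\infty^2$, $V=\{(s,t):\,s+t\leqslant 0\}$ and the boundary point $0$ of $V$. For every $p=(s_0,t_0)\in V$ with $\|p\|_\infty=\rho$ there is $p^\star\in V$ with $\|p^\star\|_\infty=\rho$ and $\|p-p^\star\|_\infty=2\rho$: if $t_0=-\rho$ take $p^\star=(-\rho,\rho)$, if $s_0=-\rho$ take $p^\star=(\rho,-\rho)$, and if $p=(\rho,-\rho)$ take $p^\star=(-\rho,\rho)$. Hence every point of $V\cap B(0,r)$ admits an antipode with respect to $0$ inside $V\cap B(0,r)$, although $0\notin\mathrm{int}(V)$; moreover the point $(\rho,-\rho)$ lies on the supporting hyperplane $\{s+t=0\}$ and does admit antipodes, and for a strictly convex $V$ there are no points of the truncated ball on the hyperplane other than the centre anyway. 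So the property ``every point of a small metric ball around $x$ has an antipode with respect to $x$'' does not distinguish interior points from boundary points of a convex body, and your Step 1 collapses.

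Since the whole reduction to the open case rests on $\Psi(\mathrm{int}(U))=\mathrm{int}(V)$, this is a missing (and, historically, the hard) ingredient: Mankiewicz's argument that a surjective isometry between convex bodies, or between open connected sets, is locally affine does not go through such a soft symmetry invariant, but through a finer analysis (in \cite{M} and \cite[Chapter 14]{BL}) of how the isometry behaves on segments and on the boundary, using supporting functionals and the equality case of the triangle inequality in an essential quantitative way. As it stands, your proposal proves the theorem only under the additional hypothesis that $U$ and $V$ are open (or that $\Psi$ is already known to map interior onto interior), which is strictly weaker than the statement needed.
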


\smallskip The following lemma is the natural generalization of \cite[Lemma 4.1]{L}. The proof is very similar.
\begin{lemma}\label{prop1}
Let $K$ be a compact Hausdorff space, and let $f$ and $g$ be two non-zero elements of $B_{C(K)}$. 
Then, the closed balls $\overline B(f,1)$ and  $\overline B(g,1)$ cover $B_{C(K)}$ if, and only if, there exist an isolated point $u$ of $K$ and real numbers $c,d$ such that $f= c \mathds{1}_{\{u\}}$, $g= d\mathds{1}_{\{u\}}$ and $cd <0$.
\end{lemma}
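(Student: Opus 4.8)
The "if" direction should be straightforward: if $f = c\ind_{\{u\}}$ and $g = d\ind_{\{u\}}$ with $cd < 0$, I want to check directly that every $h \in B_{C(K)}$ lies in $\overline B(f,1) \cup \overline B(g,1)$. WLOG $c > 0 > d$. Given $h$ with $\|h\|_\infty \le 1$, the only coordinate where trouble can occur is at $u$: away from $u$, $f$ and $g$ vanish, so $|h(x) - f(x)| = |h(x)| \le 1$ there. At $u$, either $h(u) \ge 0$, in which case $|h(u) - d| = h(u) - d \le 1 - d$... hmm, that need not be $\le 1$. Let me reconsider: I should use that we only need to cover $B_{C(K)}$, and split on the sign of $h(u)$. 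If $h(u) \ge 0$ then $h(u) \le 1 \le c+1$ trivially and $h(u) \ge 0 \ge c - 1$ when $c \le 1$, so $|h(u) - c| \le 1$, giving $h \in \overline B(f,1)$; symmetrically if $h(u) \le 0$ then $h \in \overline B(g,1)$ using $d \in [-1,0)$. This works provided $|c|, |d| \le 1$, which holds since $f, g \in B_{C(K)}$. (Note $u$ isolated is what makes $\ind_{\{u\}}$ continuous, so $f, g$ are genuinely in $C(K)$.)

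For the "only if" direction, suppose $\overline B(f,1) \cup \overline B(g,1) \supseteq B_{C(K)}$. The main work is to extract the point $u$ and the scalar/sign constraints. First I would argue $f$ and $g$ must have "small support" in a suitable sense. A natural test function: for any $x_0 \in K$ at which $f(x_0) \ne 0$ (say $f(x_0) > 0$) I want to probe with a continuous $h$ that agrees with $-\mathrm{sign}(f)$ near a chosen point and is small elsewhere, forcing $h \notin \overline B(f,1)$ hence $h \in \overline B(g,1)$, which then constrains $g$. The cleanest route: show that the (closed) sets $\{|f| \ge \varepsilon\}$ and $\{|g|\ge\varepsilon\}$ interact so tightly that both $f$ and $g$ are supported on a single common isolated point. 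Concretely, if there were two distinct points $u_1, u_2$ with $f(u_i)\neq 0$, or a point where $f \ne 0$ in the closure of a set where $f$ takes values bounded away from $0$ together with some point behaviour of $g$, one builds an $h \in B_{C(K)}$ — using Urysohn to interpolate — lying in neither ball. I would also use that $f \ne 0$ forces $g \ne 0$ and vice versa, and that the supports must coincide; then use a one-point Urysohn argument near that common point to see the support point must be isolated (if it were an accumulation point, nearby continuity of $h$ would give room to escape both balls), and finally a scalar-sign computation at that point $u$: testing with $h = \ind_{\{u\}}$ and $h = -\ind_{\{u\}}$ forces $f(u) = c$, $g(u) = d$ with opposite signs. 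The model for all of this is \cite[Lemma 4.1]{L}, which handles $c = C(\mathbb{N}\cup\{\infty\})$; the generalization mainly requires replacing explicit sequence manipulations by Urysohn-function constructions on a general compact Hausdorff $K$.

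The main obstacle will be the "only if" direction, specifically ruling out the possibility that $f$ (or $g$) is spread over more than one point or supported at a non-isolated point while the two balls still cover $B_{C(K)}$. This is where one must carefully design test functions $h \in B_{C(K)}$: one needs $h$ to be far from $f$ (distance $> 1$) at some point and simultaneously far from $g$ at some point, which requires exploiting either disjointness or mismatched signs of the supports. The delicate case is when $\mathrm{supp}(f)$ and $\mathrm{supp}(g)$ overlap but are not singletons, or overlap at an accumulation point: there, continuity of the competitor $h$ limits how sharply it can differ from $f$ and $g$, and one has to use the full strength of "the union is all of $B_{C(K)}$" by testing against functions of norm exactly $1$ that oscillate between $\pm 1$ on a small clopen/closed neighborhood. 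I expect the argument to proceed by first proving $\|f\|_\infty = \|g\|_\infty = 1$ is *not* needed but that the supports are forced to be a common singleton $\{u\}$, then that $u$ is isolated, then the sign condition $cd < 0$, each step via an explicit Urysohn-type test function.
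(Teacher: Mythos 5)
Your plan is correct and follows essentially the same route as the paper: the easy direction is the same pointwise sign check, and the hard direction rests on the same two-point Tietze/Urysohn test function taking the value $-\mathrm{sgn}\bigl(f(u)\bigr)$ at a point where $f\neq 0$ and $-\mathrm{sgn}\bigl(g(v)\bigr)$ at a second point where $g\neq 0$, followed by the $\pm\ind_{\{u\}}$ sign test for $cd<0$. The ``delicate case'' you anticipate does not actually arise: the paper fixes one $u$ with $f(u)\neq 0$ and concludes $g=d\ind_{\{u\}}$ from that single construction (overlapping or non-singleton supports cause no trouble, since $h$ only needs prescribed values at two distinct points of a normal space), isolation of $u$ then comes for free from the continuity of $g$ rather than from any further test function, and one simply swaps the roles of $f$ and $g$ to finish.
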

\begin{proof}
Suppose that $B_{C(K)} \subset \overline B(f,1) \cup \overline B (g,1)$.\\
Since $f \neq 0$, we may choose $u \in K$ such that $f(u) \neq 0$.\\
Suppose that there exists $v \neq u \in K$ such that $g(v) \neq 0$. By the Tietze extension Theorem, one can find a function $h\in B_{C(K)}$ such that $h(u)=- \mathrm{sgn}\bigl(f(u)\bigr)$ and $h(v)=- \mathrm{sgn}\bigl(g(v)\bigr)$. Then, $\vert h(u)-f(u)\vert >1$ and $\vert h(v)-g(v)\vert >1$; so $h\notin \overline B(f,1)\cup \overline B(g,1)$, which is a contradiction.\\ 
Hence, we have $g(x)=0$, for all $x\neq u$. Since $g\neq 0$, it follows that   $g=d \mathds{1}_{\{u\}}$ for some real number $d \neq 0$. Note that this  implies  that $u$ is an isolated point of $K$ because $g$ is continuous.\\
Now, if we repeat the same argument, we get $f(x) =0$, for all $ x \neq u$.  
Hence, $f=c\mathds{1}_{\{u\}}$ for some real number $c \neq 0$. \\
Finally we have $cd<0$. Indeed, 
if $cd>0$, then the function $- \mathrm{sgn}(c)\mathds{1}_{\{u\}}=- \mathrm{sgn}(d)\mathds{1}_{\{u\}}\in B_{C(K)}$ does not belong to $\overline B (f,1) \cup \overline B (g,1)$, which is a contradiction. 

\medskip
For the converse implication, suppose that there exists an isolated point $u$ of $K$ such that $f= c \mathds{1}_{\{u\}}$, $g= d\mathds{1}_{\{u\}}$ and $cd <0$.\\
Let $h \in B_{C(K)}$ be arbitrary. If $h(u)=0$, then $h \in \overline B(f,1) \cap \overline B(g,1)$. 
If $h(u) \neq 0$, then either $\mathrm{sgn}\bigl(h(u)\bigr)=\mathrm{sgn}(c)$, in which case $h \in \overline B(f,1)$, or $\mathrm{sgn}\bigl(h(u)\bigr)=\mathrm{sgn}(d)$, in which case $h \in \overline B(g,1)$. Hence, we see that $B_{C(K)} \subset \overline B(f,1)\cup \overline B(g,1)$. 
\end{proof}

\medskip
Finally, we will use a few times the following  well known fact. 
\begin{Remark}\label{dense} Let $K$ be a zero-dimensional compact Hausdorff space. If $A\subset\R$ has empty interior, then the set $D=\{ f\in C(K);\; \forall x\in K,\; f(x)\notin A\}$ is dense in $C(K)$.
\end{Remark}
\begin{proof} Since $K$ is zero-dimensional, the set of all $\varphi\in C(K)$ taking only finitely many values is dense in $C(K)$ by the Stone-Weierstrass Theorem. So it is enough to show that any such function $\varphi$ can be approximated by some $f\in D$; which is clear since $\R\setminus A$ is dense in $\R$.
\end{proof}
\section{Proof of Theorem \ref{main}}\label{S3}

In what follows, we consider a compact Hausdorff space $K$ with a finite number of accumulation points. We denote by $K'$ the set of all accumulation points of $K$. We also assume that $K$ is infinite (so $K'\neq\emptyset$), since otherwise $C(K)$ is finite-dimensional and we already know that $B_{C(K)}$ is plastic.

\smallskip Note that $K$ is necessarily zero-dimensional. Moreover, $K\setminus K'$ is dense in $K$, a fact that will be used repeatedly in the proof of Theorem~\ref{main}.

\smallskip For simplicity, we write $B$ instead of $B_{C(K)}$, and we denote by $\overline B(f,r)$ the closed ball with center $f\in C(K)$ and radius $r$.

\smallskip
From now on, we fix a non-expansive bijection $F:B\to B$. Our aim is to show that $F$ is an isometry. At some point we will need to assume that $K$ is metrizable; but for the beginning of the proof this is not necessary.

\subsection{A reduction} The following lemma will allow us to reduce the proof that $F$ is an isometry to a more tractable job, namely proving that a suitably defined non-expansive bijection from $B$ onto itself with some  additional properties is the identity map.
\begin{lemma}\label{l1}
There exist a  homeomorphism $\sigma : K \rightarrow K$ and a continuous function $\alpha :K  \rightarrow  \{-1,1\}$ such that, for every $f \in B$ and $a \in K \setminus K'$, we have the following:
\begin{enumerate}[label={\rm \arabic*)}]
\item if $f(a)=0$, then $F(f)(\sigma(a))=0$, 
\item if $f(a) \alpha (a)<0$, then $F(f)(\sigma(a))\leqslant 0$, 
\item if $f(a) \alpha(a)>0$, then $F(f)(\sigma(a))\geqslant 0$.
\end{enumerate}
\end{lemma}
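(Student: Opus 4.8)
The idea is to exploit the characterization in Lemma~\ref{prop1} of which pairs of unit balls $\overline B(f,1)\cup\overline B(g,1)$ cover $B$. First I would note that for an isolated point $u\in K\setminus K'$, the functions $\ind_{\{u\}}$ and $-\ind_{\{u\}}$ are extreme points of $B$, and by Lemma~\ref{prop1} the balls $\overline B(\ind_{\{u\}},1)$ and $\overline B(-\ind_{\{u\}},1)$ cover $B$. Since $F$ is a non-expansive \emph{bijection}, it maps the covering pair to a covering pair: $\overline B(F(\ind_{\{u\}}),1)\cup\overline B(F(-\ind_{\{u\}}),1)\supset F(B)=B$ (using that $F$ is non-expansive so $F(\overline B(f,1)\cap B)\subset\overline B(F(f),1)$, together with surjectivity). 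By Theorem~\ref{th1}(3), $F(\pm\ind_{\{u\}})=\pm F(\ind_{\{u\}})$ and $F(\ind_{\{u\}})\in\mathrm{ext}(B_{C(K)})$; the extreme points of $B_{C(K)}$ are exactly the functions of modulus $1$ everywhere. Feeding $f=F(\ind_{\{u\}})$, $g=-F(\ind_{\{u\}})$ into Lemma~\ref{prop1} forces $F(\ind_{\{u\}})=\varepsilon(u)\,\ind_{\{v\}}$ for some isolated point $v=\sigma(u)\in K\setminus K'$ and some sign $\varepsilon(u)\in\{-1,1\}$ — but wait, an extreme point has modulus $1$ everywhere, so in fact this must be read carefully: Lemma~\ref{prop1} as stated requires $f,g$ nonzero and concludes $f=c\ind_{\{u\}}$, which is extreme only when $|K|=1$. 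So instead I would apply Lemma~\ref{prop1} to the pair $f_u:=F(t\,\ind_{\{u\}})$, $g_u:=F(-t\,\ind_{\{u\}})$ for a fixed small $t\in(0,1)$; by Theorem~\ref{th1}(3) these equal $\pm F(t\ind_{\{u\}})$ with $F(\pm t\ind_{\{u\}})=\pm t F(\ind_{\{u\}})$, hence $f_u=t\,F(\ind_{\{u\}})$, and Lemma~\ref{prop1} gives $F(\ind_{\{u\}})=\varepsilon(u)\ind_{\{\sigma(u)\}}$ with $\sigma(u)$ isolated. This defines $\sigma:K\setminus K'\to K\setminus K'$ and $\varepsilon:K\setminus K'\to\{-1,1\}$.

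Next I would show $\sigma$ is a bijection of $K\setminus K'$ onto itself. Injectivity: if $\sigma(u)=\sigma(u')$ with $u\neq u'$, then $F(\ind_{\{u\}})$ and $F(\ind_{\{u'\}})$ are both supported at the same point $v$, say $=\varepsilon(u)\ind_{\{v\}}$ and $=\varepsilon(u')\ind_{\{v\}}$; if the signs agree these are equal, contradicting injectivity of $F$ on extreme points (Theorem~\ref{th1}(3) gives $F^{-1}$ injective on extreme points too); if the signs disagree, then $\|\ind_{\{u\}}-\ind_{\{u'\}}\|=1$ but $\|F(\ind_{\{u\}})-F(\ind_{\{u'\}})\|=\|2\ind_{\{v\}}\|=2>1$, contradicting non-expansiveness. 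Surjectivity: apply the same reasoning to $F^{-1}$, which by Theorem~\ref{th1}(3) also maps $\pm\ind_{\{v\}}$ to $\pm\ind_{\{u\}}$-type extreme points, and deduce $\sigma$ is onto. Then I would check that $\sigma$ and $\varepsilon$, \emph{a priori} defined only on the dense set $K\setminus K'$ of isolated points, extend to a homeomorphism $\sigma:K\to K$ and a continuous $\alpha:K\to\{-1,1\}$: continuity of $\alpha$ is automatic on the isolated points, and to extend both to $K'$ one uses that $F$ is non-expansive together with a point-separation / convergence argument — if $u_n\to a\in K'$ in $K\setminus K'$ then the supports $\sigma(u_n)$ must accumulate (otherwise one builds functions witnessing a violation of non-expansiveness, much as in the injectivity argument), and since $K'$ is finite this pins down $\sigma(a)$; this is essentially the argument of \cite[Lemma~4.1 \& its consequences]{L}.

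Finally, conclusions 1)–3) of the lemma: fix $a\in K\setminus K'$ and $f\in B$. Statement 1), $f(a)=0\Rightarrow F(f)(\sigma(a))=0$: this should follow by comparing $f$ with translates/perturbations involving $\ind_{\{a\}}$. Concretely, if $f(a)=0$ then $f\pm t\ind_{\{a\}}\in B$ for small $t>0$ and $\|(f+t\ind_{\{a\}})-(f-t\ind_{\{a\}})\|=2t$; applying non-expansiveness to $F$ and to $F^{-1}$ around these points, combined with the fact established above that $F(\pm t\ind_{\{a\}})=\pm t\varepsilon(a)\ind_{\{\sigma(a)\}}$, forces $F(f)(\sigma(a))$ to lie in an interval shrinking to $\{0\}$ as $t\to0$; hence it is $0$. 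For 2) and 3): the sign of $F(f)(\sigma(a))$ is controlled by which of the two covering balls $\overline B(t\varepsilon(a)\ind_{\{\sigma(a)\}},1)$, $\overline B(-t\varepsilon(a)\ind_{\{\sigma(a)\}},1)$ the image $F(f)$ can possibly lie in; if $f(a)\alpha(a)<0$ then $f$ is ``on the $-\varepsilon(a)$ side'', i.e. $\|f - (-s)\ind_{\{a\}}\|\le 1$ fails to improve while $\|f-s\ind_{\{a\}}\|$ does, for suitable $s$, and pushing this through $F$ (again using non-expansiveness in both directions) yields $F(f)(\sigma(a))\le 0$; symmetrically for 3). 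The main obstacle I expect is not statements 1)–3) but the \emph{extension step}: showing that $\sigma$ and $\alpha$, defined combinatorially on the isolated points, genuinely extend continuously across the (finitely many) accumulation points and that the resulting $\sigma$ is a homeomorphism — this is where the hypothesis on $K'$ being finite, and the delicate non-expansiveness bookkeeping of \cite{L}, really get used.
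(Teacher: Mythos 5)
Your plan breaks down at its very first step, and the damage propagates through everything that follows. Theorem~\ref{th1}(3) concerns $F^{-1}$ at \emph{extreme points} of the ball; $t\ind_{\{u\}}$ is not an extreme point of $B_{C(K)}$ (extreme points are the functions of modulus $1$ everywhere), so neither ``$F(\pm t\ind_{\{u\}})=\pm tF(\ind_{\{u\}})$'' nor ``$F(\ind_{\{u\}})\in\mathrm{ext}(B)$'' is available, and no homogeneity of $F$ along these rays can be asserted. What the covering argument with Lemma~\ref{prop1} actually gives (applied to the pairs $(\ind_{\{a\}},-\ind_{\{a\}})$ and then $(t\ind_{\{a\}},-\ind_{\{a\}})$, using that $F(0)=0$ and $F$ is injective so the images are non-zero) is only $F(t\ind_{\{a\}})=c_a(t)\ind_{\{\sigma_0(a)\}}$ with $c_a$ continuous, strictly monotone, $c_a(0)=0$ — there is no reason at this stage to have $|c_a(\pm1)|=1$. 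Your normalization $F(\ind_{\{u\}})=\varepsilon(u)\ind_{\{\sigma(u)\}}$ with $\varepsilon(u)=\pm1$ is therefore unjustified, and it is exactly what your later steps lean on: the injectivity argument collapses (if $F(\ind_{\{u\}})=c\,\ind_{\{v\}}$ and $F(\ind_{\{u'\}})=c'\ind_{\{v\}}$ with $c\neq c'$ of opposite signs, the distance $|c-c'|$ can perfectly well be $\leqslant 1$, so no contradiction with non-expansiveness), and the sign control in 2)–3) via the balls $\overline B(F(\pm\ind_{\{a\}}),1)$ gives nothing unless the coefficients have modulus $1$. A second, independent error is that you repeatedly use ``non-expansiveness of $F^{-1}$'' (for surjectivity of $\sigma$, and in the arguments for 1)–3)); $F^{-1}$ is \emph{not} known to be non-expansive — that is essentially what the whole theorem is trying to establish — so ``apply the same reasoning to $F^{-1}$'' is not permissible.

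The missing idea, which the paper's proof is built on, is to set $\alpha:=F^{-1}(\ind)$ (an extreme point by Theorem~\ref{th1}(3), hence unimodular and automatically continuous) and to prove the key identity: for every extreme point $g$ of $B$ and $f=F^{-1}(g)$, one has $g(\sigma_0(a))=\alpha(a)f(a)$ for all $a\in K\setminus K'$. This is obtained by a legitimate use of Theorem~\ref{th1}(3) on the extreme point $g$ (giving $F(f(a)f)=f(a)g$) combined with $\Vert \ind_{\{a\}}-f(a)f\Vert=1$ and non-expansiveness of $F$. That identity, not ball-covering alone, is what yields surjectivity of $\sigma_0$ (test with the extreme point equal to $-1$ at a missed point and $1$ elsewhere, contradicting injectivity of $F^{-1}$ as a map), the continuous extension of $\sigma_0$ to $K$ (uniqueness of cluster points, via an extreme point which is $+1$ on a clopen set separating two candidate limits), and items 1)–3) themselves — which the paper first proves for $f$ in the dense set of functions non-vanishing on $K'$, using extreme points $\psi,\phi$ tailored to the sign pattern of $f$, and then extends to all $f\in B$ by density and continuity of $F$. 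Your shrinking-interval argument for 1) does not work without the $\pm1$ normalization, and your extension step also misplaces the hypothesis: finiteness of $K'$ is used only to get injectivity of the extended map $\sigma$ (it maps $K\setminus K'$ bijectively onto itself and $K'$ into the finite set $K'$), not to ``pin down'' the limit of the supports.
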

\begin{proof} As suggested above, in this proof we will not assume that the space $K$ is metrizable; so we will use nets rather than sequences.

\smallskip 
We will first define a map $\sigma_0:K\setminus K'\rightarrow K \minus K'$ and a continuous function $\alpha:K \to \{-1,1\}$, then show that $\sigma_0$ can be extended to a homeomorphism $\sigma :K\to K$, and finally prove 1), 2) and 3).

\begin{step1} For any $a\in K\setminus K'$, there exist a point $\sigma_0(a)\in K\setminus K'$ and a continuous and strictly monotonic function $c_a:[-1,1]\to [-1,1]$ with $c_a(0)=0$ $($so $c_a(-1)c_a(1)<0)$ such that
\[F(t\ind_{\{ a\}})=c_a(t)\ind_{\{ \sigma_0(a)\}} \qquad\hbox{for all}\;\; t \in [-1,1].\]
\end{step1}
\begin{proof} Let $a \in K \minus K'$.\\ Lemma~\ref{prop1} implies that $B \subset \overline B (\ind_{\{a\}},1) \cup \overline B (-\ind_{\{a\}},1)$. As $F$ is non-expansive and surjective, it follows that $B \subset \overline B \bigl(F(\ind_{\{a\}}),1\bigr) \cup \overline B \bigl(F(-\ind_{\{a\}}),1\bigr)$. Moreover, 
Theorem~\ref{th1} (item~1) implies that $F(\ind_{\{a\}})$ and $F(-\ind_{\{a\}})$ are non-zero. By Lemma~\ref{prop1}, one can find $\sigma_0(a)\in K\setminus K'$ such that  $F(\ind_{\{ a\}})$ and $F(-\ind_{\{ a\}})$ are non-zero multiples of $\ind_{\{ \sigma_0(a)\}}$ and $F(\ind_{\{ a\}})(\sigma_0(a))\, F(- \ind_{\{ a\}})(\sigma_0(a))<0$.

\smallskip\noindent
Let us show that for any $t\in [-1,1]$, we have $F(t\ind_{\{ a\}})=c_a(t)\ind_{\{ \sigma_0(a)\}}$ for some real number $c_a(t)\in [-1,1]$. \\Let $t \in [-1,1]$ and let $f=t \ind_{\{a\}}$. If $t=0$, then $f=0$, and Theorem~\ref{th1} implies that $F(f)=0$.
If $t >0$, then Lemma~\ref{prop1} implies that $B \subset \overline B(f,1) \cup \overline B (-\ind _{\{a\}},1)$. As $F$ is non-expansive and surjective, it follows that $B \subset \overline B\bigl(F(f),1\bigr) \cup \overline B \bigl(F(-\ind _{\{a\}}),1\bigr)$. Since $F(f)$ and $F(-\ind _{\{a\}})$ are non-zero by Theorem~\ref{th1} (item~1), Lemma~\ref{prop1} and  the definition of $\sigma_0(a)$ imply that $F(f)$ is a non-zero multiple of 
$\ind_{\{ \sigma_0(a)\}}$. The case $t < 0$ is analogous. So we have shown that $F(t\ind_{\{a\}})=c_a(t) \ind_{\{\sigma_0(a)\}}$ for some real number $c_a(t)$, and $c_a(t)\in [-1,1]$ since $F(t\ind_{\{a\}})\in B$.\\
We have $c_a(t)= F(t\ind_{\{a\}})(\sigma_0(a))$ for all $t\in [-1,1]$. Since $F$ is continuous, this shows that the function $c_a:[-1,1]\to [-1,1]$ is continuous. Moreover, $c_a$ is also injective because $F$ is injective. Hence, $c_a$ is  strictly monotonic. 
\end{proof}

\smallskip Let us now define the continuous function $\alpha:= F^{-1}(\ind)$, where $\ind$ is the constant function.\\
Since $\ind$ is an extreme point of $B$, Theorem~\ref{th1} (item~3) implies that $\alpha$ is also an extreme point of $B$, so $\vert\alpha\vert\equiv 1$. \\

\smallskip The following fact will be used several times in the remaining of the proof.

\begin{fact}\label{keyformula} If $g$ is any extreme point of $B$ and $f=F^{-1}(g)$, then 
\[ g(\sigma_0(a))=\alpha(a)f(a)\qquad\hbox{for all}\; a \in K\setminus K'. 
\]
\end{fact}
\begin{proof}
Let $a \in K \minus K'$. Since $g$ is an extreme point of $B$, Theorem~\ref{th1} (item~3) implies that $f$ is an extreme point of $B$, \mbox{\it i.e.} $\vert g \vert \equiv \vert f\vert \equiv 1 $, and it also implies that $F(f(a)f)=f(a)g$.\\
We have $\Vert \ind _{\{a\}}-f(a)f\Vert =1$. Since $F$ is non-expansive and $F(f(a)f)=f(a)g$, it follows that $\Vert F(\ind _{\{a\}})-f(a)g\Vert \leqslant 1$. This implies in particular that $f(a)g(\sigma_0(a))$ and $F(\ind_{\{a\}})(\sigma_0(a))$ have the same sign because $\vert f(a)g(\sigma_0(a))\vert =1$ and $F(\ind_{\{a\}})(\sigma_0(a)) \neq 0$, so we get $f(a)g(\sigma_0(a))= \mathrm{sgn}\bigl(F(\ind_{\{a\}})(\sigma_0(a))\bigr)$.\\
Applying that to $g=\ind$ and (hence) $f=\alpha$, we see that $\alpha (a)=\mathrm{sgn}\bigl(F(\ind_{\{a\}})(\sigma_0(a))\bigr)$.\\
Altogether, for an arbitrary extreme point $g$ and $f=F^{-1}(g)$, we obtain $f(a)g(\sigma_0(a))=\alpha (a)$, \mbox{\it i.e.} $g(\sigma_0(a))=\alpha (a)f(a)$. 
\end{proof}
\smallskip
\begin{step2} The map $\sigma_0:K\setminus K'\to K\setminus K'$ is bijective.
\end{step2}
\begin{proof} Let us first show that $\sigma_0$ is injective. 
Let $a,b \in K \minus K'$ be such that $\sigma_0(a)=\sigma_0(b)$. With the notation of Step~1, since $c_a$ and $c_b$ are continuous and since $c_a(-1)c_a(1)<0$ and $c_b(-1)c_b(1)<0$, it follows that $c_a([-1,1])=[u_a,v_a]$ and $c_b([-1,1])=[u_b,v_b]$, where $u_a<0<v_a$ and $u_b<0<v_b$. Hence, there exist $t_1, t_2 \in [-1,1]\minus \{0\}$ such that $c_a(t_1)=c_b(t_2)$. And since $\sigma_0(a)=\sigma_0(b)$, we get $F(t_1\ind_{\{a\}})=F(t_2\ind_{\{b\}})$, and this implies  that $t_1\ind_{\{a\}}=t_2\ind_{\{b\}}$ since $F$ is injective. Therefore, $t_1=t_2$ and $a=b$ since $t_1$ and $t_2$ are non-zero.

\smallskip
Now, let us show that $\sigma_0$ is surjective. Towards a contradiction, suppose that there exists $y \in K\setminus K'$ such that $y \neq \sigma_0(x)$ for all $x \in K \setminus K'$. \\
Let $h$ be the following extreme point of $B$: 
\[
h(x)= \begin{cases} 1 &\text{ if } x \neq y,\\
-1 &\text{ if } x=y.
\end{cases}
\]
By Fact~\ref{keyformula},  we have for every $a\in K\setminus K'$: 
\[ F^{-1}(h)(a)=\alpha(a)h(\sigma_0(a))=\alpha(a)\qquad \hbox{because $\sigma_0(a) \neq y$.}\]
 
 \noindent
Since $K \minus K'$ is dense in $K$, we get $F^{-1}(h)= \alpha =F^{-1}(\ind)$. Hence, $h= \ind$, which is a contradiction.
\end{proof}

\smallskip
\begin{step3} The map $\sigma_0:K\setminus K'\to K\minus K'$ extends to a continuous map $\sigma :K\to K$.
\end{step3}
\begin{proof} Since $K\setminus K'$ is dense in $K$, it is enough to prove the following: if $z\in K'$ and if $(a_d)_{d\in D}$ is any net in $K\setminus K'$ such that $a_d \longrightarrow z$, then the net $\bigl(\sigma_0(a_d)\bigr)$ has at most one cluster point. Indeed, since $K$ is compact we may then define $\sigma(z)=\lim \sigma_0(a_d)$ for any net in $K\setminus K'$ such that $a_d \longrightarrow z$ (the limit will not depend on $(a_d)$), and the map $\sigma :K\to K$ will be continuous. \\So let us fix $z \in K'$ and a net $(a_d)_{d\in D}\subset K\setminus K'$ such that  $a_d \longrightarrow z$.\\
Suppose that the net $\bigl(\sigma_0(a_d)\bigr)$ admits two different cluster points $u$ and $v$, \mbox{\it i.e.} there exist two subnets $(a_{d_i})_{i\in I}$ and $(a_{d_j})_{j\in J}$ of $(a_d)$ such that 
\[ \sigma_0(a_{d_i})\longrightarrow u\qquad{\rm and}\qquad \sigma_0(a_{d_j})\longrightarrow v.\]
Since $K$ is zero-dimensional, one can find a clopen neighbourhood $U$ of $u$ such that $v\notin U$. Let $g$ be the following extreme point of $B$:
\[
g(x)= \begin{cases} 1 &\text{ if } x \in U,\\
-1 &\text{ otherwise}.
\end{cases}
\]

\noindent 
By the definition of $g$, we have
\[ g(\sigma_0(a_{d_i}))=1\quad{\rm and}\quad g(\sigma_0(a_{d_j}))=-1 \qquad \hbox{for all large enough $i,j$}.\]

\noindent
Denoting $F^{-1}(g)$ by $f$, we obtain from Fact~\ref{keyformula} that $f(a_{d_i})=\alpha(a_{d_i})\;{\rm and}\; f(a_{d_j})=- \alpha(a_{d_j})$ for all large enough $i,j$. Since $f$ and $\alpha$ are continuous and $a_d \longrightarrow z$, it follows that $\alpha(z)=f(z)=-\alpha(z)$, which is a contradiction since $\vert \alpha(z)\vert=1$. Thus, we have shown that $\sigma_0$ extends to a continuous map $\sigma :K\to K$.
\end{proof}

\smallskip Now, let us state the following consequence of Fact~\ref{keyformula}.

\begin{fact}\label{keyformulabiz} If $g$ is any extreme point of $B$, then 
\[ F^{-1}(g)(x)= \alpha(x)g(\sigma(x))\qquad\hbox{for all}\; x \in K. 
\]
\end{fact}
\begin{proof}
Fact~\ref{keyformula} says that $F^{-1}(g)(x)=\alpha(x)g(\sigma(x))$, for all $x \in K \minus K'$, and this completes the proof since $\alpha$ and $\sigma$ are continuous and $K \minus K'$ is dense in $K$.
\end{proof}

\smallskip
\begin{step4} The map $\sigma:K\to K$ is a homeomorphism.
\end{step4}
\begin{proof}The map $\sigma:K\to K$ is surjective because $\sigma(K)$ is a compact set containing $\sigma_0(K\setminus K')=K\setminus K'$ and  $K\setminus K'$ is dense in $K$.

\smallskip
It remains to prove that $ \sigma$ is injective.\\
First, we observe that  $\sigma(K') \subset K'$. Indeed, let $b \in K'$. Since $K\setminus K'$ is dense in $K$, any neighbourhood of $b$ contains infinitely points of $K\setminus K'$. Since $\sigma$ is continuous and $\sigma_{| K\setminus K'}=\sigma_0$ is injective, it follows that any neighbourhood of $\sigma(b)$ contains infinitely points of $K$; and hence $\sigma(b)\in K'$.\\
So the map $\sigma :K\to K$ is surjective, it maps $K\setminus K'$ bijectively onto itself by Step~2, and $\sigma(K')\subset K'$. Since $K'$ is a finite set, it follows that $\sigma$ is injective.

\smallskip
Therefore, $\sigma : K \rightarrow K $ is a continuous bijection, hence a homeomorphism since $K$ is compact. 
\end{proof}

\smallskip
\begin{step5} {\rm 1), 2)} and {\rm 3)} hold true for every $f \in D:= \{g \in B ;\; \forall z \in K',\ g(z) \neq 0 \}$.
\end{step5}
\begin{proof}
Let $f \in D$ and let 
\[ S=\{x \in K ;\;  f(x)\neq 0 \}.\]
Since $f\in D$, we see that $S$ is an open set containing $K'$, and hence a clopen subset of $K$ (with finite complement). \\
Since $\sigma$ is a homeomorphism and $f \in D$, we may define the following extreme point of $B$: 
\[ \psi(\sigma(x))= \begin{cases}
  \alpha(x)\, \mathrm{sgn}\bigl(f(x)\bigr) &\text{if } x \in S , \\1 &\text{otherwise}. 
   \end{cases}\]
By Fact~\ref{keyformulabiz}, we have  
\[ F^{-1}(\psi)(x)= \begin{cases}
  \mathrm{sgn}\bigl(f(x)\bigr) &\text{if } x \in S , \\ \alpha(x) &\text{otherwise}. 
   \end{cases}\] 
Hence, $\Vert F^{-1}(\psi)-f\Vert \leqslant 1$. Since $F$ is non-expansive, it follows that $\Vert \psi -F(f)\Vert \leqslant 1$, and this implies 2) and 3). 

\smallskip
To prove 1), consider $y \in K\setminus K'$ such that $f(y)=0$ (then $y \notin S$). \\
We know that $\Vert \psi -F(f)\Vert \leqslant 1$, so $F(f)(\sigma(y))\geqslant 0$ since $\psi(\sigma(y))=1$. \\
Now, let us define the following extreme point of $B$: 
\[ \phi(x)=\begin{cases}
    \psi(x) &\text{if }x \neq \sigma(y), \\  -1 &\text{if }x =\sigma(y).
    \end{cases}
    \]
We have, as before, that  $\Vert \phi -F(f)\Vert \leqslant 1$, which implies that $F(f)(\sigma(y))\leqslant 0$. \\
Therefore, $F(f)(\sigma(y))=0$.
\end{proof}

\smallskip
We have just proved 1), 2) and 3) for all $f\in D$. Since $D$ is dense in $B$ by Remark~\ref{dense} and $F$ is continuous, it follows immediately that 2) and 3) hold true for every $f\in B$. Finally, if $f\in B$ and if $f(a)=0$ for some $a\in K\setminus K'$, then $f$ can be approximated by elements $g$ of $D$ such that $g(a)=0$. So 1) holds true as well for any $f\in B$.
\end{proof}

\smallskip
\begin{remark} In the proof of Lemma~\ref{l1}, the fact that $K'$ is a finite set was used just once, at the end of the proof of Step~4, to ensure that the map $\sigma$ is injective; and this assumption will not be needed anywhere else in the proof of Theorem~\ref{main}.

However, note that if $K$ has infinitely many accumulation points, then it may be possible to construct a surjective continuous map $\sigma :K\to K$ such that $\sigma$ maps bijectively $K\setminus K'$ onto itself, $\sigma(K')=K'$ and yet $\sigma$ is not injective. Consider for example the simplest compact space $K$ with infinitely many accumulation points, \mbox{\it i.e.} $K=\{ x_{n,m};\; n,m\in\N\}\cup\{ x_{n,\infty};\; n\in\N\} \cup \{x_{\infty,\infty}\}$ where all points are distinct, $x_{n,m}\longrightarrow x_{n,\infty}$ as $m\longrightarrow\infty$ for each $n\in\N$, and $x_{n,\infty}\longrightarrow x_{\infty,\infty}$ as $n\longrightarrow\infty$. Define $\sigma:K\rightarrow K$ as follows: 
\begin{enumerate}
\item[-] $\sigma(x_{\infty,\infty})=x_{\infty,\infty}$,
\item[-] $\sigma(x_{n,\infty})=x_{(n-1),\infty}$ for all $n\geqslant 2$, and $\sigma(x_{1,\infty})=x_{1,\infty}$,
\item[-] $\sigma(x_{n,m})=x_{(n-1),m}$ for all $n\geqslant 3$ and $m\in\N$,
\item[-] $\sigma(x_{2,m})=x_{1, (2m-1)}$ and $\sigma(x_{1,m})=x_{1,2m}$ for all $m\in\N$.
\end{enumerate}

\noindent Then, $\sigma$ has the properties stated above.
\end{remark}

\medskip
With the notation of Lemma~\ref{l1}, let us consider the linear isometric bijection $\mathcal{J}: C(K) \rightarrow C(K)$ defined as follows: for every $f\in C(K)$ and $x\in K$,
\[ \mathcal{J}f( \sigma(x))= \alpha (x)f(x).
\]

\noindent In other words,
\[ \mathcal J= M_{\alpha\circ\sigma^{-1}} C_{\sigma^{-1}},\]
where $M_{\alpha\circ\sigma^{-1}}$ is the isometric multiplication operator defined by  the unimodular function $\alpha\circ\sigma^{-1}$ and $C_{\sigma^{-1}}$ is the isometric composition operator defined by the homeomorphism $\sigma^{-1}$.

\medskip
Considering $\mathcal J_{| B}$ as an isometric bijection of $B$ onto itself, the proof of Theorem~\ref{main} will be complete if we can show that $\mathcal J_{| B}=F$. In other words, setting 
\[ \Phi = (\mathcal J_{| B})^{-1}\circ F,\]
our aim is now to show that $\Phi =Id_B$. Note that by definition of $\Phi $, we have $\Phi=M_\alpha C_\sigma\circ F$, \mbox{\it i.e.}
\begin{equation}\label{Ftilde} \Phi (f)(x)=\alpha(x) F(f)(\sigma(x))\end{equation}
for every $f\in B$ and $x\in K$. The next lemma summarizes all what we need to know about $\Phi $ in what follows.

\begin{lemma}\label{l2}
The map $\Phi $ is a non-expansive bijection of $B$ onto itself. Moreover, for any $f, g \in B$ and every $x \in K\setminus K'$, the following facts hold true.
\begin{enumerate}[label={\rm \arabic*)}]
\item\label{1} If $f(x)=0$, then $\Phi (f)(x)=0$.
\item\label{2} If $f(x)<0$, then $\Phi (f)(x) \leqslant 0$.
\item\label{3} If $f(x)>0$, then $\Phi (f)(x) \geqslant 0$. 
\item\label{4} If $g(x)<0$, then $\Phi ^{-1}(g)(x) <0$.
\item\label{5}  If $g(x)>0$, then $\Phi ^{-1}(g)(x) >0$.
\item\label{6} If $g$ is an extreme point of $B$, then $\Phi(g)=g$. 
\end{enumerate}
\end{lemma}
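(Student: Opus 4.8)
The plan is to deduce all six assertions from what we already have, namely Lemma~\ref{l1}, Fact~\ref{keyformulabiz}, and the defining identity~\eqref{Ftilde} of $\Phi$; no new idea is needed. That $\Phi$ is a non-expansive bijection of $B$ onto itself is immediate: $\mathcal J$ is a linear isometric bijection of $C(K)$, so its restriction $\mathcal J_{|B}$, and hence $(\mathcal J_{|B})^{-1}$, is an isometric bijection of $B$ onto itself; composing it with the non-expansive bijection $F$ shows that $\Phi=(\mathcal J_{|B})^{-1}\circ F$ is a non-expansive bijection of $B$.

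For items 1), 2) and 3), I would fix $f\in B$ and $x\in K\setminus K'$, write $\Phi(f)(x)=\alpha(x)F(f)(\sigma(x))$ using~\eqref{Ftilde}, and split into the cases $\alpha(x)=1$ and $\alpha(x)=-1$. If $f(x)=0$, then Lemma~\ref{l1}.1) gives $F(f)(\sigma(x))=0$, so $\Phi(f)(x)=0$. If $f(x)<0$, then when $\alpha(x)=1$ we have $f(x)\alpha(x)<0$, so Lemma~\ref{l1}.2) gives $F(f)(\sigma(x))\leqslant0$ and hence $\Phi(f)(x)=F(f)(\sigma(x))\leqslant0$; when $\alpha(x)=-1$ we have $f(x)\alpha(x)>0$, so Lemma~\ref{l1}.3) gives $F(f)(\sigma(x))\geqslant0$ and hence $\Phi(f)(x)=-F(f)(\sigma(x))\leqslant0$. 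The case $f(x)>0$ is symmetric. (Concisely: Lemma~\ref{l1} says the sign of $F(f)(\sigma(x))$ is that of $\alpha(x)f(x)$, so the sign of $\Phi(f)(x)=\alpha(x)F(f)(\sigma(x))$ is that of $f(x)$.)

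Items 4) and 5) then follow by applying 1)--3) to the preimage: given $g\in B$ and $x\in K\setminus K'$ with $g(x)<0$, set $h:=\Phi^{-1}(g)\in B$, so $\Phi(h)=g$; if $h(x)=0$ then 1) forces $g(x)=0$, and if $h(x)>0$ then 3) forces $g(x)\geqslant0$, both impossible, so $h(x)<0$, i.e.\ $\Phi^{-1}(g)(x)<0$; item 5) is symmetric. For item 6), let $g$ be an extreme point of $B$, i.e.\ $|g|\equiv1$. Then $\mathcal Jg$ is again an extreme point of $B$, being the image of an extreme point under the surjective linear isometry $\mathcal J$ (equivalently, because $|\mathcal Jg|\equiv1$). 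Applying Fact~\ref{keyformulabiz} to the extreme point $\mathcal Jg$, and using $(\mathcal Jg)(\sigma(x))=\alpha(x)g(x)$, we obtain for every $x\in K$
\[
\Phi^{-1}(g)(x)=F^{-1}(\mathcal Jg)(x)=\alpha(x)\,(\mathcal Jg)(\sigma(x))=\alpha(x)^2 g(x)=g(x),
\]
so $\Phi^{-1}(g)=g$, and therefore $\Phi(g)=g$ since $\Phi$ is a bijection.

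I do not expect a real obstacle in this lemma; the only place that calls for a little care is item 6), where the argument should be routed through the extreme point $\mathcal Jg$ (to which Fact~\ref{keyformulabiz} directly applies) rather than through $F(g)$, since nothing proved so far guarantees that $F$ itself sends extreme points to extreme points.
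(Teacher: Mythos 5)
Your proposal is correct and follows essentially the same route as the paper: items 1)--3) from Lemma~\ref{l1} via the identity~\eqref{Ftilde}, items 4)--5) by applying 1) and 3) to $\Phi^{-1}(g)$, and item 6) by noting that $\mathcal Jg$ is an extreme point and applying Fact~\ref{keyformulabiz} to it, exactly as in the paper. Your closing remark about routing through $\mathcal Jg$ rather than $F(g)$ is precisely the point of the paper's argument.
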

\begin{proof} Properties 1), 2) and 3) are immediate consequences of (\ref{Ftilde}) and Lemma~\ref{l1}. If $g(x)<0$ then, applying \ref{1} and \ref{3} to $f:=\Phi ^{-1}(g)$, we see that we cannot have $\Phi ^{-1}(g)(x)\geqslant 0$; which proves \ref{4}. The proof of \ref{5} is the same.\\
Let us now prove \ref{6}. Assume that $g$ is an extreme point of $B$. With the notation of Lemma~\ref{l1}, for every $x \in K$, we have 
\begin{equation}\label{Jg} \mathcal{J}g(\sigma(x))=\alpha(x)g(x). \end{equation}
Since $g$ is an extreme point of $B$, (\ref{Jg}) implies that $\mathcal{J}g$ is also an extreme point of $B$. Hence, by Fact~\ref{keyformulabiz}, we get $F^{-1}(\mathcal{J}g)(x)=g(x)$, for every $x \in K$. This implies that $\Phi^{-1}(g)=g$, which is equivalent to $\Phi(g)=g$.
\end{proof}

\medskip To summarize this subsection: we have defined a non-expansive bijection $\Phi:B\to B$ satisfying the properties stated in Lemma~\ref{l2}, and Theorem~\ref{main} will be proved if we can show that $\Phi=Id_B$.

\smallskip From now on, we will assume that \emph{$K$ is metrizable}. This assumption will be needed to ensure that the closed set $K'$ is a ``zero set'', \mbox{\it i.e.} there is a continuous function $\chi:K\to \R$ such that $K'=\{ \chi =0\}$. And this will be used only in the proof of Fact~\ref{l12} below.

\smallskip On the other hand, the fact that $K'$ is a finite set will not be needed in the remaining of the proof.

\subsection{The main lemma} The fact that $\Phi =Id_B$ will follow  easily from Lemma~\ref{l17} below. Let us first introduce some notation. Given $f \in B$, we define 
\begin{center}
$u_{0,f}= \min \,\{\vert f(z)\vert ;\;  z \in K'\}$
\end{center}
and for any $n\in\Z_{+}$, we set
\[ u_{n,f}= \dfrac{n+u_{0,f}}{n+1}\cdot\]

\medskip

\begin{lemma}\label{l17}
For each $n \in \Z_+$, we have the following: for each $f \in B$ such that $u_{0,f} \notin \{0,1\}$ and $x \in K\setminus K'$,
\begin{enumerate}[label={\rm \arabic*)}]
\item if $\vert f(x)\vert < u_{n,f}$, then $\Phi (f)(x)=f(x)$, 
\item if $f(x) \geqslant u_{n,f}$, then $\Phi (f)(x) \in [u_{n,f},f(x)]$, 
\item if $f(x) \leqslant -u_{n,f}$, then $\Phi (f)(x) \in [f(x),-u_{n,f}]$. 
\end{enumerate}
\end{lemma}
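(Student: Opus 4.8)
The plan is to prove the three statements simultaneously by induction on $n$, since they are clearly interrelated: knowing the conclusions at level $n$ for \emph{all} admissible $f$ should feed into the argument at level $n+1$. First I would treat the base case $n=0$. Here $u_{0,f}=\min\{|f(z)|;\ z\in K'\}$, and the hypothesis $u_{0,f}\notin\{0,1\}$ means $0<u_{0,f}<1$. For statement~1) at $n=0$ I would start from a point $x\in K\setminus K'$ with $|f(x)|<u_{0,f}$ and try to build a suitable extreme point $g$ of $B$ (or a function close to one) so that $\|g-f\|\le 1$ forces $\Phi(f)(x)$ to agree with $f(x)$; the idea, borrowed from Leo's argument, is that since $\Phi$ fixes all extreme points (Lemma~\ref{l2}\ref{6}) and is non-expansive, comparing $f$ against carefully chosen extreme points pins down the value $\Phi(f)(x)$ coordinatewise. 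The sign conditions from Lemma~\ref{l2}\ref{1}--\ref{5} should handle the cases $f(x)\ge u_{n,f}$ and $f(x)\le -u_{n,f}$, giving one side of the desired interval for free, while the other side (that $\Phi(f)(x)$ does not exceed $f(x)$, resp. does not go below $f(x)$) should come again from a ball-inclusion/non-expansiveness comparison.

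For the inductive step I would assume the lemma holds at level $n$ and derive it at level $n+1$, noting that $u_{n+1,f}$ is strictly larger than $u_{n,f}$ and converges to $1$ as $n\to\infty$, so the thresholds are being pushed outward. The key mechanism should be: given $f$ with $|f(x)|<u_{n+1,f}$, one perturbs or rescales $f$ to produce a new admissible function $\tilde f$ whose value at the relevant points, combined with the induction hypothesis applied to $\tilde f$, controls $\Phi(f)(x)$. A natural device is to consider functions of the form obtained by truncating $f$ at a level between $u_{n,f}$ and $u_{n+1,f}$, or by taking convex combinations with extreme points, and then to use that $\Phi$ is known on the truncated/combined function together with non-expansiveness of $\Phi$ and $\Phi^{-1}$. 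I expect this is where the function $\chi$ with $K'=\{\chi=0\}$ (whose existence needs metrizability) enters, via Fact~\ref{l12} referenced in the excerpt: presumably it is used to manufacture, in a continuous way, the perturbations that separate behaviour near $K'$ from behaviour away from $K'$, so that the extreme points one constructs are genuinely continuous on $K$.

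The main obstacle, I expect, is the bookkeeping in the inductive step: one must simultaneously keep the perturbed function inside $B$, keep its own parameter $u_{0,\tilde f}$ away from $\{0,1\}$ so the induction hypothesis applies to it, and ensure that the chain of inequalities coming from $\|\Phi(f)-\Phi(\tilde f)\|\le\|f-\tilde f\|$ closes up to give exactly the interval $[u_{n+1,f},f(x)]$ rather than something slightly larger. Getting the endpoints sharp — in particular recovering statement~1), the equality $\Phi(f)(x)=f(x)$, rather than merely a two-sided bound — will require a delicate choice of the comparison functions, probably exploiting that extreme points take only values $\pm 1$ and that $K$ is zero-dimensional (Remark~\ref{dense}) so that functions avoiding a prescribed small set of values are dense. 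A secondary subtlety is handling the coordinates $z\in K'$ themselves: the statements only assert things for $x\in K\setminus K'$, but the perturbation arguments will produce functions whose values on $K'$ must be tracked, and one will want to invoke density of $K\setminus K'$ in $K$ together with continuity of $\Phi$ at the end to transfer any auxiliary conclusions. I would organize the write-up as the base case, then the inductive step split into the three items, deferring the construction of the explicit extreme points to small sublemmas.
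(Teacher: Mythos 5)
Your proposal is an outline rather than a proof, and the outline misses the central mechanism that makes the induction close. Comparing $f$ with extreme points fixed by $\Phi$ (Lemma~\ref{l2}) only gets you the one-sided bounds of Lemma~\ref{l5} and, with some work (Lemma~\ref{l7}, proved by a separate induction on the number of points where $\vert f\vert\neq 1$, then Lemma~\ref{l8}), the base case $n=0$. For the inductive step the comparison functions one needs are \emph{not} extreme points: they take values of modulus $u_{1,f}$, $\vert f(x)\vert$, or $u_{(n+1),f}$ on clopen sets $V_1,\dots,V_N$ covering $K'$, and nothing proved so far says $\Phi$ fixes such functions. The paper's key device is Lemma~\ref{l13} and its inductive analogue (Step~3): $\Phi$ restricted to each set $B_{\mathbf V,\mathbf h}$ is the identity. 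Proving this requires two ingredients absent from your plan: (a) the inclusion $\Phi^{-1}(B_{\mathbf V,\mathbf h})\subset B_{\mathbf V,\mathbf h}$ (Fact~\ref{l12}, resp.\ Step~2), established by a contradiction argument with a carefully built auxiliary function $l$ satisfying $\Phi^{-1}(l)=l$ via Corollary~\ref{l11} --- this, and not the construction of continuous extreme points, is where the function $\chi$ and hence metrizability enter, because without continuity of $\Phi^{-1}$ one only has the strict-inequality form of Corollary~\ref{l11}; and (b) Fact~\ref{l10}, where one observes that $B_{\mathbf V,\mathbf 0}$ is the unit ball of a finite-dimensional space, hence plastic, so the induced map is an isometry, and then Mankiewicz's theorem (Theorem~\ref{th2}) provides a \emph{linear} extension whose linearity transfers the identity from the functions $h\ind_{\{x_j\}}$ (handled by Lemma~\ref{l8}) to all of $B_{\mathbf V,\mathbf h}$. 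Without this plasticity-plus-Mankiewicz step your non-expansiveness comparisons have no fixed functions to compare against at the thresholds $u_{n,f}$, and the chain of inequalities does not close.

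Two further points your sketch passes over: the passage from $n$ to $n+1$ only works for $n\geqslant 1$ (because one needs $u_{0,f}+1-\vert f(x)\vert<\vert f(x)\vert$, which uses $u_{n,f}\geqslant u_{1,f}=\frac{1+u_{0,f}}{2}$), so the case $n=1$ must be proved separately by the same scheme with $u_{1,f}$-valued comparison functions; and the reduction to functions taking finitely many values (so that they lie in some $B_{\mathbf V,\mathbf h}$), followed by approximation preserving $f(x)$ and $u_{0,f}$, is what lets Step~3 be applied at all --- your ``truncation/convex combination'' idea does not by itself produce functions on which $\Phi$ is known.
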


\medskip Assume that we have been able to prove Lemma~\ref{l17}. Then we easily deduce
\begin{corollary}\label{l18}
For each $f \in B$ such that $u_{0,f}\notin \{0,1\}$, we have $\Phi (f)=f$. 
\end{corollary}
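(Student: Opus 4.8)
The plan is to derive Corollary~\ref{l18} from Lemma~\ref{l17} by a routine limiting argument; essentially all the work has already been done in the lemma, so I do not expect any genuine obstacle. Fix $f\in B$ with $u_{0,f}\notin\{0,1\}$. Since $u_{0,f}=\min\{|f(z)|;\, z\in K'\}$ and $\Vert f\Vert\leqslant 1$, we have $0<u_{0,f}<1$; consequently $u_{n,f}=\frac{n+u_{0,f}}{n+1}=1-\frac{1-u_{0,f}}{n+1}$ is a strictly increasing sequence satisfying $u_{n,f}<1$ for every $n$ and $u_{n,f}\to 1$ as $n\to\infty$. Since $\Phi(f)$ and $f$ are continuous and $K\setminus K'$ is dense in $K$, it is enough to prove that $\Phi(f)(x)=f(x)$ for every $x\in K\setminus K'$.

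I would then fix such an $x$ and split into two cases according to the value of $|f(x)|$. If $|f(x)|<1$, then because $u_{n,f}\to 1$ there is some $n$ with $|f(x)|<u_{n,f}$, and item~1) of Lemma~\ref{l17} immediately gives $\Phi(f)(x)=f(x)$. If $|f(x)|=1$, say $f(x)=1$ (the case $f(x)=-1$ being symmetric, via item~3) in place of item~2)), then $f(x)\geqslant u_{n,f}$ holds for every $n$, so item~2) yields $\Phi(f)(x)\in[u_{n,f},1]$ for every $n$; letting $n\to\infty$ and using $u_{n,f}\to 1$ forces $\Phi(f)(x)=1=f(x)$.

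Putting the two cases together gives $\Phi(f)(x)=f(x)$ for all $x\in K\setminus K'$, whence $\Phi(f)=f$ by density of $K\setminus K'$ in $K$ and continuity of $\Phi(f)$ and $f$. The only thing worth flagging is that the hypothesis $u_{0,f}\neq 1$ is exactly what makes $u_{n,f}<1$, so that the interior values $|f(x)|<1$ are all reached through item~1), while the extreme values $f(x)=\pm1$ are pinned down by the squeezing in items~2) and 3); thus no real difficulty arises beyond Lemma~\ref{l17} itself.
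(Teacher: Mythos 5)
Your proposal is correct and follows essentially the same route as the paper: use item~1) of Lemma~\ref{l17} when $\vert f(x)\vert<1$ (choosing $n$ with $\vert f(x)\vert<u_{n,f}$ since $u_{n,f}\to 1$), squeeze $\Phi(f)(x)$ to $\pm 1$ via items~2) and~3) when $f(x)=\pm 1$, and conclude by density of $K\setminus K'$ and continuity. No gaps.
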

\begin{proof}
Let $f \in B$ be such that $u_{0,f }\notin \{0,1\}$ and let $x \in K \setminus K'$. 
If $\vert f(x)\vert <1$ then, since $u_{n,f}\longrightarrow 1$ as $n\longrightarrow\infty$, there exists $n \in \Z_+$ such that $\vert f(x)\vert <u_{n,f}$, so $\Phi (f)(x)=f(x)$ by Lemma~\ref{l17} (item~1). If $f(x)=1$, then $f(x)\geqslant u_{n,f}$ for all $n\in\Z_+$, and hence $\Phi(f)(x)=1=f(x)$ by Lemma~\ref{l17} (item~2). Similarly, if $f(x)=-1$ then $\Phi(f)(x)=-1=f(x)$. Hence, for every $x \in K \minus K'$, we have $\Phi(f)(x)=f(x)$, and this implies that $\Phi(f)=f$ since $K \minus K'$ is dense in $K$. \end{proof}

\medskip Using Corollary~\ref{l18}, it is very easy to show that $\Phi=Id_B$, and hence to conclude the proof of Theorem~\ref{main}. Let 
\[D= \bigl\{f \in B ;\;  \forall z \in K',\ f(z) \notin\{ 0,1,-1\}\bigr\}.\]

\noindent
The set $D$ is contained in $\bigl\{ f\in B;\; u_{0,f}\notin\{ 0,1\}\bigr\}$; so, by Corollary~\ref{l18}, we have $\Phi(f)=f$ for all $f\in D$. Since $\Phi$ is continuous and $D$ is dense in $B$ by Remark~\ref{dense}, it follows that $\Phi=Id_B$.

\medskip 
In view of the previous discussion, our only task is now to prove Lemma~\ref{l17}; which will occupy us for a while.
\subsection{Proof of the main lemma: $n=0$} The case $n=0$ of Lemma~\ref{l17} will be deduced from the next two lemmas.
\begin{lemma}\label{l5}
For any $f, g \in B$ and every $x \in K\setminus K'$, the following implications hold true.
\begin{enumerate}[label={\rm \arabic*)}]
\item If $f(x)<0$, then $\Phi (f)(x) \in [f(x),0]$,
\item if $f(x)>0$, then $\Phi (f)(x) \in [0,f(x)]$. 
\item If $g(x)<0$, then $\Phi ^{-1}(g)(x)\in [-1, g(x)] $,
\item if $g(x)>0$, then $\Phi ^{-1}(g)(x)\in [g(x),1]$, 
\item if $g(x)=\pm 1$, then $\Phi ^{-1}(g)(x)=g(x)$.
\end{enumerate}
\end{lemma}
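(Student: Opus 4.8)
The plan is to derive items 1)--5) from Lemma~\ref{l2} by a ball-covering argument, very much in the spirit of Lemma~\ref{prop1} but applied to the map $\Phi$ rather than to $F$. First I would prove 3) and 4), since these concern $\Phi^{-1}$ and the covering argument is cleanest there; then 1), 2), 5) will follow by applying 3) and 4) to $\Phi^{-1}$ together with parts \ref{2}--\ref{5} of Lemma~\ref{l2}.

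The key step is the following. Fix $x\in K\setminus K'$ and suppose, say, $g(x)>0$; set $f=\Phi^{-1}(g)$. By Lemma~\ref{l2}\,\ref{5} we already know $f(x)>0$, so the only thing to prove is $f(x)\geqslant g(x)$. Assume towards a contradiction that $f(x)<g(x)$. Pick $\varepsilon>0$ with $f(x)+\varepsilon<g(x)$. The idea is to produce a function $h\in B$ witnessing a failure of surjectivity or of non-expansiveness. Using that $x$ is isolated and that $K$ is zero-dimensional, I would build, for a suitable clopen neighbourhood $U$ of $x$, a function $h\in B$ supported near $x$, of the form $h=c\ind_{\{x\}}$ or $h$ constant equal to some value on $U$ and controlled elsewhere, chosen so that $h$ is ``close to $g$'' in the relevant direction but ``far from $\Phi(f')$'' for every $f'\in B$ with $f'(x)$ small — more precisely, so that the only $f'\in B$ with $\Phi(f')$ close to $h$ must have $f'(x)$ large, contradicting $f=\Phi^{-1}(g)$ with $f(x)$ small. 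The precise device: since $\Phi$ maps extreme points to themselves (Lemma~\ref{l2}\,\ref{6}) and respects signs on $K\setminus K'$ (items 1)--5) of Lemma~\ref{l2}), one can take $h$ to be (a small perturbation of) an extreme point agreeing with $g$ in sign at $x$ but with a larger absolute value there, and use non-expansiveness of $\Phi^{-1}$ applied to the pair $(h,g)$ to force $\Phi^{-1}(h)(x)$ and $f(x)$ to lie within distance $\|h-g\|$ of each other, while the sign constraints force $\Phi^{-1}(h)(x)$ to be exactly $h(x)$ or to have a definite sign, pinning down $f(x)\ge g(x)$ in the limit. Item 5) is the extreme-point case and is immediate from Lemma~\ref{l2}\,\ref{6} (if $g(x)=\pm1$ then $g$ can be taken to be, or approximated by, an extreme point; more directly, $\|\Phi^{-1}(g)-g\|\le\ldots$ forces $\Phi^{-1}(g)(x)=g(x)$ once $|\Phi^{-1}(g)(x)|\le1=|g(x)|$ and the sign is fixed by Lemma~\ref{l2}\,\ref{4},\ref{5}).

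Once 3) and 4) are in hand, items 1) and 2) come for free: if $f(x)<0$, put $g=\Phi(f)$; Lemma~\ref{l2}\,\ref{2} gives $g(x)\le0$. If $g(x)=0$ we are done; if $g(x)<0$, apply item 3) to $g$, getting $\Phi^{-1}(g)(x)\in[-1,g(x)]$, i.e. $f(x)\le g(x)\le 0$, which together with $f(x)<0$ yields $g(x)\in[f(x),0]$ — wait, more carefully: item 3) gives $f(x)=\Phi^{-1}(g)(x)\le g(x)$, hence $g(x)\ge f(x)$, and combined with $g(x)\le 0$ this is exactly $\Phi(f)(x)=g(x)\in[f(x),0]$. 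Item 2) is symmetric.

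The main obstacle I expect is the construction of the witness function $h$: one must simultaneously arrange that $h$ lies in $B$, that $\|h-g\|$ (or $\|h-\Phi(f)\|$, depending on which direction one runs the argument) is small enough, and that the sign/magnitude conditions at the isolated point $x$ interact correctly with Lemma~\ref{l2}. Getting the quantifiers in the right order — choosing $U$ small, then $h$, then passing $\varepsilon\to0$ — and making sure the function is genuinely continuous (which forces using that $x$ is isolated and $K$ zero-dimensional, so $\{x\}$ and $U\setminus\{x\}$ are clopen) is the delicate bookkeeping. Everything else is a routine unwinding of the sign-preservation properties already established in Lemma~\ref{l2}.
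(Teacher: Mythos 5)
There is a genuine gap at the heart of your plan: your direct proof of items 3) and 4) explicitly invokes ``non-expansiveness of $\Phi^{-1}$ applied to the pair $(h,g)$''. But $\Phi^{-1}$ is not known to be non-expansive — only $\Phi$ is (it equals an isometry composed with $F$), and proving that $\Phi$ is an isometry (equivalently that $\Phi^{-1}$ is non-expansive, indeed that $\Phi=Id_B$) is the whole point of the argument. The only inequality available in the other direction is $\Vert \Phi^{-1}(h)-\Phi^{-1}(g)\Vert \geqslant \Vert h-g\Vert$, and that lower bound on a sup-norm does not let you localize at the point $x$ unless the witness $h$ is known to agree with the \emph{unknown} function $\Phi^{-1}(g)$ away from $x$, which you cannot arrange. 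So the key step of your proposal, as stated, does not go through, and the vague ``witness function'' construction cannot be completed along these lines.

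The fix is to reverse your order of proof, which is what the paper does: prove 1) and 2) first, directly about $\Phi$, where non-expansiveness is actually available. Concretely, for $f$ with $f(x)<0$ and $f$ nowhere vanishing, take the extreme point $g$ equal to $1$ at $x$ and to $\mathrm{sgn}(f)$ elsewhere (continuous because $x$ is isolated and $f$ never vanishes); then $\Vert f-g\Vert=1-f(x)$, and since $\Phi(g)=g$ by Lemma~\ref{l2} item 6 and $\Phi$ is non-expansive, $1-\Phi(f)(x)\leqslant \vert \Phi(f)(x)-g(x)\vert\leqslant 1-f(x)$, giving $\Phi(f)(x)\geqslant f(x)$; the general $f$ is handled by approximating with nowhere-vanishing functions fixing the value at $x$ (this resolves the continuity issue you flagged but left open). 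Items 3) and 4) then follow by the short sign argument you sketched, run in the opposite direction (via Lemma~\ref{l2} items 4 and 5 applied to $\Phi^{-1}(g)$, then item 1)/2) applied to that function), and 5) is immediate from 3) and 4) — no appeal to $g$ being an extreme point is needed, and your parenthetical justification of 5) via Lemma~\ref{l2} item 6 is in any case off, since $g(x)=\pm1$ at one point does not make $g$ extreme.
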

\begin{proof} 
To prove 1), let $f \in B$ and let $x \in K\setminus K'$ be such that $f(x)<0$: we want to show that $\Phi(f)(x)\in [f(x),0]$. Note that by Lemma~\ref{l2}, we already know that $\Phi (f)(x) \leqslant 0$; so we only need to show that $\Phi (f)(x) \geqslant f(x)$. \\
Let us first assume that $f(z)\neq 0$ for every $z\in K$. Then, we can define $g\in B$ as follows:
\[
g(y)= \begin{cases} 1&\text{if }y=x,\\
{\rm sgn}\bigl(f(y)\bigr) &\hbox{if $y\neq x$} .
\end{cases}
\]

\noindent
We have $\Vert f-g\Vert =1- f(x)$. Since $g$ is an extreme point of $B$, Lemma~\ref{l2} implies that $\Phi(g)=g$. Hence, $\Vert \Phi(f)-g\Vert = \Vert \Phi(f)-\Phi(g)\Vert \leqslant 1- f(x)$, because $\Phi$ is non-expansive. \\
Then, \[\begin{aligned}
    \vert \Phi (f)(x)-g(x)\vert \leqslant 1-f(x) &\implies 1- \Phi (f)(x) \leqslant 1-f(x) \\&\implies \Phi (f)(x) \geqslant f(x).
\end{aligned}\]

\noindent Now consider any $f\in B$ such that $f(x)<0$. One can find a sequence $(f_n)_{n\in\N}\subset B$ such that $f_n\longrightarrow f$, $f_n(x)=f(x)$ for all $n$ and the functions $f_n$ never vanish on $K$. Then, $\Phi(f_n)(x)\in [f_n(x),0]$ for all $n$, and hence $\Phi(f)(x)\in [f(x),0]$ because $\Phi$ is continuous.

\smallskip\noindent
 The proof of 2) is the same. (First consider $f\in B$ such that $f(x)>0$ and $f(z)\neq 0$ for every $z\in K$. Define $g$ as above but with $g(x)=-1$. Then conclude by approximation.)

\medskip
Let us prove 3) and 4). If  $g \in B$ and $x \in K\setminus K'$, then
\begin{eqnarray*}
   g(x)<0 &\implies&  \Phi ^{-1}(g)(x)<0 \qquad\hbox{by Lemma~\ref{l2}}\\&\implies&\Phi (\Phi ^{-1}(g))(x) \in [\Phi ^{-1}(g)(x),0]\qquad\hbox{by (1)}\\&\implies& -1\leqslant \Phi ^{-1}(g)(x) \leqslant g(x).
\end{eqnarray*}
The case $g(x) >0$ is similar.

\medskip Finally, 5) follows from 3) and 4).
\end{proof}

\begin{lemma}\label{l7}
If $f \in B$ is such that the set $S_f = \bigl\{x \in K ;\;  f(x) \notin \{-1,1\}\bigr\}$ is finite, then $\Phi (f)=f$.
\end{lemma}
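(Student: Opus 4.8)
The plan is to establish the statement first for a \emph{non-vanishing} function $f$ (that is, $0\notin f(K)$) by induction on $k=|S_f|$, and then to recover the general case by an approximation argument. Note first that every point of $S_f$ is isolated in $K$, since $S_f=f^{-1}(\R\setminus\{-1,1\})$ is open and would be infinite if it contained a non-isolated point; in particular $S_f\subseteq K\setminus K'$, and a function obtained from $f$ by changing its values at points of $S_f$ is still continuous. The base case $k=0$ is immediate: then $\vert f\vert\equiv 1$, so $f$ is an extreme point of $B$ and $\Phi(f)=f$ by Lemma~\ref{l2} (item~6).

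For the inductive step, assume the statement for all non-vanishing $g$ with $|S_g|<k$, and let $f$ be non-vanishing with $S_f=\{a_1,\dots,a_k\}$ and $t_i:=f(a_i)\in(-1,1)\setminus\{0\}$. The key move is to consider $h:=\Phi^{-1}(f)$ rather than $\Phi(f)$. By the parts of Lemma~\ref{l5} dealing with $\Phi^{-1}$, at every $x\in K\setminus K'$ the value $h(x)$ has the same sign as $f(x)$ and satisfies $\vert h(x)\vert\geqslant\vert f(x)\vert$ (using that $f$ does not vanish); since $K\setminus K'$ is dense in $K$ and $h,f$ are continuous, this holds on all of $K$. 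As $h\in B$, it follows that $h=f$ at every point where $\vert f\vert=1$, i.e. on $K\setminus S_f$, and that $\mathrm{sgn}\bigl(h(a_i)\bigr)=\mathrm{sgn}(t_i)$ with $\vert t_i\vert\leqslant\vert h(a_i)\vert\leqslant 1$ for each $i$; in particular $S_h\subseteq S_f$. Were $S_h$ a proper subset of $S_f$, then $h$ would be non-vanishing with $|S_h|<k$, so the inductive hypothesis would give $\Phi(h)=h$; but $\Phi(h)=f$, whence $h=f$ and $S_h=S_f$, a contradiction. Hence $S_h=S_f$. Now fix $i$ and let $h^{(i)}\in B$ be the (continuous) function obtained from $h$ by replacing the value $h(a_i)$ with $\mathrm{sgn}(t_i)$; then $h^{(i)}$ is non-vanishing with $|S_{h^{(i)}}|=k-1$, so $\Phi(h^{(i)})=h^{(i)}$ by the inductive hypothesis. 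Since $\Phi$ is non-expansive and $\Phi(h)=f$,
\[ 1-\vert t_i\vert=\bigl\vert t_i-\mathrm{sgn}(t_i)\bigr\vert\leqslant\Vert f-h^{(i)}\Vert=\Vert\Phi(h)-\Phi(h^{(i)})\Vert\leqslant\Vert h-h^{(i)}\Vert=1-\vert h(a_i)\vert, \]
so $\vert h(a_i)\vert\leqslant\vert t_i\vert$; combined with $\vert h(a_i)\vert\geqslant\vert t_i\vert$ and the equality of signs, $h(a_i)=t_i$. As this holds for every $i$ and $h=f$ off $S_f$, we get $h=f$, i.e. $\Phi^{-1}(f)=f$, i.e. $\Phi(f)=f$.

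For a general $f\in B$ with $S_f$ finite, let $f_n$ coincide with $f$ except that $f_n$ equals $1/n$ at each of the finitely many (isolated) points at which $f$ vanishes; then $f_n\in B$ is non-vanishing with $S_{f_n}=S_f$ for $n\geqslant 2$, and $f_n\to f$, so $\Phi(f)=\lim_n\Phi(f_n)=\lim_n f_n=f$ by the previous case and the continuity of $\Phi$. I expect the main obstacle to be realising that one should pass from $\Phi(f)$ to $h=\Phi^{-1}(f)$: a direct approach via $\Phi(f)$ only provides the ``$\Phi$-inequalities'' of Lemma~\ref{l5}, which (together with non-expansiveness against the sign-matching extreme point and the inductive hypothesis) pin down $\Phi(f)$ on the finite set $S_f$ but leave its values on $K\setminus S_f$ undetermined; whereas the ``$\Phi^{-1}$-inequalities'' force $h$ to agree with $f$ on all of $K\setminus S_f$ automatically — since $\vert f\vert\equiv 1$ there — thereby reducing the whole problem to the finitely many points of $S_f$.
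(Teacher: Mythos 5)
Your proof is correct and follows essentially the same route as the paper's: induction on $\# S_f$, passing to $h=\Phi^{-1}(f)$ so that Lemma~\ref{l5} forces $h=f$ off $S_f$ (using density of $K\setminus K'$ and the fact that $S_f\cap K'=\emptyset$), and then pinning down $h$ at each point of $S_f$ by non-expansiveness against a modification of $h$ that the induction hypothesis fixes. The only (harmless) difference is that you use a single comparison function per point together with the one-sided sign inequalities of Lemma~\ref{l5}, which obliges you to restrict to non-vanishing $f$ and finish with an approximation argument, whereas the paper compares against two functions (with values $+1$ and $-1$ at the point) and thereby gets both inequalities from non-expansiveness alone, so its induction runs over all $f$ with $S_f$ finite directly.
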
 
\begin{proof} We first observe that for any $f\in B$ such that $S_f$ is finite, we have
\[ K' \cap S_f = \emptyset.\]
Indeed, suppose that there exists $x_0\in K' \cap S_f$; so any neighbourhood of $x_0$ contains infinitely many points, and  $\vert f(x_0)\vert<1$. Since $f$ is continuous, it follows that there exist infinitely many $x\in K$ such that $f(x)\notin\{ -1,1\}$, which is a contradiction. 

\medskip
To prove the lemma, we proceed by induction on the number of elements of $S_f$. 

\smallskip\noindent
If $S_f$ is empty, then $f$ is an extreme point of $B$, and Lemma~\ref{l2} implies that $\Phi(f)=f$.

\smallskip\noindent
Now, let $M$ be a non-negative integer and suppose that the lemma has been proved for all $f\in B$ such that  $\# S_f\leqslant M$. Let us show that the claim also holds for all $f\in B$ such that $\#S_f=M+1$. \\
Let $f \in B$ be such that $\# S_f= M+1$. We need to show that $\Phi (f)=f$, \textit{ i.e.} $\Phi ^{-1}(f)=f$.\\
Lemma~\ref{l5} implies that 
\begin{equation}\label{formulaS}
    \Phi  ^{-1}(f)(x)=f(x)\qquad\hbox{for all } x\in K\setminus(K'\cup S_f).
\end{equation}
Moreover, if $x \in K'$, then there exists a net $(x_d)_{d\in D}$ in $K\setminus (K'\cup S_f)$ such that $x_d \longrightarrow x$ (because $K\setminus K'$ is dense in $K$ and $S_f$ is finite). 
The continuity of $\Phi ^{-1}(f)$ implies that $\Phi  ^{-1}(f)(x_d) \longrightarrow \Phi  ^{-1}(f)(x)$, \mbox{\it i.e.}  $f(x_d) \longrightarrow \Phi  ^{-1}(f)(x)$ by (\ref{formulaS}). Since $f$ is continuous, it follows that $\Phi^{-1}(f)(x)=f(x)$.\\
Hence, we have shown that $\Phi  ^{-1}(f)(x)=f(x)$ for all $x \in K \minus S_f$. It remains to show that $\Phi  ^{-1}(f)(x)=f(x)$ for all $x \in S_f$. \\
Let $x \in S_f$ (then $x \in K\setminus K'$ since $S_f\cap K'=\emptyset$). We define the following two functions $g$ and $h$ from $K$ to $\R$:
\[
    g(x)=1\;,\quad  h(x)=-1 \quad\hbox{and} \quad g(y)=h(y)= \Phi ^{-1}(f)(y)\; \;\hbox{for all}\;y \neq x. 
\]
Since $x\in K\setminus K'$, these functions $g$ and $h$ are continuous, and hence $g,h\in B$. \\
Note that $S_g= \bigl\{x \in K ;\;  g(x) \notin \{-1,1\}\bigr\} $ and $S_h= \bigl\{x \in K ;\;  h(x) \notin \{-1,1\}\bigr\} $ are contained in the set $S_f \minus \{x\}$. 
Since $\#(S_f \minus \{x\})=M$ (because $\# S_f=M+1$ and $x \in S_f$), it follows that  $\#S_h \leqslant M$ and $\# S_g \leqslant M$. 
Therefore, we can apply the induction hypothesis to $g$ and $h$ to obtain $\Phi (g) =g$ and $\Phi (h) =h$. \\
Since $g$, $h$ and $\Phi ^{-1}(f)$ coincide on $K \minus \{x\}$, we have 
\[ \Vert \Phi  ^{-1}(f)-g\Vert =\vert \Phi ^{-1}(f)(x)-g(x)\vert, \]
and 
\[ \Vert \Phi  ^{-1}(f)-h\Vert =\vert \Phi ^{-1}(f)(x)-h(x)\vert .\]
Since $\Phi $ is non-expansive, $\Phi (g) =g$ and $\Phi (h) =h$, it follows that
\begin{equation}\label{ineq1}
\vert f(x)-g(x)\vert \leqslant\Vert f-g\Vert \leqslant \Vert \Phi ^{-1}(f)-g\Vert = \vert \Phi ^{-1}(f)(x)-g(x)\vert ,
\end{equation}
and 
\begin{equation}\label{ineq2}
\vert f(x)-h(x)\vert \leqslant\Vert f-h\Vert \leqslant \Vert \Phi ^{-1}(f)-h\Vert = \vert \Phi ^{-1}(f)(x) -h(x)\vert .
\end{equation}
Hence, 
\[(\ref{ineq1}) \implies 1-f(x) \leqslant 1- \Phi ^{-1}(f)(x) \implies f(x) \geqslant \Phi ^{-1}(f)(x) ,\]
and
\[ (\ref{ineq2})\implies f(x)+1 \leqslant \Phi ^{-1}(f)(x) +1 \implies f(x) \leqslant \Phi ^{-1}(f)(x).\]

\smallskip\noindent
Therefore, $\Phi  ^{-1}(f)(x)=f(x)$.
\end{proof}

\medskip We are now in position to prove the case $n=0$ of Lemma~\ref{l17}. We prove in fact a slightly more general result since we do not assume that $u_{0,f}\neq 1$.
\begin{lemma}\label{l8}
Let $f \in B$ be such that $h =u_{0,f}= \min\, \{\vert f(z)\vert ;\;  z \in K'\}\neq 0$. For any $x \in K\setminus K'$, we have the following implications: \begin{enumerate}[label={\rm \arabic*)}]
\item if $\vert f(x)\vert <h$, then $\Phi (f)(x)=f(x)$, 
\item if $f(x) \geqslant h$, then $\Phi (f)(x) \in [h,f(x)]$, 
\item if $f(x) \leqslant -h$, then $\Phi (f)(x) \in [f(x),-h]$. \end{enumerate}
\end{lemma}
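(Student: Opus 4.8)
The plan is to use the two preceding lemmas to set up a function that forces $\Phi(f)$ to be sandwiched in the right place. The core idea: given $f$ with $u_{0,f}=h\neq 0$, the set where $|f|<h$ does not meet $K'$, and on the rest of $K$ we have $|f|\geqslant h$. We want to build, for a fixed $x\in K\setminus K'$, an auxiliary function $g\in B$ that agrees with $f$ in a controlled way, whose value $\Phi(g)=g$ we already know (because $g$ is a suitable ``thresholded'' version of $f$), and then use the non-expansiveness of $\Phi$ together with $\|\Phi(f)-\Phi(g)\|\leqslant\|f-g\|$ to pin down $\Phi(f)(x)$.

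First I would dispose of the signs using Lemma~\ref{l2}: if $f(x)>0$ then $\Phi(f)(x)\geqslant 0$, and if $f(x)<0$ then $\Phi(f)(x)\leqslant 0$; and if $f(x)=0$ then $\Phi(f)(x)=0$, which already gives item~1) in that sub-case. So the work is to prove the quantitative bounds when $f(x)\neq 0$. The key auxiliary construction: given $f$ and a target point $x$, define $g$ by ``clamping $f$ toward the nearest extreme value outside a small set and fixing the value at $x$''. More precisely, for the case $f(x)\geqslant h$ I would consider $g$ equal to $\mathrm{sgn}(f(y))$ (or $+1$ where $f$ vanishes — but near $K'$ that does not happen) for $y$ outside the finite set $S=\{y: |f(y)|<h\}\cup\{x\}$, with $g(x)$ chosen as $-1$ to get the upper bound $\Phi(f)(x)\leqslant f(x)$, and (via a second comparison function) to get the lower bound $\Phi(f)(x)\geqslant h$ one compares against a function that equals $h\cdot\mathrm{sgn}(f)$-type values near $x$. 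Since the relevant comparison functions $g$ have $S_g$ finite, Lemma~\ref{l7} gives $\Phi(g)=g$, and then $\|\Phi(f)-g\|=\|\Phi(f)-\Phi(g)\|\leqslant\|f-g\|$ evaluated at $x$ yields exactly the claimed interval. The case $f(x)\leqslant -h$ is symmetric, and for item~1), $|f(x)|<h$, one runs the argument with both a $+1$-at-$x$ and a $-1$-at-$x$ comparison to squeeze $\Phi(f)(x)$ to equal $f(x)$ — the point being that when $|f(x)|<h$ the ``clamp'' leaves a gap on both sides, so both inequalities $\Phi(f)(x)\geqslant f(x)$ and $\Phi(f)(x)\leqslant f(x)$ come out.

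A technical wrinkle, exactly as in Lemma~\ref{l5}, is that the comparison functions $g$ I want to write down need to be \emph{continuous}, which forces me to be careful on the boundary between $S$ and its complement; the clean fix is that $S$ is open (it is $\{|f|<h\}$, and since $h=u_{0,f}$ attaining the minimum is not an issue here because we only need $S$ clopen after possibly shrinking, and $K$ is zero-dimensional). Actually the honest route, mirroring Lemma~\ref{l7}, is: first handle $f$ for which the relevant threshold set is clopen (so $g$ is genuinely continuous), then pass to the general $f$ by approximating with functions $f_n\to f$, $f_n(x)=f(x)$, for which the threshold sets are clopen, using continuity of $\Phi$. I expect this approximation/continuity bookkeeping to be the only real nuisance; the main conceptual content is simply the choice of the comparison functions $g$, which is entirely parallel to the $n=0$ proofs already given for Lemmas~\ref{l5} and~\ref{l7}. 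The hardest part is making sure the ``clamped'' $g$ both lies in $B$ and has $S_g$ finite so that Lemma~\ref{l7} applies — once that is arranged, the inequalities are one-line computations of the form $|\Phi(f)(x)-g(x)|\leqslant\|f-g\|$.
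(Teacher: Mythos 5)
Your core mechanism is the paper's: compare $f$ with a ``clamped'' function $g$ that Lemma~\ref{l7} fixes (so $\Phi(g)=g$ because $S_g$ is finite), then evaluate the non-expansiveness inequality at $x$, using Lemma~\ref{l5} for the bound in the other direction. But two points in your write-up need repair. First, the set $\{y;\ |f(y)|<h\}$ is in general \emph{not} finite: it is open and disjoint from $K'$, but an open set disjoint from $K'$ can be infinite (think of $K=\{0\}\cup\{1/n\}$ with $f(1/n)<h=|f(0)|$ for all $n$); it is finite exactly when it is closed, i.e.\ clopen, since a closed set disjoint from $K'$ is compact and discrete. So your ``clamp outside $S$'' function is neither continuous nor within reach of Lemma~\ref{l7} for a general $f$, and the proof really must go through your ``honest route'': prove the statement for $f$ whose threshold set is clopen (equivalently, for $f$ taking finitely many values), then pass to the limit; in doing so you also have to control $u_{0,f_n}$, e.g.\ arrange $u_{0,f_n}=u_{0,f}$ and $f_n(x)=f(x)$ as the paper does in its later approximation arguments (or use $|u_{0,f_n}-u_{0,f}|\leqslant\|f_n-f\|$), since the threshold appearing in the conclusion is $u_{0,f}$ itself. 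The paper avoids approximation here altogether: it clamps $f$ to $\pm1$ only on finitely many clopen neighbourhoods $V_i$ of $K'$ on which $f$ has oscillation $<\varepsilon$, keeps $f$ on the finite remainder, and lets $\varepsilon\to0$ (for item~1, $\varepsilon=h-|f(x)|$ is absorbed exactly).

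Second, your comparison function for the lower bound in item~2 does not work as described: a function taking the value $h\,\mathrm{sgn}(f(x))$ at (or near) $x$ only yields $|\Phi(f)(x)-h|\leqslant\|f-g\|\leqslant 1-h$, i.e.\ $\Phi(f)(x)\geqslant 2h-1$, which is weaker than $h$. The correct choice --- the same one you already use for item~1 --- is $g(x)=+1$, with $g=\mathrm{sgn}(f)$ on $\{|f|\geqslant h\}\setminus\{x\}$ and $g=f$ on $\{|f|<h\}$: then $\|f-g\|\leqslant 1-h$ because $f(x)\geqslant h$, so $1-\Phi(f)(x)\leqslant 1-h$, i.e.\ $\Phi(f)(x)\geqslant h$, while Lemma~\ref{l5} gives $\Phi(f)(x)\leqslant f(x)$. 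With these two repairs your argument is correct and essentially coincides with the paper's, differing only in how the continuity/finiteness obstruction is handled (reduction to clopen threshold sets by approximation, versus the paper's $\varepsilon$-oscillation clamping on clopen neighbourhoods of $K'$); your variant needs no $\varepsilon$ in the special case, at the cost of the extra approximation step and its $u_{0}$-preservation bookkeeping.
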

\begin{proof}
1) Let $x \in K\setminus K'$ be such that $\vert f(x)\vert  <h$. \\
If $f(x)=0$, then Lemma~\ref{l2} implies that $\Phi (f)(x) =0$; so we assume that 
 $f(x) \neq 0$. In what follows, we set $\varepsilon = h -\vert f(x)\vert  >0$. \\
Since $f: K \rightarrow \R$ is continuous, $u_{0,f}\neq 0$ and $K$ is compact and zero-dimensional, one can find clopen sets $V_1,\dots ,V_N\subset K$ such that 
\begin{itemize}
\item[-] $V_i\cap K'\neq\emptyset$ for all $i$ and $\bigcup_{i=1}^N V_i\supset K'$,
\item[-] the $V_i$ are pairwise disjoint and do not contain $x$,
\item[-] $f$ has constant sign $\varepsilon_i$ on each $V_i$,
\item[-] ${\rm diam}\bigl(f(V_i)\bigr)<\varepsilon$ for $i=1,\dots ,N$.
\end{itemize}

\smallskip\noindent
Now, we define $g \in B$ as follows: 
\[
    g(y)= \begin{cases}
    \text{sgn}\bigl(f(x)\bigr) &\text{if }y=x, \\
    \varepsilon_i &\text{if } y \in V_i\;\hbox{for some}\; i \in \{1,\dots,N\},\\
    f(y) &\text{if } y \notin \biggl(\displaystyle \bigcup _{i=1}^{N}V_i \cup \{x\}\biggr).
     \end{cases}
\]
Since $K \minus  \bigcup _{i=1}^{N}V_i$ is a closed subset of $K$ that does not contain accumulation points, and since $K$ is compact, we see that $K \minus  \bigcup _{i=1}^{N}V_i$ is finite. Hence, we can apply Lemma~\ref{l7} to $g$, and we obtain \[ \Phi  (g)=g.\]
Let us show that $\Vert f-g\Vert  \leqslant 1-\vert f(x)\vert$, \mbox{\it i.e.} $\vert f(y)-g(y)\vert\leqslant 1-\vert f(x)\vert$ for all $y\in K$. \\
- If $y \notin \left( \bigcup _{i=1}^{N}V_i \cup \{x\}\right)$, then $\vert f(y)-g(y)\vert =0$. \\
- If $y=x$, then $\vert f(y)-g(y)\vert =1-\vert f(x)\vert $. \\
- If $y \in V_i$ for some $i$, choose $z\in K'\cap V_i$. Then, $\vert f(y)-f(z)\vert <\varepsilon$ and  ${\rm sgn} \bigl(f(y)\bigr)=\varepsilon_i$. So, by the definition of $g$ and $h$, we get
\begin{align*}
\vert f(y)-g(y)\vert &= 1-\vert f(y)\vert\\
&<1-\vert f(z)\vert +\varepsilon\\
&\leqslant 1-h+\varepsilon=1-\vert f(x)\vert.
\end{align*}

\noindent
So we have indeed $\Vert f-g\Vert =1-\vert f(x)\vert $. \\
Since $\Phi $ is non-expansive and $\Phi (g)=g$, it follows that $\Vert \Phi(f)-g\Vert \leqslant 1-\vert f(x)\vert$, and in particular
\begin{align}\label{ineqphi}  \vert \Phi (f)(x)-g(x)\vert \leqslant 1-\vert f(x)\vert .  \end{align}
If $f(x)<0$, then  $g(x)=-1$ and hence
\[ (\ref{ineqphi}) \implies 1+\Phi (f)(x) \leqslant 1+ f(x) \implies \Phi (f)(x) \leqslant f(x).\]
Moreover, Lemma~\ref{l5} implies that $\Phi (f)(x) \geqslant f(x)$. So we get $\Phi (f)(x)=f(x)$. \\
Similarly, if $f(x)>0$ then $g(x)=1$; so, using (\ref{ineqphi}), we obtain $\Phi(f)(x) \geqslant f(x)$, and hence $\Phi(f)(x)=f(x)$ by Lemma~\ref{l5}.

\medskip
2) Let $x\in K\setminus K'$ be such that $f(x) \geqslant h$.\\
Since $f(x) \geqslant h >0$, Lemma~\ref{l5} implies that $\Phi (f)(x) \leqslant f(x)$. So 
it remains to show that $\Phi (f)(x) \geqslant h$.\\
Let $\varepsilon >0$ be arbitrary. \\
Let us choose $V_1,\dots, V_N$ as in the proof of 1), and define $g \in B$ as in the proof of 1), so that $\Phi (g)=g$. \\
Let us show that $\Vert f-g\Vert \leqslant 1-h+\varepsilon$. \\
- If $y \notin \left( \bigcup _{i=1}^{N}V_i \cup \{x\}\right)$, then $\vert f(y)-g(y)\vert =0$. \\
- If $y=x$, then $\vert f(y)-g(y)\vert =1-f(x) \leqslant 1-h$. \\
- If $y \in V_i$ for some $i$, take $z\in V_i\cap K'$. Then, $\vert f(y)-f(z)\vert <\varepsilon$  and  ${\rm sgn} \bigl(f(y)\bigr)=\varepsilon_i$, so we get as above $\vert f(y)-g(y)\vert <1-h + \varepsilon$. This shows that $\Vert f-g\Vert \leqslant 1-h+\varepsilon$.\\
Since $g(x)=1$ and $\Phi $ is non-expansive, we get
\[\begin{aligned}
\Vert \Phi (f)-g\Vert =\Vert \Phi (f)-\Phi (g)\Vert  \leqslant1-h+\varepsilon &\implies \vert \Phi (f)(x)-g(x)\vert \leqslant1-h + \varepsilon \\&\implies 1-\Phi (f)(x) \leqslant1-h + \varepsilon \\&\implies \Phi (f)(x) \geqslant h -\varepsilon.
\end{aligned}\]
Since $\varepsilon>0$ is arbitrary, it follows that $\Phi (f)(x) \geqslant h$.

\medskip
3) The proof is similar to that of 2). 
\end{proof} 

\medskip
For future use, we state two consequences of Lemma~\ref{l8}.  

\begin{corollary}\label{l9}
For each $f\in B$, we have  $u_{0,f}=u_{0,\Phi (f)}$.
\end{corollary}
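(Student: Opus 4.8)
The plan is to show that $\Phi$ leaves the quantity $u_{0,f}=\min\{\,|f(z)|:z\in K'\,\}$ unchanged by controlling the values of $\Phi(f)$ near the accumulation points and then passing to the limit, using that $K\setminus K'$ is dense in $K$. Write $h=u_{0,f}$.

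First I would establish the following sandwich estimate: for every $z\in K'$,
\[
\min\{h,|f(z)|\}\ \leqslant\ |\Phi(f)(z)|\ \leqslant\ \max\{h,|f(z)|\}.
\]
To prove it, fix $z\in K'$ and choose a sequence $(x_n)\subset K\setminus K'$ with $x_n\to z$ (possible since $K$ is metrizable and $K\setminus K'$ is dense in $K$). If $h\neq 0$, then Lemma~\ref{l8} applies to $f$, and distinguishing the three cases $|f(x_n)|<h$, $f(x_n)\geqslant h$, and $f(x_n)\leqslant -h$, one checks in each case that $\min\{h,|f(x_n)|\}\leqslant|\Phi(f)(x_n)|\leqslant\max\{h,|f(x_n)|\}$: in the first case $\Phi(f)(x_n)=f(x_n)$, while in the other two $\Phi(f)(x_n)$ has the same sign as $f(x_n)$ and its absolute value lies between $h$ and $|f(x_n)|$. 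If $h=0$, the same double inequality reduces to $|\Phi(f)(x_n)|\leqslant|f(x_n)|$, which follows from Lemma~\ref{l5} (items 1 and 2) when $f(x_n)\neq 0$ and from Lemma~\ref{l2} when $f(x_n)=0$. Letting $n\to\infty$, continuity of $f$ gives $|f(x_n)|\to|f(z)|$ and continuity of $\Phi(f)$ gives $|\Phi(f)(x_n)|\to|\Phi(f)(z)|$, which yields the claimed estimate.

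Next I would read off the corollary. Since $|f(z)|\geqslant h$ for every $z\in K'$, the left-hand inequality gives $|\Phi(f)(z)|\geqslant h$ for all $z\in K'$, hence $u_{0,\Phi(f)}\geqslant h$. On the other hand $K'$ is compact (being closed in $K$) and $|f|$ is continuous, so there is $z_0\in K'$ with $|f(z_0)|=h$; the right-hand inequality then gives $|\Phi(f)(z_0)|\leqslant h$, hence $u_{0,\Phi(f)}\leqslant h$. Therefore $u_{0,\Phi(f)}=h=u_{0,f}$.

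There is no serious obstacle: the argument is a routine limiting argument along a sequence approaching a point of $K'$. The only subtlety is that Lemma~\ref{l8} is stated only for $h\neq 0$, so the degenerate case $h=0$ must be treated separately using the cruder bounds of Lemmas~\ref{l2} and~\ref{l5}.
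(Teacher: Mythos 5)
Your proof is correct and follows essentially the same route as the paper: both deduce the equality from Lemmas~\ref{l2}, \ref{l5} and~\ref{l8}, bounding $\vert\Phi(f)(z)\vert$ from below by $u_{0,f}$ at every $z\in K'$ and from above at a point where $\vert f\vert$ attains its minimum on $K'$. The only difference is that you spell out, via the sandwich estimate and a sequence in $K\setminus K'$ converging to $z$, the density-plus-continuity step that the paper uses implicitly when it invokes those lemmas (stated for $x\in K\setminus K'$) at points of $K'$, which is a welcome clarification.
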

\begin{proof}
Let us first show that if $u_{0,f}=0$ then $u_{0,\Phi(f)}=0$. \\
Let $z_0 \in K'$ be such that $f(z_0)=0$. Lemma~\ref{l2} (item~1) and Lemma~\ref{l5} (items~1 and~2) imply that $\Phi(f)(z_0) =0$. Then, $u_{0,\Phi(f)}=0$. \\
Now, let us show that if $u_{0,f}\neq 0$ then $u_{0,\Phi(f)}=u_{0,f}$. \\
On the one hand, let $z_{0} \in K'$ be such that $u_{0,f}=\vert f(z_{0})\vert $. 
Lemma~\ref{l8} (items~1, 2 and 3) implies that $\Phi (f)(z_{0})=f(z_{0})$. 
Hence, $u_{0,\Phi (f)}\leqslant u_{0,f}$. \\
On the other hand, for any $z\in K'$, we have $\vert f(z)\vert  \geqslant u_{0,f}$. 
If $\vert f(z)\vert  = u_{0,f}$, then Lemma~\ref{l8} (items~1, 2 and 3) implies that $\Phi (f)(z)=f(z)$, whereas if
 $\vert f(z)\vert  > u_{0,f}$, then Lemma~\ref{l8} (items~2 and 3) implies that $\vert \Phi (f)(z)\vert  \geqslant  u_{0,f}$. 
Hence, we have $\vert \Phi (f)(z)\vert  \geqslant  u_{0,f}$ for all $z\in K'$,  so that $ u_{0,f} \leqslant u_{0,\Phi (f)} $. \\
Altogether, we obtain $u_{0,f}= u_{0, \Phi (f)}$.
\end{proof}

\smallskip
\begin{corollary}\label{l11}
Let $g \in B$ be such that $u_{0,g}\neq 0$ and $x \in K\setminus K'$.  
If $\vert g(x)\vert <u_{0,g}$, then $\Phi  ^{-1}(g)(x)=g(x)$.
\end{corollary}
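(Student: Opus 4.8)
The plan is to reduce the statement to Lemma~\ref{l8} applied to the function $f:=\Phi^{-1}(g)$, transporting the hypothesis $u_{0,g}\neq 0$ from $g$ to $f$ by means of Corollary~\ref{l9}. First I would set $f=\Phi^{-1}(g)\in B$, so that $\Phi(f)=g$, and invoke Corollary~\ref{l9} to obtain $u_{0,f}=u_{0,\Phi(f)}=u_{0,g}$; I would write $h$ for this common value, which is non-zero by hypothesis. Note also that $x\in K\setminus K'$, so Lemma~\ref{l8} is indeed applicable to $f$ at the point $x$.

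The key observation is that, by assumption, $\vert \Phi(f)(x)\vert=\vert g(x)\vert<h$. I would then run through the trichotomy for the value $f(x)$ provided by Lemma~\ref{l8}. If $f(x)\geqslant h$, then item~2 of Lemma~\ref{l8} forces $\Phi(f)(x)\geqslant h$, contradicting $\vert\Phi(f)(x)\vert<h$; likewise, if $f(x)\leqslant -h$, then item~3 forces $\Phi(f)(x)\leqslant -h$, again a contradiction. Hence the only surviving possibility is $\vert f(x)\vert<h$.

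Finally, with $\vert f(x)\vert<h$ established, item~1 of Lemma~\ref{l8} yields $\Phi(f)(x)=f(x)$, that is $g(x)=\Phi^{-1}(g)(x)$, which is exactly the desired conclusion. I do not expect any real obstacle here: the substantive work has already been done in Lemma~\ref{l8} and Corollary~\ref{l9}, and the proof is a short bookkeeping argument; the only point requiring a moment's care is checking that $f=\Phi^{-1}(g)$ genuinely satisfies the hypotheses of Lemma~\ref{l8}, namely $f\in B$ (clear, since $\Phi$ is a bijection of $B$) and $u_{0,f}\neq 0$ (clear from Corollary~\ref{l9}).
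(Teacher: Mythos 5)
Your argument is correct and is essentially identical to the paper's own proof: both set $f=\Phi^{-1}(g)$, transfer $u_{0,g}\neq 0$ to $u_{0,f}$ via Corollary~\ref{l9}, rule out $f(x)\geqslant u_{0,f}$ and $f(x)\leqslant -u_{0,f}$ by items~2 and~3 of Lemma~\ref{l8} (since they would force $\vert g(x)\vert\geqslant u_{0,g}$), and conclude with item~1. Nothing is missing.
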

\begin{proof}
Denote $\Phi ^{-1}(g)$ by $f$. Then, $u_{0,f}=u_{0,g} \neq 0$ by Corollary~\ref{l9}. \\
Three cases may occur: $\vert f(x)\vert <u_{0,f}$, $f(x) \geqslant u_{0,f}$ or $f(x) \leqslant - u_{0,f}$. \\
If $f(x) \geqslant u_{0,f}$, then Lemma~\ref{l8} implies that $g(x) \geqslant u_{0,f}=u_{0,g}$, which is a contradiction. Similarly, if
 $f(x) \leqslant - u_{0,f}$, then Lemma~\ref{l8} gives a contradiction. 
So, we have $\vert f(x)\vert <u_{0,f}$. Hence, by Lemma~\ref{l8} again, $f(x)= \Phi (f)(x)$, \textit{i.e.} $\Phi  ^{-1}(g)(x)=g(x)$.
\end{proof}

\begin{remark} If we knew that $\Phi^{-1}$ is continuous then, in Corollary~\ref{l11}, it would be enough to assume that $\vert g(x)\vert \leqslant u_{0,g}$. Indeed, for any $n\in\N$, the function $g_n\in B$ defined by $g_n(x)=(1-\frac1n) g(x)$ and $g_n(y)=g(y)$ for $y\neq x$ satisfies $\vert g_n(x)\vert <u_{0,g}=u_{0,g_n}$, and $g_n\longrightarrow g$ as $n\longrightarrow\infty$. So, applying Corollary~\ref{l11} to $g_n$, we get $\Phi^{-1}(g)(x)=g(x)$.
\end{remark}

\subsection{A digression} In this subsection, we obtain some information on $\Phi(f)$ when $f\in B$ takes only a finite number of values. This will be needed for the proof of Lemma~\ref{l17}.

\medskip  
Let us fix some notation. 

\smallskip  We denote by $\mathcal V$ the family of all finite sequences $\mathbf V=(V_1,\dots ,V_N)$ of clopen subsets of $K$ such that the $V_i$ are pairwise disjoint, $V_i\cap K'\neq\emptyset$ for all $i$ and $\bigcup_{i=1}^N V_i\supset K'$. 

\smallskip\
If $\mathbf V=(V_1,\dots ,V_N)\in\mathcal V$ and $\mathbf h=(h_1,\dots ,h_N)\in [-1,1]^N$, we define
\[ B_{\mathbf V, \mathbf h}= \bigl\{f \in B ;\;  f_{|_{V_i}}\equiv h_i, \; \forall i \in \{1,\dots,N\}\bigr\}.\]

\smallskip We are going to prove the following lemma.
\begin{lemma}\label{l13}
Let $\mathbf V=(V_1,\dots ,V_N)\in\mathcal V$, and let $h\in (0,1)$. For any $\mathbf h=(h_1,\dots,h_N) \in \{-h,h\}^N$, we have 
 $\Phi _{|_{B_{\mathbf V,\mathbf h}}}=Id$.
\end{lemma}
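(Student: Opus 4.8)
\textbf{Proof plan for Lemma~\ref{l13}.}

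The plan is to show that every $f\in B_{\mathbf V,\mathbf h}$ is a fixed point of $\Phi$ by reducing to the already-established facts about $\Phi$ on functions taking finitely many values (Lemma~\ref{l7}) and on functions whose value profile on $K'$ is controlled (Lemma~\ref{l8} and its corollaries). Since all coordinates $h_i$ of $\mathbf h$ lie in $\{-h,h\}$ with $h\in(0,1)$, for any $f\in B_{\mathbf V,\mathbf h}$ we have $|f(z)|=h$ for every $z\in K'$ (because $\bigcup_i V_i\supset K'$ and each $V_i$ meets $K'$, so in fact $K'\subset\bigcup_i V_i$ and $f$ is constantly $\pm h$ near each point of $K'$); hence $u_{0,f}=h\in(0,1)$, and Lemma~\ref{l8} applies to every such $f$. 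The subtlety is that $f$ is only constrained on $\bigcup_i V_i$, not everywhere, so we cannot directly invoke Lemma~\ref{l7}; the work is to transfer fixed-point information from the ``finitely many values'' stratum into $B_{\mathbf V,\mathbf h}$.

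First I would handle the ``easy direction'': using Lemma~\ref{l8} (items 2 and 3) together with Corollary~\ref{l9}, for $f\in B_{\mathbf V,\mathbf h}$ and $x\in V_i$ we have $|f(x)|=h=u_{0,f}$, so $\Phi(f)(x)=f(x)$; thus $\Phi(f)$ agrees with $f$ on $\bigcup_i V_i$, and by continuity and density of $K\setminus K'$ in $K$ we also get $\Phi(f)=f$ on $K'$ is not yet forced — rather $\Phi(f)\in B_{\mathbf V,\mathbf h}$ too. So $\Phi$ maps $B_{\mathbf V,\mathbf h}$ into itself, and the real question is the values of $\Phi(f)$ on the \emph{finite} set $K\setminus\bigcup_i V_i$. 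Here I would argue that $\Phi$ restricted to $B_{\mathbf V,\mathbf h}$ is still a \emph{bijection} of $B_{\mathbf V,\mathbf h}$ onto itself: injectivity is inherited, and for surjectivity, given $g\in B_{\mathbf V,\mathbf h}$, its preimage $f=\Phi^{-1}(g)$ satisfies, by Lemma~\ref{l8} and Corollary~\ref{l9} applied to $\Phi^{-1}$ (via Lemma~\ref{l5} items 3--5 and the contradiction argument as in Corollary~\ref{l11}), $f(x)=g(x)$ for $x\in V_i$, hence $f\in B_{\mathbf V,\mathbf h}$.

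Next, the key step: show $\Phi$ acts trivially on the finite coordinate block. Fix $f\in B_{\mathbf V,\mathbf h}$ and a point $x_0\in K\setminus(\bigcup_i V_i)$; note $x_0\in K\setminus K'$. I would compare $f$ with the two ``corner'' functions $g,h$ obtained from $\Phi(f)$ by replacing its value at $x_0$ by $+1$ and by $-1$ respectively (and keeping it equal to $\Phi(f)$ elsewhere). These modified functions are continuous (since $x_0$ is isolated), lie in $B_{\mathbf V,\mathbf h}$, and differ from $f$ (hence from $\Phi(f)$, which equals $f$ off the finite block except possibly at finitely many isolated points) only on a finite set. One then runs an induction on the cardinality of $\{x\in K\setminus\bigcup_i V_i : \Phi(f)(x)\ne\pm1\}$, exactly parallel to the proof of Lemma~\ref{l7}: the base case is when this set is empty, where $\Phi(f)$ restricted to the finite block is extreme-like and one uses Lemma~\ref{l2} item~6 on a genuine extreme point of $B$ that agrees with $\Phi(f)$ there; the inductive step replaces one coordinate at a time by $\pm1$, uses non-expansiveness of $\Phi$ and the triangle-inequality squeeze $|f(x)-g(x)|\le\|f-g\|\le\|\Phi^{-1}(f)-g\|=|\Phi^{-1}(f)(x)-g(x)|$, and concludes $\Phi^{-1}(f)(x)=f(x)$ pointwise.

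\textbf{Main obstacle.} The delicate point is making the induction of Lemma~\ref{l7} work \emph{relative to} the constraint set $B_{\mathbf V,\mathbf h}$: the modified functions $g,h$ must stay inside $B_{\mathbf V,\mathbf h}$ (so that the induction hypothesis, which will be about $\Phi$ on $B_{\mathbf V,\mathbf h}$, can be applied to them), and one must verify that $\Phi$ and $\Phi^{-1}$ genuinely preserve $B_{\mathbf V,\mathbf h}$ before the induction can even be stated. This is why the ``easy direction'' above (that $\Phi(B_{\mathbf V,\mathbf h})\subset B_{\mathbf V,\mathbf h}$ and, via Corollary~\ref{l11}-type arguments for $\Phi^{-1}$, $\Phi^{-1}(B_{\mathbf V,\mathbf h})\subset B_{\mathbf V,\mathbf h}$) must be nailed down first; once the bijection $\Phi|_{B_{\mathbf V,\mathbf h}}$ is in hand, the rest is a careful replay of Lemma~\ref{l7}'s proof with every auxiliary function chosen inside $B_{\mathbf V,\mathbf h}$.
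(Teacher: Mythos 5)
There is a genuine gap, and it sits exactly at the point you dismiss in one clause: the inclusion $\Phi^{-1}(B_{\mathbf V,\mathbf h})\subset B_{\mathbf V,\mathbf h}$. For $g\in B_{\mathbf V,\mathbf h}$ and $x\in V_{i_0}\cap(K\setminus K')$ one has $\vert g(x)\vert=h=u_{0,g}$, i.e.\ exactly the boundary case that Corollary~\ref{l11} excludes (it requires the strict inequality $\vert g(x)\vert<u_{0,g}$, and the Remark after it points out that pushing to $\vert g(x)\vert\leqslant u_{0,g}$ would need continuity of $\Phi^{-1}$, which is not available here). Running the trichotomy of Lemma~\ref{l8} on $f=\Phi^{-1}(g)$, as you suggest, only shows that the case $\vert f(x)\vert<h$ is impossible and that $f(x)\geqslant h$ when $h_{i_0}=h$ (which Lemma~\ref{l5} already gives); nothing in Lemmas~\ref{l5}, \ref{l8} or Corollaries~\ref{l9}, \ref{l11} rules out $f(x)>h$. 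Ruling that out is the substantive content of the paper's Fact~\ref{l12}: one constructs a bespoke function $l$ equal to $1$ at $x$, taking values strictly between $h$ and $h+\frac{v}{2}$ (with $v=1-h$) elsewhere near $K'$ -- using a continuous $\chi$ with $K'=\{\chi=0\}$, which is the one place metrizability of $K$ enters -- so that Corollary~\ref{l11} applies strictly off $K'\cup\{x\}$, whence $\Phi^{-1}(l)=l$; one then checks $\Vert\Phi^{-1}(g)-l\Vert<v$ while $\Vert g-l\Vert\geqslant v$, contradicting non-expansiveness. Your proposal contains no counterpart of this construction, and without it both your "bijection of $B_{\mathbf V,\mathbf h}$ onto itself" claim and the base case of your induction (which uses $\Phi^{-1}(f)\in B_{\mathbf V,\mathbf h}$) are unsupported.

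On the other half of the argument you genuinely diverge from the paper, and there your route looks viable: once $\Phi(B_{\mathbf V,\mathbf h})=B_{\mathbf V,\mathbf h}$ is known, your relative replay of Lemma~\ref{l7} (induct on $\#\{x\in K\setminus\bigcup_i V_i:\ f(x)\neq\pm1\}$, with corner functions taken inside $B_{\mathbf V,\mathbf h}$, using Lemma~\ref{l5} item~5 off that set and the two-sided squeeze at the remaining points) does give $\Phi_{|B_{\mathbf V,\mathbf h}}=Id$, and it avoids the paper's Fact~\ref{l10}, which instead transports the problem to the finite-dimensional ball $B_{\mathbf V,\mathbf 0}$, invokes plasticity there, and uses Mankiewicz's theorem (Theorem~\ref{th2}) plus linearity and the fixed functions $h\ind_{\{x_j\}}$. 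Your version is more elementary for that step; but it does not compensate for the missing proof of the hard containment, which is where the real work of Lemma~\ref{l13} lies.
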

\begin{proof} This will follow from the next two facts.

\begin{fact}\label{l10}
Let $\mathbf V=(V_1,\dots ,V_N)\in\mathcal V$, $\mathbf h =(h_1,\dots,h_N) \in [-1,1]^N$, and assume that $ h= \min \, \bigl\{\vert h_i\vert ;\;  i \in \{1,...,N\}\bigr\} \neq 0$. 
If $\Phi (B_{\mathbf V,\mathbf h})=B_{\mathbf V, \mathbf h}$, then $\Phi _{|_{B_{\mathbf V,\mathbf h}}}= Id$.
\end{fact}
\begin{proof}[Proof of Fact \ref{l10}] For any $f\in B_{\mathbf V,\mathbf h}$, let us define a function $J(f):K\to\R$ as follows: 
\[ J(f)(x)=f(x)\;\qquad\hbox{for all}\;x \notin \displaystyle\bigcup_{i=1}^{N}V_i,\]
and 
\[J(f)_{|_{V_i}} \equiv 0\;\qquad\hbox{for}\; i=1,\dots, N.\]

\noindent
Then, $J(f)\in B_{\mathbf V, \mathbf 0}$ where $\mathbf 0=(0,\dots ,0)$. Moreover, it is clear that 
$J:  B_{\mathbf V, \mathbf h}  \to  B_{\mathbf V,\mathbf 0} $ is an isometric bijection.

\smallskip
Since $\Phi  : B_{\mathbf V,\mathbf h} \rightarrow B_{\mathbf V,\mathbf h}$ is a bijection, we can define a bijective  map $\Psi _0 :B_{\mathbf V, \mathbf 0} \rightarrow B_{\mathbf V, \mathbf 0} $ as follows: 
\[ \Psi _0\bigl(J(f)\bigr)=J\bigl(\Phi (f)\bigr)\qquad\hbox{for all}\; f \in B_{\mathbf V, \mathbf h}.\]

\noindent
 Moreover, since $J$ is an isometry and $\Phi $ is non-expansive, $\Psi _0$ is non-expansive.
Now, $B_{\mathbf V,\mathbf 0}$ is the unit ball of the finite-dimensional space \[ X_{\mathbf V,\mathbf 0}=\bigl\{f \in C(K) ;\;  f_{|_{V_i}}\equiv 0, \; \forall i \in \{1,\dots,N\}\bigr\} ,\] so $B_{\mathbf V, \mathbf 0}$ is plastic. Therefore, $\Psi _0$ is in fact an isometry. Hence, by Theorem~\ref{th2},  $\Psi _0$ extends to a linear isometric bijection $\widetilde\Psi_0:X_{\mathbf V,\mathbf 0}\to X_{\mathbf V, \mathbf 0}$. \\
Let  \[ M=\Bigl\{x\in K\setminus K' ;\;  x \notin \bigcup_{i=1}^{N}V_i\Bigr\}.\] Since $K'\subset \bigcup_{i=1}^N V_i$, the set $M$ is finite. We write $M= \{x_1,\dots,x_n\}$.\\
Now, let $f \in B_{\mathbf V,\mathbf h}$, and let us show that $\Phi(f)=f$. \\
Since $h\neq 0$, for each $j \in \{1,\dots,n\}$ we may write $f(x_j)=\alpha_jh$ where $\alpha_j \in \R$.  Then,
\[ J(f)= \displaystyle\sum_{j=1}^{n}\alpha_j h\ind_{\{x_j\}}.\]
For each $j \in \{1,\dots,n\}$, we define $f_j \in B_{\mathbf V, \mathbf h}$ as follows:
\[\begin{aligned}
\begin{cases}
f_{j_{|_{V_i}}}\equiv h_i & \text{for }i = 1,\dots,N,\\
f_j(y)=0 &\text{for all } y \notin \biggl(\displaystyle\bigcup_{i=1}^{N}V_i\cup \{x_j\}\biggr), \\
f_j(x_j)=h.
\end{cases}
\end{aligned}\]

\noindent Note that $J(f_j)=h \ind_{\{x_j\}}$, for each $j \in \{1,\dots,n\}$. Moreover, 
by Lemma~\ref{l8} and since we are assuming that $\Phi (B_{\mathbf V, \mathbf h})=B_{\mathbf V,\mathbf h}$, we see that $\Phi (f_j)=f_j$ for all $j \in \{1,\dots,n\}$: indeed, we have $\Phi(f_j)\equiv h_i \equiv f_j$ on each set $V_i$, and $\Phi(f_j)(x)=f_j(x)$ for all $x\in K\setminus \bigcup_{i=1}^N V_i$ by Lemma~\ref{l8}.  
Hence, using the linearity of $\widetilde{\Psi}_0$, we get
\[\begin{aligned}
\Psi _0(J(f)) &= \widetilde\Psi  _0\biggl(\displaystyle\sum_{j=1}^{n}\alpha_j h\ind_{\{x_j\}}\biggr)\\
&=\displaystyle\sum_{j=1}^{n}\alpha_j\Psi _0\bigl(h \ind_{\{x_j\}}\bigr)\\&= \displaystyle\sum_{j=1}^{n}\alpha_j J\bigl(\Phi (f_j)\bigr)\\&= \displaystyle\sum_{j=1}^{n}\alpha_j J(f_j) \\&=\displaystyle\sum_{j=1}^{n}\alpha_j h \ind_{\{x_j\}}=J(f).
\end{aligned}\]
It follows that $J(\Phi (f))=J(f)$ and this implies that $\Phi (f)=f$. So we have shown that  $\Phi  _{|_{B_{\mathbf V, \mathbf h}}}=Id$.
\end{proof}
\begin{fact}\label{l12}
Let $\mathbf V=(V_1,\dots ,V_N)\in\mathcal V$ and $h\in (0,1)$. For any $\mathbf h =(h_1, \dots, h_N) \in \{-h,h\}^N$, we have 
 $\Phi ^{-1}(B_{\mathbf V, \mathbf h}) \subset B_{\mathbf V, \mathbf h}$.
\end{fact}
\begin{proof}[Proof of Fact \ref{l12}]
Let $g \in B_{\mathbf V, \mathbf h}$. Our goal is to show that $\Phi  ^{-1}(g) \in B_{\mathbf V, \mathbf h}$, \textit{i.e.} $\Phi  ^{-1}(g)_{|_{V_i}}\equiv h_i$, for every $ i \in \{1,\dots,N\}$. \\
Let $i_0 \in \{1,\dots,N\}$ and let  $x \in K\setminus K'$ be such that $x \in V_{i_0}$. 
We want to show that $\Phi ^{-1}(g)(x)= h_{i_0}$. 

\smallskip\noindent
Suppose for example that $h_{i_0} >0$, \textit{i.e.} $h_{i_0}=h$. 

\smallskip\noindent
Since $x \in V_{i_0}$, we have $g(x)=h$. 
Moreover, since $h>0$, Lemma~\ref{l5} implies that $\Phi  ^{-1}(g)(x) \geqslant h$. 
So, it remains to show that $\Phi ^{-1} (g)(x) \leqslant h$. 
For the sake of contradiction, suppose that $\Phi ^{-1} (g)(x) > h$. \\
Since $K$ is metrizable, the closed set $K'$ is also $G_\delta$, and hence it is a ``zero set''; so one can choose a continuous function $\chi : K\to\R$ such that 
\[ 0\leqslant \chi(z) <\frac12\quad\hbox{for all $z\in K$}\qquad{\rm and}\qquad K'=\{ z\in K;\; \chi(z)=0\}.\] 

\noindent Finally, 
let us denote $1-h$ by $v$.

\smallskip\noindent
We define $l\in B$ as follows: \[
\begin{aligned}
   l(y)= \begin{cases}
   1 &\text{if }y=x,\\
   \mathrm{sgn}(h_i)\Bigl(h+\frac{v}{2}\bigl(1-\chi(y)\bigr)\Bigr)  &\text{if }y\in V_i\quad\hbox{for some $i$ and}\; y\neq x,\\
     g(y)  &\text{if }\vert g(y)\vert <h, \\
    \mathrm{sgn}\bigl(g(y)\bigr)\left(h+\frac{v}{4}\right) &\text{if }y \notin \displaystyle\bigcup_{i=1}^{N}V_i\quad{\rm and}\; \vert g(y)\vert \geqslant h.
  \end{cases} 
\end{aligned}\]
The function $l$ is indeed continuous since the $V_i$ are clopen and $\bigcup_{i=1}^N V_i\supset K'$.\\
For each $i \in \{1,\dots,N\}$ and $y \in K\setminus (K' \cup \{x\})$ such that $y \in V_i$, we have: 
\[
   0<\chi(y)<\frac12 \implies h+\frac{v}{4}<h+\frac{v}{2}\Bigl(1-\chi(y)\Bigr)<h+\frac{v}{2}\cdot
\]

\noindent 
So we see that $\vert l(y)\vert < h + \frac{v}{2}$ for every $ y \in K \minus (K'\cup \{x\})$. Since $\vert l(z)\vert =h+\frac{v}{2}$ for all $z\in K'$, Corollary~\ref{l11} then implies that $\Phi ^{-1}(l)(y)=l(y)$ for all $y \in  K \minus (K'\cup \{x\})$. Since $K\setminus K'$ is dense in $K$, it follows that $\Phi ^{-1}(l)(y)=l(y)$ for all $y \neq x$. Moreover $l(x)=1$, so Lemma~\ref{l5} implies that $\Phi ^{-1}(l)(x)=1$. Thus, we have proved that
\[ \Phi^{-1}(l)=l.\]

\smallskip\noindent
Now, we are going to show that \[ \Vert \Phi^{-1}(g)-l\Vert <v,\] from which we will easily get a contradiction. Denote $\Phi^{-1}(g)$ by $g'$.\\
First, recall that we are assuming that $g'(x) >h$, so $\vert g'(x)-l(x)\vert =1-g'(x)<1-h=v$. \\
Next, let us show that $\vert g'(y)-l(y)\vert\leqslant\frac34 v$ for every $y \in K \minus (K'\cup \{x\})$. \\
Let $y \in K \minus (K'\cup \{x\})$. \\ 
- If $\vert g(y)\vert <h$, then Corollary~\ref{l11} implies that $g'(y)=g(y)$. Hence, $\vert g'(y)-l(y)\vert =0$.\\
- Assume that $y \in V_i$ for some $i$ and $g_{|_{V_i}}\equiv h$. Then $g(y)=h$, and Lemma~\ref{l5} implies that $g'(y) \in [h, 1]$. Moreover, $h+\frac{v}{4}< l(y)< h+\frac{v}{2}\cdot$ So, we see that if $g'(y)\leqslant l(y)$, then $\vert g'(y)-l(y)\vert =l(y)-g'(y)\leqslant h + \frac{v}{2}-h = \frac{v}{2};$ whereas 
if $g'(y)>l(y)$, then $\vert g'(y)-l(y)\vert =g'(y)-l(y)\leqslant 1-h- \frac{v}{4}= \frac{3}{4}v.$ \\
- Similarly, one gets $\vert g'(y)-l(y)\vert\leqslant\frac34 v$ when $y \in V_i$ and $g_{|_{V_i}}\equiv-h$.\\
- Assume that $y \notin \bigcup _{i=1}^{N}V_i$ and $g(y) \leqslant -h$. 
Then, $g'(y) \in [-1,-h]$ (from Lemma~\ref{l5}) and $l(y)=-h-\frac{v}{4}\cdot$ So, if $g'(y) \geqslant l(y)$ then $\vert g'(y)-l(y)\vert =g'(y)-l(y)\leqslant -h +h + \frac{v}{4}=\frac{v}{4};$ and if 
 $g'(y)<l(y)$ then $\vert g'(y)-l(y)\vert = l(y)-g'(y) \leqslant-h-\frac{v}{4}+1 = \frac{3}{4}v.$ \\
- Finally, one gets in the same way that $\vert g'(y)-l(y)\vert\leqslant \frac34 v$ when $y \notin \bigcup _{i=1}^{N}V_i$ and $g(y) \geqslant h$.

\smallskip\noindent
Hence, we have shown that $\vert g'(y)-l(y)\vert \leqslant \frac{3}{4}v$ for every $ y \in K \minus (K'\cup \{x\})$. Since $K\setminus K'$ is dense in $K$, 
it follows that $\vert g'(y)-l(y)\vert \leqslant \frac{3}{4}v$ for all $y \neq x$; and hence we get $\Vert g'-l\Vert <v$ since we observed above that $\vert g'(x)-l(x)\vert <v$.

\smallskip\noindent
It follows that
\[ v=1-h=l(x)-g(x) \leqslant \Vert l-g\Vert \leqslant \Vert \Phi^{-1}(l)-\Phi^{-1}(g)\Vert =\Vert l-\Phi^{-1}(g)\Vert <v,\]
which is a contradiction.

\smallskip\noindent
The case $h_{i_0} <0$ is analogous. 

\smallskip\noindent
So, we have shown that $\Phi ^{-1}(g)(x)= h_{i}$ for all $i \in \{1, \dots, N \}$ and every $x\in (K\setminus K')\cap V_i$. And since $K \minus K' $ is dense in $K$, it follows that $\Phi  ^{-1}(g)_{|_{V_i}}\equiv h_i$, for all $ i \in \{1,\dots,N\}$.
\end{proof}

\smallskip
It is now easy to conclude the proof of Lemma~\ref{l13}. On the one hand, since $\vert h_i\vert=h$ for all $i\in\{ 1,\dots ,N\}$, Lemma~\ref{l8} (together with the density of $K\setminus K'$) implies that $\Phi  (B_{\mathbf V,\mathbf h}) \subset B_{\mathbf V,\mathbf h}$. On the other hand, 
Fact~\ref{l12} implies that $\Phi  ^{-1}(B_{\mathbf V,\mathbf h}) \subset B_{\mathbf V,\mathbf h}$. So we have $\Phi  (B_{\mathbf V,\mathbf h}) = B_{\mathbf V, \mathbf h}$; and hence  $\Phi _{|_{B_{\mathbf V, \mathbf h}}}=Id$ by Fact~\ref{l10}.
\end{proof}

\subsection{Proof of the main lemma: $n=1$} In this subsection, we prove Lemma~\ref{l17} in the case $n=1$. (We need to treat this case separately because our proof of the inductive step from $n$ to $n+1$ works only if $n\geqslant 1$.) Let us recall the statement.

\begin{lemma} Let $f \in B$ be such that $u_{0,f} \notin \{0,1\}$. For any $x \in K\setminus K'$,
\begin{enumerate}[label={\rm \arabic*)}]
\item if $\vert f(x)\vert < u_{1,f}$, then $\Phi (f)(x)=f(x)$,
\item if $f(x) \geqslant u_{1,f}$, then $\Phi (f)(x) \in [u_{1,f},f(x)]$, 
\item if $f(x) \leqslant -u_{1,f}$, then $\Phi (f)(x) \in [f(x),-u_{1,f}]$. 
\end{enumerate}
\end{lemma}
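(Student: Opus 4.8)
The plan is to deduce all three items simultaneously from a single lower bound, obtained by comparing $f$ with a function on which $\Phi$ is already known to be the identity via Lemma~\ref{l13} — this is exactly what the ``digression'' was prepared for. Write $h:=u_{0,f}$, so that $h\in(0,1)$ by hypothesis, $u_{1,f}=\tfrac{1+h}{2}$ and $1-u_{1,f}=\tfrac{1-h}{2}$; put $M:=\max\{|f(z)|;\ z\in K'\}\in[h,1]$ and $h':=\tfrac{h+M}{2}$, noting that $h'\in(0,1)$ since $h<1$. Fix $x\in K\setminus K'$. If $f(x)=0$ we are done by Lemma~\ref{l2}, and the case $f(x)<0$ is entirely symmetric (replace $g(x)=1$ by $g(x)=-1$ below), so I assume $f(x)>0$; then only items 1) and 2) are relevant. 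Since Lemma~\ref{l5} already gives $\Phi(f)(x)\in[0,f(x)]$, both conclusions will follow once I show
\[ \Phi(f)(x)\ \geqslant\ \min\bigl(f(x),\,u_{1,f}\bigr). \]

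To prove this, fix a small $\varepsilon>0$ and, exactly as in the proof of Lemma~\ref{l8}, choose pairwise disjoint clopen sets $V_1,\dots,V_N$ with $\bigcup_i V_i\supset K'$, each meeting $K'$, none containing $x$, on each of which $f$ has constant sign and $\mathrm{diam}\,f(V_i)<\varepsilon$; then $K\setminus\bigcup_i V_i$ is finite, being closed without accumulation points. Pick $z_i\in K'\cap V_i$ and define $g\in B$ to equal $\mathrm{sgn}(f(z_i))\,h'$ on each $V_i$, to equal $1$ at $x$, and to equal $f$ on the finitely many remaining (isolated) points; this $g$ is continuous because the $V_i$ are clopen and cover $K'$. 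Then $g\in B_{\mathbf V,\mathbf h}$ with $\mathbf V=(V_1,\dots,V_N)\in\mathcal V$ and $\mathbf h\in\{-h',h'\}^N$, so $\Phi(g)=g$ by Lemma~\ref{l13}. A short computation gives $\Vert f-g\Vert\leqslant\max\bigl(1-f(x),\ \tfrac{M-h}{2}+\varepsilon\bigr)$: indeed $f-g$ vanishes off $\bigcup_i V_i\cup\{x\}$, $|f(x)-g(x)|=1-f(x)$, and for $y\in V_i$ one has $|f(y)-g(y)|=\bigl||f(y)|-h'\bigr|\leqslant\tfrac{M-h}{2}+\varepsilon$ since $|f(y)|$ lies within $\varepsilon$ of $|f(z_i)|\in[h,M]$. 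As $\Phi$ is non-expansive and $\Phi(g)=g$, this yields $\Phi(f)(x)\geqslant g(x)-\Vert f-g\Vert=1-\Vert f-g\Vert$. Using $\tfrac{M-h}{2}\leqslant\tfrac{1-h}{2}=1-u_{1,f}$: if $f(x)\geqslant u_{1,f}$ then $1-f(x)\leqslant\tfrac{1-h}{2}$, so $\Phi(f)(x)\geqslant u_{1,f}-\varepsilon$ and, $\varepsilon$ being arbitrary, $\Phi(f)(x)\geqslant u_{1,f}$; if $0<f(x)<u_{1,f}$ then $1-f(x)>\tfrac{1-h}{2}\geqslant\tfrac{M-h}{2}$, so for $\varepsilon$ small enough the maximum equals $1-f(x)$ and $\Phi(f)(x)\geqslant f(x)$. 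This is the desired inequality.

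The one real obstacle is that $f$ may take values of modulus larger than $u_{1,f}$ on $K'$, so the crude comparison function of Lemma~\ref{l8} (equal to $\pm h$ near $K'$) would move $f$ there by as much as $1-h$, far too much to conclude anything sharper than the $n=0$ case. Choosing the ``height'' of $g$ near $K'$ to be the midpoint $h'=\tfrac{u_{0,f}+M}{2}$ is precisely what keeps $\Vert f-g\Vert$ below $1-u_{1,f}$ (up to $\varepsilon$) while still making $g$ take only the two values $\pm h'$ on clopen neighbourhoods of $K'$, so that Lemma~\ref{l13} applies. Everything else is routine bookkeeping: justifying the choice of the $V_i$ as in Lemma~\ref{l8}, checking the norm estimate term by term, and running the symmetric argument with $g(x)=-1$ when $f(x)<0$.
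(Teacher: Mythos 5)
Your proof is correct, and while it rests on the same three pillars as the paper's argument (Lemma~\ref{l13} applied to a comparison function that is constant of fixed modulus on clopen neighbourhoods of $K'$, non-expansiveness, and Lemma~\ref{l5} for the reverse inequality), the execution is genuinely different and more economical. The paper treats item 1) and items 2)--3) by two separate constructions: for 1) it first assumes $f\in B_{\mathbf V,\mathbf h}$ and uses a plateau at height $\vert f(x)\vert$ (getting the exact equality $\Vert f-g\Vert=1-\vert f(x)\vert$ from the inequality $\vert f(x)\vert-u_{0,f}<1-\vert f(x)\vert$), then removes the simplicity assumption by approximating $f$ by functions with finitely many values; for 2) it uses a plateau at height $u_{1,f}$ with an $\varepsilon$-slack coming from $\mathrm{diam}\,f(V_i)<\varepsilon$. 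Your single plateau at the midpoint height $h'=\tfrac{u_{0,f}+M}{2}$ of the range of $\vert f\vert$ on $K'$ gives the one estimate $\Vert f-g\Vert\leqslant\max\bigl(1-\vert f(x)\vert,\ \tfrac{M-u_{0,f}}{2}+\varepsilon\bigr)$, and since $\tfrac{M-u_{0,f}}{2}\leqslant\tfrac{1-u_{0,f}}{2}=1-u_{1,f}$ this simultaneously yields 1) (when $\vert f(x)\vert<u_{1,f}$ the maximum is $1-\vert f(x)\vert$ for small $\varepsilon$) and 2)--3) (when $\vert f(x)\vert\geqslant u_{1,f}$ it is at most $1-u_{1,f}+\varepsilon$), so you avoid the case split and the approximation-by-simple-functions step entirely. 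What the paper's heavier structure buys is alignment with the rest of the induction: its item-1 argument (plateau matched to $\vert f(x)\vert$, membership in some $B_{\mathbf V',\mathbf h'}$, then approximation) is exactly the template reused in Steps 4--5 of the inductive passage from $n$ to $n+1$, where your midpoint trick would not directly give the sharper threshold $u_{n+1,f}$; but as a proof of the $n=1$ case as stated, your argument is complete and slightly cleaner.
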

\begin{proof} 1) Let us first assume that $f \in B_{\mathbf V,\mathbf h}$ for some $\mathbf V=(V_1,\dots ,V_N)\in\mathcal V$ and some  $\mathbf h=(h_1,\dots ,h_N) \in [-1,1]^N$ such that $h =\min \, \bigl\{\vert h_i \vert; \; i \in \{1,\dots, N\}\bigr\} \notin \{0,1\}$. 
Note that $h = u_{0,f}$. 

\smallskip\noindent
Let $x \in \ K\setminus K'$ be such that $\vert f(x)\vert  < u_{1,f} $. \\
If $\vert f(x)\vert \leqslant u_{0,f}$, then Lemma~\ref{l8} implies that $\Phi (f)(x)=f(x)$. So we may suppose 
that $\vert f(x)\vert >u_{0,f}$. \\
Let us define $g \in B$ as follows: 
\[\begin{aligned}
  g(y) = \begin{cases}
  \mathrm{sgn}\bigl(f(x)\bigr) &\text{if }y=x, \\
  \mathrm{sgn}(h_i)\,\vert f(x)\vert  &\text{if }y \in V_i\quad\hbox{for some $i$ and}\; y \neq x,\\
  f(y) &\text{if } y \notin \biggl(\displaystyle\bigcup_{i=1}^{N}V_i\cup \{x\}\biggr).
  \end{cases} 
\end{aligned}\]
Lemma~\ref{l13} implies that $\Phi (g)=g$. 

\smallskip\noindent
Let us show that $\Vert f-g\Vert =1-\vert f(x)\vert$. \\
First, we see that $\vert g(x)-f(x)\vert =1-\vert f(x)\vert $. \\
Now, let $y \in K \minus \{x\}$.  If $ y \notin \bigcup_{i=1}^{N}V_i $, then $\vert f(y)-g(y)\vert =0$. 
If $y \in V_i$ for some $i$, two cases may occur: $\vert f(x)\vert < \vert h_i\vert $ or $\vert f(x)\vert \geqslant \vert h_i\vert $. \\
- If $\vert f(x)\vert  < \vert h_i\vert $, then $\vert f(y)-g(y)\vert =\vert h_i\vert -\vert f(x)\vert \leqslant1-\vert f(x)\vert $.\\
- If $\vert f(x)\vert  \geqslant \vert h_i\vert$, then $\vert f(y)-g(y)\vert =\vert f(x)\vert -\vert h_i\vert \leqslant\vert f(x)\vert -u_{0,f}$. Moreover, since $\vert f(x)\vert < u_{1,f}= \frac{1+u_{0,f}}{2}$, we have $\vert f(x)\vert -u_{0,f}< 1-\vert f(x)\vert $. 
Hence, $\vert f(y)-g(y)\vert \leqslant 1-\vert f(x)\vert $.

\smallskip\noindent
So we have indeed shown that $\Vert f-g\Vert =1-\vert f(x)\vert $. Hence, \begin{align}\label{ineqphi1}
    \vert \Phi (f)(x)-\Phi (g)(x)\vert \leqslant \Vert \Phi (f)-\Phi (g)\Vert \leqslant \Vert f-g\Vert =1-\vert f(x)\vert . 
\end{align}
Since $\mathrm{sgn}\bigl(\Phi (f)(x)\bigr)= \mathrm{sgn}\bigl(f(x)\bigr)$ (from Lemma~\ref{l8}) and $\Phi (g)=g$,
\begin{center}
    $(\ref{ineqphi1}) \implies 1-\vert \Phi (f)(x)\vert \leqslant1-\vert f(x)\vert  \implies \vert \Phi (f)(x)\vert \geqslant\vert f(x)\vert $.
\end{center}
Moreover, Lemma~\ref{l5} implies that $\vert \Phi (f)(x)\vert \leqslant\vert f(x)\vert $. \\
So we have $\vert \Phi (f)(x)\vert =\vert f(x)\vert $ and $\mathrm{sgn}\bigl(\Phi (f)(x)\bigr)= \mathrm{sgn}\bigl(f(x)\bigr)$, \mbox{\it i.e.} $\Phi (f)(x)=f(x)$.

\medskip
Now, we can easily prove 1) in the general case, approximating $f$ by functions taking only finitely many values. 
Let us fix $x \in \ K\setminus K'$ such that $\vert f(x)\vert  < u_{1,f} $. \\
Since $K$ is zero-dimensional, for any $m\in\N$ one can find $g_m\in B$ taking only finitely many values such that $\Vert f-g_m\Vert \leqslant\frac{1}{m}$, $f(x)=g_m(x)$ and $u_{0,g_m}=u_{0,f}$. \\ Then  $u_{1,g_m}=u_{1,f}$ and hence $\vert g_m(x)\vert =\vert f(x)\vert < u_{1,g_m}$. So  $\Phi (g_m)(x)=g_m(x)=f(x)$, by the special case we have already treated (since $g_m$ takes only finitely many values, it belongs to $B_{\mathbf V,\mathbf h}$ for some pair $(\mathbf V,\mathbf h)$). \\
Hence, for any $ m \in \N$, $\vert \Phi (f)(x)-\Phi (g_m)(x)\vert \leqslant\Vert \Phi (f)- \Phi (g_m)\Vert \leqslant \Vert f-g_m\Vert \leqslant\frac{1}{m}\cdot$ It follows that for any $ m \in \N$, $\vert \Phi (f)(x)-f(x)\vert \leqslant \frac{1}{m}$; so we obtain $\Phi (f)(x)=f(x)$ by letting $m$ tend to infinity.

\medskip
2) Since $f(x) \geqslant   u_{1,f}>0$, Lemma~\ref{l5} implies that $\Phi (f)(x) \leqslant f(x)$. So it remains to show that $\Phi (f)(x) \geqslant  u_{1,f}$.\\
Let $\varepsilon >0$ be arbitrary, and let us choose  clopen sets $V_1,\dots ,V_N$ as in the proof of Lemma~\ref{l8}; that is, 
\begin{itemize}
\item[-] $V_i\cap K'\neq\emptyset$ for all $i$ and $\bigcup_{i=1}^N V_i\supset K'$,
\item[-] the $V_i$ are pairwise disjoint and do not contain $x$,
\item[-] $f$ has constant sign $\varepsilon_i$ on each $V_i$,
\item[-] ${\rm diam}\bigl(f(V_i)\bigr)<\varepsilon$ for $i=1,\dots ,N$.
\end{itemize}

\noindent
We define $g \in B$ as follows: 
\[ \begin{aligned}
g(y) = \begin{cases}
1 &\text{if }y=x, \\
\varepsilon_i u_{1,f} &\text{if } y \in V_i\quad\hbox{for some}\; i \in \{1,\dots,N\},\\
f(y) &\text{if } y \notin \biggl(\displaystyle\bigcup _{i=1}^{N}V_i\cup \{x\}\biggr).
\end{cases}
\end{aligned}\]

\noindent
Lemma~\ref{l13} implies that $\Phi (g)=g$. 

\smallskip\noindent
Let us show that $\Vert f-g\Vert \leqslant 1-u_{1,f}+\varepsilon$. 

\smallskip\noindent
First, we have $\vert f(x)-g(x)\vert =1-f(x)\leqslant 1-u_{1,f}$. \\
Now, let $y \in K \minus \{x\}$. \\
If $y \notin \bigcup _{i=1}^{N}V_i$, then $\vert f(y)-g(y)\vert =0$. \\
If $y \in V_i$ for some $i$,  choose $z\in V_i\cap K'$. Then $\vert f(y)-f(z)\vert <\varepsilon$ and $\mathrm{sgn}\bigl(f(z)\bigr)=\varepsilon_i$.\\ Hence, \[
\begin{aligned}
\vert f(y)-g(y) \vert &\leqslant \vert f(y)-f(z)\vert + \vert f(z)-g(y)\vert \\&<\varepsilon + \vert f(z)-g(y)\vert .
\end{aligned}\]
It follows that if $\vert f(z)\vert  <u_{1,f}$, then $\vert f(z)-g(y)\vert = u_{1,f}-\vert f(z)\vert \leqslant u_{1,f}-u_{0,f} =1-u_{1,f}$; whereas if $\vert f(z)\vert  \geqslant u_{1,f}$, then $\vert f(z)-g(y)\vert =\vert f(z)\vert -u_{1,f}\leqslant 1-u_{1,f}$. In either case, we have $\vert f(y)-g(y)\vert < 1-u_{1,f}+ \varepsilon$. 

\smallskip\noindent So, we have shown that $\Vert f-g\Vert \leqslant  1-u_{1,f}+ \varepsilon$.\\
Since $\Phi $ is non-expansive, we obtain
\[\begin{aligned}
\Vert \Phi (f)-g\Vert =\Vert \Phi (f)-\Phi (g)\Vert  \leqslant1-u_{1,f}+\varepsilon &\implies \vert \Phi (f)(x)-g(x)\vert \leqslant1-u_{1,f} + \varepsilon \\&\implies 1-\Phi (f)(x) \leqslant1-u_{1,f} + \varepsilon \\&\implies \Phi (f)(x) \geqslant u_{1,f} -\varepsilon.
\end{aligned}\]
Since $\varepsilon>0$ is arbitrary, it follows that $\Phi (f)(x) \geqslant u_{1,f}$. 

\medskip
3) The proof is similar to that of 2).
\end{proof}

\subsection{Proof of the main lemma: general case} In this subsection, we prove Lemma~\ref{l17} by induction on $n\in\Z_+$. 

\medskip
We already know that the result holds true for $n=0$ and $n=1$. Let us now fix $n\geqslant 1$. Assume that Lemma~\ref{l17} has been proved for $n$, and let us prove it for $n+1$.

\medskip\noindent
\begin{step1}  For each $g \in B$ such that $u_{0,g}\notin \{0,1\}$ and $x \in K\setminus K'$, if $\vert g(x)\vert < u_{n,g}$, then $\Phi ^{-1}(g)(x)=g(x)$.
\end{step1}
\begin{proof}
Let $g \in B$ be such that $u_{0,g}\notin \{0,1\}$, and let $x \in K\setminus K'$ be such that $\vert g(x)\vert < u_{n,g}$. \\
Denote $\Phi ^{-1}(g)$ by $f$.\\
Since $u_{0,f}=u_{0,g}$ (from Corollary~\ref{l9}), we have $u_{0,f} \notin \{0,1\}$ and $u_{n,f}=u_{n,g}$. \\
Three cases may occur: $\vert f(x)\vert <u_{n,f}$, $f(x)\geqslant u_{n,f}$ or $f(x) \leqslant- u_{n,f}$.\\
If $f(x) \geqslant u_{n,f}$, then the induction hypothesis implies that $g(x) \geqslant u_{n,f}=u_{n,g}$, which is a contradiction. Similarly, 
if $f(x) \leqslant -u_{n,f}$ then $g(x) \leqslant -u_{n,f}=-u_{n,g}$, which is again a contradiction. Hence, $\vert f(x)\vert  <u_{n,f}$. By the induction hypothesis, it follows that $f(x)=g(x)$, \textit{i.e.} $\Phi ^{-1}(g)(x)=g(x)$.
\end{proof}

\smallskip
\begin{step2}  Let $\mathbf V=(V_1,\dots ,V_N)\in\mathcal V$ and let $\mathbf h=(h_1, \dots,h_N) \in [-1,1]^N$. Assume that $h= \min \, \bigl\{\vert h_i\vert  ;\;  i \in \{1,\dots,N\}\bigr\} \notin \{0,1\}$, and that $\vert h_i\vert \leqslant\frac{n+h}{n+1}$ for all $ i \in \{1,\dots,N\}$.  
Then, $\Phi ^{-1}(B_{\mathbf V,\mathbf h}) \subset B_{\mathbf V, \mathbf h}$. 
\end{step2}
\begin{proof} Let $g \in B_{\mathbf V,\mathbf h}$. Our goal is to show that $\Phi ^{-1}(g)_{|_{V_i}}\equiv h_i$, for every $i \in \{1,\dots,N\}$. And since $K\setminus K'$ is dense in $K$, it is in fact enough to show that $\Phi^{-1}(g)(x)=h_i$ for all $i$ and every $x\in (K\setminus K')\cap V_i$.\\
Note that \[  u_{0,g}=h \qquad{\rm and}\qquad u_{n,g} = \frac{n+h}{n+1}\cdot\]
Let $i_0 \in \{1,\dots,N\}$ and let $x\in (K\setminus K')\cap V_{i_0}$. Then, $g(x)=h_{i_0}$. 
We want to show that $\Phi ^{-1}(g)(x)=h_{i_0}$. \\
If $\vert h_{i_0}\vert < \frac{n+h}{n+1}= u_{n,g}$, then Step~1 implies that $\Phi ^{-1}(g)(x)=g(x)=h_{i_0}$. \\
If $\vert h_{i_0}\vert = \frac{n+h}{n+1}$, suppose for example that $h_{i_0} >0$, \textit{i.e.} $g(x)=h_{i_0}= \frac{n+h}{n+1}\cdot$ \\
Since $g(x)>0$, Lemma~\ref{l5} implies that $\Phi ^{-1}(g)(x) \geqslant g(x) = \frac{n+h}{n+1}\cdot$ So, it remains to show that $\Phi ^{-1}(g)(x) \leqslant \frac{n+h}{n+1}\cdot$ \\
For the sake of contradiction, suppose that $\Phi ^{-1}(g)(x) > \frac{n+h}{n+1}\cdot$ \\
Let us set 
\[ u= \frac{n+h}{n+1}\qquad\hbox{and}\qquad v=1-u.\]
We note that \begin{align}\label{trucmachin}\frac{(2n+1)h+1}{2(n+1)}<u. \end{align}
Indeed, \[\frac{(2n+1)h+1}{2(n+1)}-u= \frac{(2n+1)h+1}{2(n+1)}- \frac{n+h}{n+1}= \frac{(2n-1)(h-1)}{2(n+1)}<0.\]
Moreover, since $h <1$, we have $h <\frac{(2n+1)h+1}{2(n+1)}\cdot$\\
Now, let us define $l \in B $ as follows: 
\[\begin{aligned}
l(y) = \begin{cases}
1 &\text{if }y=x, \\
g(y) &\text{if } \vert g(y)\vert <h\\&\text{or } \frac{(2n+1)h+1}{2(n+1)} \leqslant\vert g(y)\vert <u, \\
\mathrm{sgn}\bigl(g(y)\bigr) \frac{(2n+1)h+1}{2(n+1)} &\text{if }  h \leqslant \vert g(y)\vert < \frac{(2n+1)h+1}{2(n+1)}\comma\\
\mathrm{sgn}\bigl(g(y)\bigr)\Bigl(u+\frac{v}{3(n+1)}\Bigr) &\text{if }\vert g(y)\vert \geqslant u \;{\rm and}\; y \neq x.
\end{cases}
\end{aligned} \]

\smallskip\noindent
Note that $l$ is constant on $V_{i_0}\minus \{x\}$ and on each set $V_i$ for $i \neq i_0$, since $g \in B_{\mathbf V,\mathbf h}$. If we choose $z_i\in V_i \minus \{x\}$ for $i=1,\dots ,N$, then $l\in B_{\mathbf V', \mathbf h'}$ where $\mathbf V'=(V'_1, \dots , V'_N)$, $V'_i=V_i$ for all $i \neq i_0$, $V'_{i_0}=V_{i_0}\minus \{x\}$ and $\mathbf h'=(l(z_1),\dots ,l(z_N))$.\\
Note also that $u_{0,l}= \frac{(2n+1)h+1}{2(n+1)}\cdot$ 
Hence, $u_{n,l}=\frac{n+u_{0,l}}{n+1}= \frac{2n^2+2n+2nh+h+1}{2(n+1)^2}\cdot$\\
Moreover, $u+\frac{v}{2(n+1)}= u +\frac{1-u}{2(n+1)}= \frac{2n^2+2n+2nh+h+1}{2(n+1)^2}\cdot$ So, we get 
\[ u_{n,l}=u +\frac{v}{2(n+1)}\cdot\]
Now, we have $\vert l(y)\vert < u_{n,l}$, for all $y\neq x$. 
Then, Step~1 implies that $\Phi ^{-1}(l)(y)=l(y)$, for all $ y \in K \minus (K'\cup \{x\})$. And since $K\setminus K'$ is dense in $K$, it follows that $\Phi ^{-1}(l)(y)=l(y)$ for all $ y \neq x$. \\
Moreover, since $l(x)=1$, Lemma~\ref{l5} implies that $\Phi ^{-1}(l)(x)=1$. So we have shown that
\[ \Phi ^{-1}(l)=l.\]

\smallskip\noindent
Now, we are going to show that $\Vert \Phi^{-1}(g)-l\Vert <v$, from which we will obtain a contradiction. Denote $\Phi^{-1}(g)$ by $g'$.\\
First, note that according to our assumption, we have $g'(x) >u$; so $\vert g'(x)-l(x)\vert =1-g'(x)<1-u=v$. \\
Now, let $y \in K \minus (K'\cup \{x\})$. \\
- If $\vert g(y)\vert <h$ or $\frac{(2n+1)h+1}{2(n+1)} \leqslant\vert g(y)\vert <u$, then $\vert g(y)\vert <u=u_{n,g}$ and Step~1 implies that $g'(y)=g(y)$. Hence, $\vert g'(y)-l(y)\vert =0$.\\
- If $h \leqslant\vert g(y)\vert < \frac{(2n+1)h+1}{2(n+1)}$ then (\ref{trucmachin}) implies that $\vert g(y)\vert <u=u_{n,g}$. 
Hence, Step~1 implies that $g'(y)=g(y)$. So we get
\[ \begin{aligned}
\vert l(y)-g'(y)\vert &= \vert l(y)-g(y)\vert \\&= \frac{(2n+1)h+1}{2(n+1)}-\vert g(y)\vert \\&\leqslant \frac{(2n+1)h+1}{2(n+1)}-h \\&=\frac{1-h}{2(n+1)}\cdot
\end{aligned}\]
Since $v=1-\frac{n+h}{n+1}=\frac{1-h}{n+1}$, it follows that $\vert l(y)-g'(y)\vert \leqslant \frac{v}{2}\cdot$ \\
- If $g(y)\geqslant u $, then Lemma~\ref{l5} implies that $g'(y) \in [u, 1]$ and $l(y)=u+\frac{v}{3(n+1)}\cdot$ 
So, 
if $g'(y)\leqslant l(y)$, then we obtain $\vert g'(y)-l(y)\vert =l(y)-g'(y)\leqslant u + \frac{v}{3(n+1)}-u = \frac{v}{3(n+1)}$; whereas if 
$g'(y)>l(y)$, then $\vert g'(y)-l(y)\vert =g'(y)-l(y)\leqslant 1-u- \frac{v}{3(n+1)}= \frac{3n+2}{3(n+1)}v.$ In either case, we see that $\vert g'(y)-l(y)\vert\leqslant \frac{3n+2}{3(n+1)}v.$\\
- Finally, one shows in the same way that if $g(y) \leqslant -u$ then $\vert g'(y)-l(y)\vert\leqslant \frac{3n+2}{3(n+1)}v\cdot$

\smallskip\noindent
Hence,  we have $\vert g'(y)-l(y)\vert \leqslant \frac{3n+2}{3(n+1)}v$ for all $ y \in K \minus (K'\cup \{x\})$; and it 
 follows that $\vert g'(y)-l(y)\vert \leqslant \frac{3n+2}{3(n+1)}v$ for all $ y \neq x$. Since $\vert g'(x)-l(x)\vert <v$ (as observed above), we conclude that $\Vert g'-l\Vert <v$.

\smallskip\noindent Therefore, \begin{center}
    $v=1-u=l(x)-g(x) \leqslant \Vert l-g\Vert \leqslant \Vert \Phi^{-1}(l)-\Phi^{-1}(g)\Vert =\Vert l-\Phi^{-1}(g)\Vert <v$,
\end{center}
which is a contradiction.

\smallskip\noindent
 The case $h_{i_0} <0$ is analogous.
\end{proof} 

\smallskip 
\begin{step3}  Let $\mathbf V=(V_1,\dots ,V_N)\in\mathcal V$ and let $\mathbf h =(h_1, \dots,h_N) \in [-1,1]^N$. Assume that $h= \min \,\bigl\{\vert h_i\vert  ;\;  i \in \{1,\dots,N\}\bigr\} \notin \{0,1\}$, and that $\vert h_i\vert \leqslant\frac{n+h}{n+1}$ for all $ i \in \{1,\dots,N\}$.  
Then, $\Phi _{|_{B_{\mathbf V, \mathbf h}}}=Id$.
\end{step3}

\begin{proof} The induction hypothesis (together with the density of $K\setminus K'$) implies that $\Phi  (B_{\mathbf V,\mathbf h}) \subset B_{\mathbf V,\mathbf h}$. Moreover, 
Step~2 implies that $\Phi  ^{-1}(B_{\mathbf V,\mathbf h}) \subset B_{\mathbf V,\mathbf h}$. So we have
 $\Phi  (B_{\mathbf V,\mathbf h}) = B_{\mathbf V, \mathbf h}$; and hence Fact~\ref{l10} implies that $\Phi _{|_{B_{\mathbf V,\mathbf h}}}=Id$.
\end{proof}

\smallskip
\begin{step4} For each $f \in B$ such that $u_{0,f} \notin \{0,1\}$ and $x \in K\setminus K'$, if $\vert f(x)\vert < u_{(n+1),f} $, then $\Phi (f)(x)=f(x)$. 
\end{step4}
\begin{proof}  Let us first assume that $f \in B_{\mathbf V,\mathbf h}$ for some $\mathbf V=(V_1,\dots ,V_N)\in\mathcal V$ and some  $\mathbf h =(h_1, \dots, h_N) \in [-1,1]^N$ such that $h =\min \, \bigl\{\vert h_i \vert; \; i \in \{1,\dots, N\}\bigr\} \notin \{0,1\}$. \\
Note that $h = u_{0,f}$. \\
Let $x \in \ K\setminus K'$ be such that $\vert f(x)\vert  < u_{(n+1),f} $. \\
If $\vert f(x)\vert  \leqslant u_{n,f}$, then the induction hypothesis implies that $\Phi (f)(x)=f(x)$. 

\smallskip\noindent
So we consider the case where $\vert f(x)\vert >u_{n,f}$. \\
Since $\vert f(x)\vert >u_{n,f}\geqslant u_{1,f}$ (because $n \geqslant 1$) and since   $u_{1,f}= \frac{1+ u_{0,f}}{2} $,  it follows that $u_{0,f}+1-\vert f(x)\vert <\vert f(x)\vert $. \\
Now, we define $g \in B$ as follows: 
\[\begin{aligned}
g(y)= \begin{cases}
\mathrm{sgn}\bigl(f(x)\bigr) &\text{if }y=x, \\
f(y)  &\text{if }\vert f(y)\vert <u_{0,f} \\&\text{or } u_{0,f}+1-\vert f(x)\vert \leqslant\vert f(y)\vert  <\vert f(x)\vert ,\\
\mathrm{sgn}\bigl(f(y)\bigr)(u_{0,f}+1-\vert f(x)\vert ) &\text{if } u_{0,f} \leqslant \vert f(y)\vert  < u_{0,f}+1-\vert f(x)\vert ,\\
\mathrm{sgn}\bigl(f(y)\bigr)\vert f(x)\vert  &\text{if }\vert f(y)\vert \geqslant \vert f(x)\vert \text{ and } y \neq x. 
\end{cases}
\end{aligned}\]
Note that $g$ is constant on each set $V_i \minus \{x\}$ since $f \in B_{\mathbf V,\mathbf h}$. If we choose $z_i\in V_i \minus \{x\}$ for $i=1,\dots ,N$, then $g\in B_{\mathbf V', \mathbf h'}$ where $\mathbf V'=(V_1\minus \{x\}, \dots, V_N\minus \{x\})$ and $\mathbf h'=(g(z_1),\dots ,g(z_N))$. Note also that $\vert g(y)\vert \leqslant \vert f(x)\vert$ for all $y\neq x$.\\
We can see that $u_{0,g}=u_{0,f}+1-\vert f(x)\vert $. 
Hence, $u_{n,g}=\frac{n+u_{0,f}+1-\vert f(x)\vert }{n+1}\cdot$ \\
Next, we observe that \[ u_{n,g}>\vert f(x)\vert .\]

\noindent Indeed, since $\vert f(x)\vert < u_{(n+1),f}=\frac{n+1+u_{0,f}}{n+2}$, we have $(n+2)\vert f(x)\vert < n+1+u_{0,f},$ so we get
\begin{align*}
u_{n,g}-\vert f(x)\vert  =\frac{n+u_{0,f}+1-(n+2)\vert f(x)\vert }{n+1}> 0. 
\end{align*}
Since $g \in B_{\mathbf V', \mathbf h'}$, $u_{0,g}\notin \{0,1\}$ and $\vert g(z_i)\vert \leqslant \vert f(x)\vert < u_{n,g}$, for all $i \in \{1,\dots,N\}$, Step~3 implies that \[ \Phi (g)=g.\]

\smallskip\noindent Let us show that $\Vert g-f\Vert = 1-\vert f(x)\vert$.\\
First, we have $\vert g(x)-f(x)\vert =1-\vert f(x)\vert $. \\
Now, let  $y \in K \minus \{ x\}$. \\
- If $\vert f(y)\vert  < u_{0,f}$ or $u_{0,f}+1-\vert f(x)\vert \leqslant \vert f(y)\vert <\vert f(x)\vert $, then $\vert f(y)-g(y)\vert =0$. \\
- If $u_{0,f} \leqslant \vert f(y)\vert <u_{0,f}+1-\vert f(x)\vert $, then 
\begin{align*}
    \vert f(y)-g(y)\vert &= \vert g(y)\vert -\vert f(y)\vert \\&\leqslant u_{0,f}+1-\vert f(x)\vert -u_{0,f}\\&=1-\vert f(x)\vert .
\end{align*}
- If $\vert f(y)\vert \geqslant\vert f(x)\vert $, then 
\begin{align*}
    \vert f(y)-g(y)\vert  &= \vert f(y)\vert -\vert g(y)\vert \\&=\vert f(y)\vert -\vert f(x)\vert \\&\leqslant 1-\vert f(x)\vert . 
\end{align*}
So we have proved that $\Vert f-g\Vert =1-\vert f(x)\vert $. Hence, \begin{align}\label{encorephi}
    \vert \Phi (f)(x)-\Phi (g)(x)\vert \leqslant \Vert \Phi (f)-\Phi (g)\Vert \leqslant \Vert f-g\Vert =1-\vert f(x)\vert . 
\end{align}
Since $\mathrm{sgn}\bigl(\Phi (f)(x)\bigr)= \mathrm{sgn}\bigl(f(x)\bigr)$ (from Lemma~\ref{l8}) and $\Phi (g)=g$,
\begin{center}
    $(\ref{encorephi}) \implies 1-\vert \Phi (f)(x)\vert \leqslant1-\vert f(x)\vert  \implies \vert \Phi (f)(x)\vert \geqslant\vert f(x)\vert $.
\end{center}
Moreover, Lemma~\ref{l5} implies that $\vert \Phi (f)(x)\vert \leqslant\vert f(x)\vert $. \\
So we have $\vert \Phi (f)(x)\vert =\vert f(x)\vert $ and $\mathrm{sgn}\bigl(\Phi (f)(x)\bigr)= \mathrm{sgn}\bigl(f(x)\bigr)$, \mbox{\it i.e.} $\Phi (f)(x)=f(x)$.

\smallskip
Now, we can prove Step~4 by an approximation argument. \\
Let $f \in B$ be such that $u_{0,f}\notin\{0,1\}$ and let $x \in \ K\setminus K'$ be such that $\vert f(x)\vert  < u_{(n+1),f} $. \\
For any $m \in \N$, one can find $g_m \in B$  taking finitely many values such that $\Vert f-g_m\Vert \leqslant\frac{1}{m}$, $f(x)=g_m(x)$ and $u_{0,g_m}=u_{0,f}$. Since $u_{0,g_m}=u_{0,f}$, it follows that  $u_{(n+1),g_m}=u_{(n+1),f}$.  So 
we have $\vert g_m(x)\vert =\vert f(x)\vert < u_{(n+1),g_m}$, hence  $\Phi (g_m)(x)=g_m(x)=f(x)$ (we have used the fact that $g_m\in B_{\mathbf V,\mathbf h}$ for some pair $(\mathbf V,\mathbf h)$). Consequently, $\vert \Phi (f)(x)-\Phi (g_m)(x)\vert \leqslant\Vert \Phi (f)- \Phi (g_m)\Vert \leqslant \Vert f-g_m\Vert \leqslant\frac{1}{m}$ for all $ m \in \N$. It follows that $\vert \Phi (f)(x)-f(x)\vert \leqslant \frac{1}{m}$ for all $ m \in \N$; so we obtain $\Phi(f)(x)=f(x)$ by letting $m$ tend to infinity. 
\end{proof}

\smallskip
\begin{step5} For each $f \in B$ such that $u_{0,f} \notin \{0,1\}$ and $x \in K\setminus K'$, if $f(x)\geqslant u_{(n+1),f} $, then $\Phi (f)(x)\in [u_{(n+1),f},f(x)]$.
\end{step5}
\begin{proof} 
Let us first assume that $f \in B_{\mathbf V,\mathbf h}$ for some $\mathbf V=(V_1,\dots, V_N)\in\mathcal V$ and some  $\mathbf h=(h_1,\dots, h_N)\in [-1,1]^N$ such that $h =\min \, \bigl\{\vert h_i \vert; \; i \in \{1,\dots, N\}\bigr\} \notin \{0,1\}$. 
Note that $h = u_{0,f}$. \\
Let $x \in \ K\setminus K'$ be such that $ f(x) \geqslant u_{(n+1),f}.$ \\
Since $u_{(n+1),f} > u_{1,f}$ (because $n \geqslant 1$), we have $u_{0,f}+1-u_{(n+1),f}<u_{(n+1),f} $. \\
Indeed, 
\begin{align*}
    u_{1,f}= \frac{1+u_{0,f}}{2} &\implies 1+ u_{0,f}=2 u_{1,f}<2u_{(n+1),f} \\&\implies 1+u_{0,f}-u_{(n+1),f}<u_{(n+1),f}. 
\end{align*}
Now, we define $g \in B$ as follows: 
\begin{align*}
    g(y)= \begin{cases}
    1 &\text{if }y=x, \\
    f(y) &\text{if } \vert f(y)\vert <u_{0,f}\\&\text{or } u_{0,f} +1-u_{(n+1),f}<\vert f(y)\vert <u_{(n+1),f}, \\
    \mathrm{sgn}\bigl(f(y)\bigr)\bigl(u_{0,f} +1-u_{(n+1),f}\bigr) &\text{if }u_{0,f} \leqslant \vert f(y)\vert  \leqslant u_{0,f}+1-u_{(n+1),f}, \\
    \mathrm{sgn}\bigl(f(y)\bigr)u_{(n+1),f} &\text{if }\vert f(y)\vert \geqslant u_{(n+1),f},\text{ and } y \neq x. 
    \end{cases}
\end{align*}

\noindent If we choose $z_i\in V_i \minus \{x\}$ for $i=1,\dots ,N$, then $g\in B_{\mathbf V', \mathbf h'}$ where $\mathbf V'=(V_1\minus \{x\}, \dots, V_N\minus \{x\})$ and $\mathbf h'=(g(z_1),\dots ,g(z_N))$.

\smallskip\noindent
Since $u_{0,g}=u_{0,f}+1-u_{(n+1),f}$, it follows that $u_{n,g}= \frac{n+u_{0,f}+1-u_{(n+1),f}}{n+1}\cdot$  And since $u_{(n+1),f}=\frac{n+1+u_{0,f}}{n+2} $, we get
\[ u_{n,g}=u_{(n+1),f}.\] 

\noindent
Hence, $g \in B_{\mathbf V',\mathbf h'}$, $u_{0,g}\notin \{0,1\}$ and $\vert g(z_i)\vert \leqslant u_{(n+1),f}=u_{n,g}$, for all $i \in \{1,\dots, N\}$. Step~3 then implies that \[\Phi (g)=g.\]

\smallskip\noindent Let us show that $\Vert f-g\Vert \leqslant 1-u_{(n+1),f}$. \\
First, we  have $\vert f(x)-g(x)\vert =1-f(x)\leqslant 1-u_{(n+1),f}$. \\
Now, let $y \in K\minus \{x\}$.\\
- If $\vert f(y)\vert <u_{0,f}$ or $u_{0,f} +1-u_{(n+1),f}<\vert f(y)\vert <u_{(n+1),f}$, then $\vert f(y)-g(y)\vert =0$. \\
- If $u_{0,f}\leqslant \vert f(y)\vert \leqslant u_{0,f}+1-u_{(n+1),f}$, then 
\begin{align*}
    \vert f(y)-g(y)\vert  &= \vert g(y)\vert -\vert f(y)\vert \\&\leqslant u_{0,f}+1-u_{(n+1),f}-u_{0,f} \\&=1-u_{(n+1),f}. 
\end{align*}
- If $\vert f(y)\vert \geqslant u_{(n+1),f}$, then 
\begin{align*}
    \vert f(y)-g(y)\vert &= \vert f(y)\vert -\vert g(y)\vert  \\&= \vert f(y)\vert -u_{(n+1),f}\\&\leqslant 1- u_{(n+1),f}. \end{align*}
So we have shown that $\Vert f-g\Vert \leqslant 1- u_{(n+1),f}$. Hence,
\begin{align}\label{ineqphi2}
    \vert \Phi (f)(x)-\Phi (g)(x)\vert \leqslant \Vert \Phi (f)-\Phi (g)\Vert \leqslant \Vert f-g\Vert \leqslant 1-u_{(n+1),f}. 
\end{align}
Since $\Phi (g)=g$,  
\[ (\ref{ineqphi2}) \implies 1-\Phi (f)(x)\leqslant1- u_{(n+1),f} \implies \Phi (f)(x) \geqslant u_{(n+1),f}.\]

\smallskip
Now, we can prove Step~5, again by an approximation argument. \\
Let $f \in B$ be such that $u_{0,f}\notin\{0,1\}$ and let $x \in \ K\setminus K'$ be such that $f(x) \geqslant u_{(n+1),f} $. \\
Since $f(x) \geqslant u_{(n+1),f} >0$, it follows from Lemma~\ref{l5} that $\Phi (f)(x) \leqslant f(x)$. So, it remains to show that $\Phi (f)(x) \geqslant u_{(n+1),f}$.\\
For any $m \in \N$, one can find  $g_m \in B$ taking finitely many values such that $\Vert f-g_m\Vert \leqslant\frac{1}{m}$, $f(x)=g_m(x)$ and $u_{0,g_m}=u_{0,f}$. Then, we have  
$u_{(n+1),g_m}=u_{(n+1),f}$. So $g_m(x)=f(x)\geqslant u_{(n+1),g_m}$, and hence  $\Phi (g_m)(x)\geqslant u_{(n+1),g_m}=u_{(n+1),f}$ for all $m\in\N$. Since 
 $\vert \Phi (f)(x)-\Phi (g_m)(x)\vert \leqslant\Vert \Phi (f)- \Phi (g_m)\Vert \leqslant \Vert f-g_m\Vert \leqslant\frac{1}{m}$, it follows that $\Phi (f)(x)\geqslant u_{(n+1),f}$.
\end{proof}

\smallskip
\begin{step6} For each $f \in B$ such that $u_{0,f} \notin \{0,1\}$ and $x \in K\setminus K'$, if $f(x)\leqslant -u_{(n+1),f} $, then $\Phi (f)(x)\in [f(x),-u_{(n+1),f}]$.
\end{step6}
\begin{proof} 
The proof  is analogous to that of Step~5.
\end{proof}

\medskip At this point, we have proved Lemma~\ref{l17} by induction. So the proof of Theorem~\ref{main} is finally complete.

\smallskip
\section{Proof of Theorem \ref{main2}}\label{S4} In this section, $K$ is a zero-dimensional compact Hausdorff space such that $K\setminus K'$ is dense in $K$, and we denote $B_{C(K)}$ by $B$. We fix a non-expansive homeomorphism $F:B\to B$, and we want to show that $F$ is an isometry.

\smallskip Looking back at the proof of Theorem~\ref{main}, we see that we only need two things:
\begin{itemize}
\item[-] that Lemma~\ref{l1} holds as stated, without assuming that $K'$ is a finite set,
\item[-] that Fact~\ref{l12} holds as stated, without assuming that $K$ is metrizable.
\end{itemize}

\smallskip Concerning Lemma~\ref{l1}, the only place in the proof where we used that $K'$ is finite is the end of the proof of Step~4, to get that the map $\sigma$ is injective.

Let us now show that $\sigma$ is injective by using the continuity of $F^{-1}$. For this we need the following three facts (which are true without assuming that $F^{-1}$ is continuous). Recall that $\alpha = F^{-1}(\ind)$.
\begin{fact}\label{f1}
Let $f \in B$ be such that $f=\alpha \cdot g$ where $g(x) \geqslant 0$ for all $x \in K$. 
Then, $F(f)(x) \geqslant 0$ for all $x \in K$. Moreover, if $a \in K\minus K'$ is such that $f(a)=0$, then $F(f)(\sigma(a))=0$. 
\end{fact}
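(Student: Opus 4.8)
The plan is to deduce both conclusions by testing $f$ against suitably chosen \emph{extreme points} of $B$ and using the formula of Fact~\ref{keyformulabiz}. Recall that for any extreme point $\psi$ of $B$ one has $F^{-1}(\psi)(x)=\alpha(x)\psi(\sigma(x))$; since $f=\alpha\cdot g$, this gives $F^{-1}(\psi)-f=\alpha\cdot(\psi\circ\sigma-g)$, hence
\[ \|F^{-1}(\psi)-f\| \;=\; \sup_{x\in K}\bigl|\psi(\sigma(x))-g(x)\bigr|. \]
Note also that $f\in B$ and $|\alpha|\equiv 1$ force $0\leqslant g\leqslant 1$ on $K$, a fact used throughout.

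For the first assertion I would take $\psi=\ind$. Then $F^{-1}(\ind)=\alpha$ by definition, and $\|\alpha-f\|=\|\alpha(\ind-g)\|=\|\ind-g\|\leqslant 1$ because $0\leqslant g\leqslant 1$. Since $F$ is non-expansive and $F(\alpha)=\ind$, this yields $\|\ind-F(f)\|\leqslant 1$, i.e. $F(f)(x)\geqslant 0$ for every $x\in K$.

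For the ``moreover'' part, fix $a\in K\setminus K'$ with $f(a)=0$; since $\alpha(a)=\pm1$ this means $g(a)=0$. Here I would use the extreme point $\phi$ defined by $\phi(\sigma(a))=-1$ and $\phi\equiv 1$ elsewhere, which is legitimate since $a$ is isolated in $K$ and $\sigma(a)=\sigma_0(a)\in K\setminus K'$ is isolated as well, so $\{\sigma(a)\}$ is clopen. Feeding this $\phi$ into the displayed formula, one checks that $\|F^{-1}(\phi)-f\|\leqslant 1$: for every $x$ with $\sigma(x)\neq\sigma(a)$ the corresponding term equals $\bigl|\ind-g(x)\bigr|\leqslant 1$, whereas the only $x$ with $\sigma(x)=\sigma(a)$ is $x=a$ itself --- indeed $\sigma(a)\in K\setminus K'$ forces $x\notin K'$ (otherwise $\sigma(x)\in\sigma(K')\subseteq K'$), and then $\sigma_0(x)=\sigma_0(a)$ gives $x=a$ by injectivity of $\sigma_0$ --- where the term is $|-1-g(a)|=1$. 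Non-expansiveness of $F$ then gives $\|\phi-F(f)\|\leqslant 1$, hence $F(f)(\sigma(a))\leqslant 0$; combined with the first part this gives $F(f)(\sigma(a))=0$.

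I do not expect any serious obstacle here: the only point requiring a little care is the implication $\sigma(x)=\sigma(a)\Rightarrow x=a$, which is precisely where the facts already established in Steps~2 and~4 of Lemma~\ref{l1} --- that $\sigma_0$ is a bijection of $K\setminus K'$ onto itself and that $\sigma(K')\subseteq K'$ --- are used, so that the possible non-injectivity of $\sigma$ on all of $K$ does no harm.
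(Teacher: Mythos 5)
Your proposal is correct and follows essentially the same route as the paper: the first assertion via $\|f-\alpha\|\leqslant 1$ and $F(\alpha)=\ind$, and the second via the extreme point equal to $-1$ at $\sigma(a)$ and $1$ elsewhere, evaluated through Fact~\ref{keyformulabiz}, with the key implication $\sigma(x)=\sigma(a)\Rightarrow x=a$ justified exactly as in the paper by the injectivity of $\sigma_0$ together with $\sigma(K')\subseteq K'$. No gaps.
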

\begin{proof}
For every $x \in K$, we have $\lvert f(x)-\alpha(x)\rvert=\lvert \alpha(x)g(x)-\alpha(x)\rvert=\lvert 1-g(x)\rvert\leqslant 1$ because $g(x)\geqslant 0$, hence $\Vert f-\alpha \Vert \leqslant 1$. Since $F$ is non-expansive, it follows that 
\[\begin{aligned}
\Vert F(f)-F(\alpha)\Vert \leqslant 1 &\implies \Vert F(f) - \ind \Vert \leqslant 1 \\&\implies F(f)(x) \geqslant 0 \text{, for all } x \in K.
\end{aligned}\]
Now, let $a \in K\minus K'$ be such that $f(a)=0$. We want to show that $F(f)(\sigma(a))=0$. \\
We know that $F(f)(\sigma(a))\geqslant 0$, so it remains to show that $F(f)(\sigma(a))\leqslant 0$. \\
Let $h$ be the following extreme point of $B$:
\[
h(x)= \begin{cases} 1 &\text{ if } x \neq \sigma(a),\\
-1 &\text{ if } x = \sigma(a).
\end{cases}
\]
Since $h$ is an extreme point of $B$, Fact~\ref{keyformulabiz} implies that \begin{center}
    $F^{-1}(h)(a)=h(\sigma(a))\alpha(a)=-\alpha(a)$,
\end{center}
and that for every $x \neq a$, we have
\begin{center}
    $F^{-1}(h)(x)=h(\sigma(x))\alpha(x)=\alpha(x)$,
\end{center}
since $\sigma(x) \neq \sigma(a)$ (because $\sigma_0$ is injective and $\sigma(K') \subset K'$). \\
Hence, \[\begin{aligned}
\Vert F^{-1}(h)-f \Vert \leqslant 1 &\implies \Vert h-F(f)\Vert \leqslant 1 \\&\implies \vert h(\sigma(a))-F(f)(\sigma(a))\vert \leqslant 1 \\&\implies 1+ F(f)(\sigma(a))\leqslant1 \\&\implies F(f)(\sigma(a))\leqslant0.
\end{aligned} \]
\end{proof}
\begin{fact}\label{f2}
Let $f \in B$ be such that $f=\alpha \cdot g$ where $g(x) \geqslant 0$ for all $x \in K$. 
Then, for every $a \in K \minus K'$, we have
\begin{center}
   $ 0\leqslant F(f)(\sigma(a))\leqslant g(a)$. 
\end{center}
\end{fact}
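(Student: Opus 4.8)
The lower inequality $F(f)(\sigma(a))\geqslant 0$ is simply the first assertion of Fact~\ref{f1}, so the entire content is the upper bound $F(f)(\sigma(a))\leqslant g(a)$. The plan is to reuse, for a fixed $a\in K\minus K'$, the same extreme point $h$ that appears in the proof of Fact~\ref{f1} and to apply non-expansiveness of $F$ once more. Recall $h\in B$ is defined by $h(x)=1$ for $x\neq\sigma(a)$ and $h(\sigma(a))=-1$; this is a continuous function because $a$ is isolated in $K$ and $\sigma$ is a homeomorphism, so $\sigma(a)$ is isolated as well, and $h$ is an extreme point of $B$ since $\vert h\vert\equiv 1$. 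By Fact~\ref{keyformulabiz} (exactly as in the proof of Fact~\ref{f1}, using that $\sigma(x)\neq\sigma(a)$ for $x\neq a$), we have $F^{-1}(h)(a)=-\alpha(a)$ and $F^{-1}(h)(x)=\alpha(x)$ for every $x\neq a$.

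Next I would compute $\Vert F^{-1}(h)-f\Vert$. Since $f=\alpha\cdot g$ with $0\leqslant g\leqslant 1$ and $\vert\alpha\vert\equiv 1$, for $x\neq a$ we get $\vert F^{-1}(h)(x)-f(x)\vert=\vert\alpha(x)\vert\,\vert 1-g(x)\vert=1-g(x)\leqslant 1$, while at the isolated point $a$ we get $\vert F^{-1}(h)(a)-f(a)\vert=\vert\alpha(a)\vert\,(1+g(a))=1+g(a)$. As $1+g(a)\geqslant 1$, the supremum is attained at $a$, so $\Vert F^{-1}(h)-f\Vert=1+g(a)$.

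Finally, since $F$ is non-expansive, $\Vert h-F(f)\Vert=\Vert F(F^{-1}(h))-F(f)\Vert\leqslant\Vert F^{-1}(h)-f\Vert=1+g(a)$; evaluating at $\sigma(a)$ gives $\vert -1-F(f)(\sigma(a))\vert\leqslant 1+g(a)$. By the first part of Fact~\ref{f1} we already know $F(f)(\sigma(a))\geqslant 0$, hence $\vert -1-F(f)(\sigma(a))\vert=1+F(f)(\sigma(a))$, and therefore $F(f)(\sigma(a))\leqslant g(a)$; combined with $F(f)(\sigma(a))\geqslant 0$ this is precisely the claim (note that the case $g(a)=0$ is recovered automatically, giving back $F(f)(\sigma(a))=0$ as in Fact~\ref{f1}). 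The argument is short and I do not expect a genuine obstacle; the only points requiring care are the bookkeeping ones: that $h$ is continuous because $\sigma(a)$ is isolated, that the norm $\Vert F^{-1}(h)-f\Vert$ is controlled exactly at $a$ thanks to $0\leqslant g\leqslant 1$, and the final sign manipulation that uses $F(f)(\sigma(a))\geqslant 0$ to discard the absolute value.
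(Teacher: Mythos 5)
Your argument is correct and is essentially the paper's own proof: the same extreme point $h$, the same computation $\Vert F^{-1}(h)-f\Vert=1+g(a)$, and the same application of non-expansiveness evaluated at $\sigma(a)$ (the paper merely splits off the case $f(a)=0$, which, as you note, is subsumed). One small caveat: in Section~\ref{S4} these facts are proved \emph{before} $\sigma$ is known to be injective, so you should justify that $\sigma(a)$ is isolated by $\sigma(a)=\sigma_0(a)\in K\setminus K'$ rather than by ``$\sigma$ is a homeomorphism''.
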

\begin{proof}
Let $a \in K \minus K'$. \\
If $f(a)=0$, then Fact~\ref{f1} implies that $F(f)(\sigma(a))=0$. \\
Now, assume that $f(a) \neq 0$. We know from Fact~\ref{f1} that $F(f)(\sigma(a))\geqslant 0$, so it remains to show that $F(f)(\sigma(a))\leqslant g(a)$. \\
We take $h$ as in the proof of Fact~\ref{f1}; hence $F^{-1}(h)(x)=\alpha(x)$ for all $x \neq a$, and  $F^{-1}(h)(a)=-\alpha(a)$. So, we obtain that $\vert F^{-1}(h)(x)-f(x)\vert \leqslant 1$ for all $x \neq a $, and $\vert F^{-1}(h)(a)-f(a)\vert=1+g(a)$. 
Hence,
\[\begin{aligned}
\Vert F^{-1}(h)-f\Vert =1+g(a) &\implies \Vert h -F(f) \Vert \leqslant 1+g(a) \\&\implies \vert h(\sigma(a))-F(f)(\sigma(a))\vert \leqslant 1+ g(a) \\&\implies 1+F(f)(\sigma(a))\leqslant 1+g(a)\\&\implies F(f)(\sigma(a)) \leqslant g(a).
\end{aligned} \]
\end{proof}
\begin{fact}\label{f3}
Let $U$ be a clopen subset of $K$ and let $f \in B$ be such that $f=\alpha \cdot g$ where 
\[
g(x)= \begin{cases} a &\text{ if } x \in U,\\
b &\text{otherwise} , 
\end{cases}
\]
and $0\leqslant b\leqslant a \leqslant1$. 
\begin{itemize}
    \item[\rm (i)] If $a=b$, then $F(f)=a \ind$.
    \item[\rm (ii)] If $a > b$, then 
    \begin{itemize}
        \item[-] for every $x \in K \minus U$, $F(f)(\sigma(x))=b$,
        \item[-] for every $x \in U$,  $b \leqslant F(f)(\sigma(x))\leqslant a $.
    \end{itemize}
\end{itemize}
\end{fact}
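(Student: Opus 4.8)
The plan is to dispose of (i) at once and to deduce (ii) from Facts~\ref{f1} and~\ref{f2} together with one short splitting argument, after which the usual density-and-continuity argument upgrades the conclusion from $K\setminus K'$ to all of $K\setminus U$. For (i), if $a=b$ then $g\equiv a$, so $f=a\alpha$; since $\ind$ is an extreme point of $B$ and $\alpha=F^{-1}(\ind)$, item~3) of Theorem~\ref{th1} gives $F^{-1}(a\ind)=a\,F^{-1}(\ind)=a\alpha=f$, that is, $F(f)=a\ind$.

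Now assume $a>b$. I would first establish the two global bounds $b\leqslant F(f)(z)\leqslant a$ for every $z\in K$. The upper bound comes from Fact~\ref{f2}: for $x\in K\setminus K'$ one has $F(f)(\sigma(x))\leqslant g(x)\leqslant a$, and since $\sigma$ carries the dense set $K\setminus K'$ onto itself while $F(f)$ is continuous, it follows that $F(f)\leqslant a$ on all of $K$. For the lower bound, observe that $|f(x)-\alpha(x)|=|\alpha(x)|\,|g(x)-1|=1-g(x)\leqslant 1-b$ for every $x\in K$, hence $\Vert f-\alpha\Vert\leqslant 1-b$; as $F$ is non-expansive and $F(\alpha)=\ind$, this yields $\Vert F(f)-\ind\Vert\leqslant 1-b$, so $F(f)(z)\geqslant b$ for all $z\in K$. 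In particular, for $x\in U$ we already obtain $b\leqslant F(f)(\sigma(x))\leqslant a$, which is the second assertion of (ii).

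It remains to show that $F(f)(\sigma(x))=b$ for $x\in K\setminus U$. Here I would write $f=b\alpha+f_0$, where $f_0=\alpha\cdot(a-b)\ind_U$; then $f_0=\alpha\cdot g_0$ with $g_0\geqslant 0$, the zero set of $f_0$ is exactly $K\setminus U$, and $\Vert f-f_0\Vert=\Vert b\alpha\Vert=b$. For $x\in(K\setminus U)\cap(K\setminus K')$ we have $f_0(x)=0$, so Fact~\ref{f1} gives $F(f_0)(\sigma(x))=0$; since $F$ is non-expansive, $|F(f)(\sigma(x))-F(f_0)(\sigma(x))|\leqslant b$, hence $F(f)(\sigma(x))\leqslant b$, and combined with the lower bound from the previous paragraph this gives $F(f)(\sigma(x))=b$ for all $x\in(K\setminus U)\cap(K\setminus K')$. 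Finally, $K\setminus U$ is clopen and $K\setminus K'$ is dense in $K$, so $(K\setminus U)\cap(K\setminus K')$ is dense in $K\setminus U$; given $x\in K\setminus U$, pick a net $(x_d)$ in $(K\setminus U)\cap(K\setminus K')$ converging to $x$ and use the continuity of $\sigma$ and of $F(f)$ to conclude $F(f)(\sigma(x))=\lim_d F(f)(\sigma(x_d))=b$.

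The routine parts are case (i) and the two global bounds. The one delicate point is pinning down the \emph{exact} value $b$ off $U$: Fact~\ref{f2} alone does not suffice there, since it only gives the inequality $F(f)(\sigma(x))\leqslant g(x)$. What does the job is the decomposition $f=b\alpha+f_0$, which lets one invoke the sharp vanishing statement of Fact~\ref{f1} for $f_0$ and then transfer it to $f$ at a cost of at most $\Vert b\alpha\Vert=b$ — an error that is precisely absorbed by the global lower bound $F(f)\geqslant b$. Everything else is the density-and-continuity routine already used throughout the paper.
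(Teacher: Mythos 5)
Your proof is correct, and it takes a partially different route from the paper's. Part (i) and the concluding density-and-continuity step are the same. For the global lower bound $F(f)\geqslant b$, the paper compares $f$ with the midpoint function $l=c\alpha$, $c=\frac{a+b}{2}$, using $F(l)=c\ind$ from part (i) and $\Vert l-f\Vert=c-b$; you compare $f$ directly with $\alpha$ via $\Vert f-\alpha\Vert\leqslant 1-b$ and $F(\alpha)=\ind$, which is slightly more economical and avoids invoking (i) a second time. For the exact value off $U$, the paper simply combines the lower bound with Fact~\ref{f2}: for $x\in(K\minus U)\cap(K\minus K')$ that fact gives $F(f)(\sigma(x))\leqslant g(x)=b$, hence equality. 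So your remark that Fact~\ref{f2} ``does not suffice there'' is not quite accurate --- it yields precisely the needed upper bound $b$ at those points once the lower bound is in hand. Your alternative, the decomposition $f=b\alpha+f_0$ with $f_0=\alpha\cdot(a-b)\ind_U$ and the vanishing statement of Fact~\ref{f1} applied to $f_0$ (which is legitimate: $f_0\in B$, and $g_0=(a-b)\ind_U\geqslant 0$ is continuous because $U$ is clopen), is perfectly valid, just a longer path to the same inequality $F(f)(\sigma(x))\leqslant b$; it buys nothing extra here, though it shows the upper bound off $U$ can be reached without Fact~\ref{f2}, whose full strength is then only needed on $U$.
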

\begin{proof}
(i)  By Theorem~\ref{th1} (item~3), we have $F^{-1}(a\ind)=aF^{-1}(\ind)=a \alpha=f$. Hence, $F(f)=a\ind$. 

\smallskip\noindent
(ii) Define $l=c \alpha \in B$ where $c=\frac{a+b}{2}\cdot$ By (i), we have $F(l)=c \ind$. \\
For every $x \in U$, 
\begin{center}
    $\vert l(x)-f(x)\vert =\vert c\alpha(x)-a\alpha(x)\vert =a-c$,
\end{center}
and for every $x \in K \minus U$, 
\begin{center}
    $\vert l(x)-f(x)\vert =\vert c\alpha(x)-b\alpha(x)\vert =c-b$. 
\end{center}
Hence, $\Vert l-f\Vert =c-b=a-c$ because $c=\frac{a+b}{2}\cdot$ Since $F$ is non-expansive, we get $\Vert F(l)-F(f)\Vert \leqslant c-b$. \\
For every $x \in K $, we have 
\[\begin{aligned}
\vert F(l)(\sigma(x))-F(f)(\sigma(x))\vert \leqslant c-b &\implies F(l)(\sigma(x))-F(f)(\sigma(x)) \leqslant c-b \\ &\implies c-F(f)(\sigma(x)) \leqslant c-b \\&\implies F(f)(\sigma(x)) \geqslant b.
\end{aligned} \]
Hence, by Fact~\ref{f2}, we get $F(f)(\sigma(x))=b$ for all $x \in K\minus(K'\cup U)$, and $b \leqslant F(f)(\sigma(x))\leqslant a$ for all $x \in ( K \minus K') \cap U$. \\
Since $U$ is a clopen subset of $K$, $K \minus K'$ is dense in $K$ and $\sigma$ and $F(f)$ are continuous, the proof of (ii) is complete. 
\end{proof}
Now we are ready to show that $\sigma$ is injective. Towards a contradiction, suppose that there exist $u,v \in K$ such that $u \neq v$ and $\sigma(u)=\sigma(v):=z$. \\
Since $\sigma(K') \subset K'$ and $\sigma_0$ is injective, we get $u,v \in K'$. \\
Since $K$ is zero-dimensional, one can find a clopen neighbourhood $U$ of $u$ such that $v \notin U$. \\
Now, let us define $f \in B$ as follows: 
\[
f(x)= \begin{cases} a \alpha(x)&\text{ if } x \in U,\\
b \alpha(x)&\text{otherwise} , 
\end{cases}
\]
where $0\leqslant b<a\leqslant1$.\\
Fact~\ref{f3} implies that $F(f)(\sigma(x))=b$ for all $x \in K \minus U$, and $b \leqslant F(f)(\sigma(x)) \leqslant a$ for all $x \in U$.\\ 
In particular, we have $F(f)(z)=F(f)(\sigma(v))=b$ since $v \in K \minus U$. \\
Let $(a_d)$ be a net in $U\minus K'$ such that $a_d \longrightarrow u$. Since $F(f)$ and $\sigma$ are continuous, we see that $ F(f)(\sigma(a_d)) \longrightarrow F(f)(z)=b$, and Fact~\ref{f3} implies that for every $d$, we have $b \leqslant F(f)(\sigma(a_d))\leqslant a$. \\
Now, for every $d$, we define $g_d \in B$ as follows: 
\[
g_d(x)= \begin{cases} c \alpha(x)&\text{ if } x \neq a_d,\\
a \alpha(x)&\text{ if } x=a_d, 
\end{cases}
\]
where $c = \frac{a+b}{2}\cdot$ \\
Since $\{a_d\}$ is a clopen subset of $K$ (because $a_d \in K \minus K'$), Fact~\ref{f3} implies that $F(g_d)(\sigma(x)) =c$ for every $x \neq a_d$, and $c \leqslant F(g_d)(\sigma(a_d))\leqslant a$. This implies that $F(g_d)(y)=c$, for all $y \neq \sigma(a_d)$. Indeed, let $y \in K \minus (K'\cup \{\sigma(a_d)\})$. Since $\sigma_0$ is bijective, there exists $x \in K \minus (K'\cup \{a_d\})$ such that $\sigma(x)=y$. Hence, $F(g_d)(y)=c$. It follows that $F(g_d)(y)=c$, for all $y \neq \sigma(a_d)$, since $K \minus K'$ is dense in $K$.\\
For every $d$, we have 
\[
\vert g_d(x)-f(x)\vert = \begin{cases} \vert a \alpha(a_d)-a \alpha(a_d)\vert =0 &\text{ if } x = a_d,\\
\vert c \alpha(x) -a\alpha (x) \vert = a-c &\text{ if } x\in U \minus\{a_d\}, \\
\vert c \alpha(x)-b \alpha(x)\vert = c-b &\text{ if } x\in K\minus U, 
\end{cases}
\]
because $a_d \in U$. \\
Since $a-c=c-b$, it follows that $\Vert g_d -f \Vert =c-b$, for every $d$. Hence, $\Vert F(g_d) -F(f) \Vert \leqslant c-b$ and in particular $F(g_d)(\sigma(a_d))-F(f)(\sigma(a_d))  \leqslant c-b$, for every $d$. \\
Since we know that $ F(f)(\sigma(a_d)) \longrightarrow b$ and $F(g_d)(\sigma(a_d)) \geqslant c$ for every $d$, it follows that $F(g_d)(\sigma(a_d)) \longrightarrow c$. \\
Now, let us define $l \in B$ such that $l=c \alpha$. Fact~\ref{f3} implies that $F(l)=c \ind$. \\
We have
\begin{center}
    $\Vert F(g_d)-F(l)\Vert = \vert F(g_d)(\sigma(a_d))-F(l)(\sigma(a_d))\vert \longrightarrow 0$. 
\end{center}
Hence, $F(g_d)\longrightarrow F(l)$ and the continuity of $F^{-1}$ implies that $g_d \longrightarrow l $, which is a contradiction since $\Vert g_d -l\Vert = \vert g_d(a_d)-l(a_d)\vert = a-c $ for every $d$.

\medskip Let us turn to Fact~\ref{l12}. Keeping the notation of the proof we have given for it assuming the metrizability of $K$, one can modify the definition of the function $l$ by setting $l(y)=\mathrm{sgn}(h_i)(h+\frac{v}2)$ if $y\in V_i$ and $y\neq x$. In this way, the function $\chi$ does not appear any more and hence the metrizability assumption is no longer used. The definition of $l$ shows that $\vert l(y)\vert \leqslant u_{0,l}$ for all $y\in K\setminus(K'\cup\{ x\})$; and since $\Phi^{-1}$ is continuous (because $F^{-1}$ is continuous), we can apply Corollary~\ref{l11} and the remark following it to conclude that $\Phi^{-1}(l)(y)=l(y)$ for all $y\in K\setminus(K'\cup\{ x\})$. Once this is known, the remaining of the proof works without any change.

\smallskip
\section{Additional results}\label{S5}
In this section, we prove the following Theorem.

\begin{theorem}\label{main3} Let $K$ be a zero-dimensional compact Hausdorff space with a dense set of isolated points such that $K'$ is a $G_\delta$ subset of $K$, and let $F:B_{C(K)}\to B_{C(K)}$ be a non-expansive bijection. Under any of the following additional assumptions, one can conclude that $F$ is an isometry.
\begin{itemize}
\item[\rm (i)] $F$ has the following property~$(*)$: for any $u,v,z\in K$ with $u\neq v$ and for any sign $\omega\in\{ -1,1\}$, one can find $f\in B_{C(K)}$ such that $f(u)f(v)$ is non-zero with sign $\omega$ and $\vert F(f)\vert \equiv 1$ in a neighbourhood of $z$.
\item[\rm (ii)]  $\inf\,\{ \Vert F(\omega \ind_{\{a\}})\Vert;\; a\in K \minus K', \; \omega = \pm 1\} >1/2$.
\end{itemize}
\end{theorem}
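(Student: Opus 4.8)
The plan is to follow the proof of Theorem~\ref{main} line by line. Exactly as observed at the beginning of Section~\ref{S4}, that proof uses only two facts that might fail in the present generality: that Lemma~\ref{l1} holds --- the single delicate point being that the continuous surjection $\sigma:K\to K$ (which already restricts to a bijection on $K\setminus K'$ and satisfies $\sigma(K')\subset K'$) is \emph{injective} --- and that Fact~\ref{l12} holds, the only use of metrizability there being to know that the closed set $K'$ is a zero set. Since a closed $G_\delta$ subset of a normal space is a zero set, the hypothesis that $K'$ is $G_\delta$ disposes of Fact~\ref{l12}, and the original argument for it applies unchanged. So the whole problem reduces to proving, under (i) or (ii), that $\sigma$ is injective; at that stage we are free to use Step~1 in the proof of Lemma~\ref{l1}, Facts~\ref{keyformula} and~\ref{keyformulabiz}, and Facts~\ref{f1}--\ref{f3}, none of which relies on $K'$ being finite, on $K$ being metrizable, or on $F^{-1}$ being continuous. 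Suppose then that $\sigma(u)=\sigma(v)=:z$ with $u\neq v$; as in Section~\ref{S4} this forces $u,v\in K'$. Fix nets $(a_d),(b_e)$ in $K\setminus K'$ with $a_d\to u$ and $b_e\to v$; then $\sigma_0(a_d)=\sigma(a_d)\to z$ and $\sigma_0(b_e)\to z$, and, $\alpha$ being continuous and $\{-1,1\}$-valued, $\alpha(a_d)=\alpha(u)$ and $\alpha(b_e)=\alpha(v)$ for all large indices. Recall from Step~1 in the proof of Lemma~\ref{l1} that $F(t\ind_{\{a\}})=c_a(t)\ind_{\{\sigma_0(a)\}}$ for $a\in K\setminus K'$, with $c_a$ continuous, strictly monotone, $c_a(0)=0$ and $\alpha(a)=\mathrm{sgn}(c_a(1))$, so that $\mathrm{sgn}(c_a(t))=\mathrm{sgn}(t)\,\alpha(a)$ for $t\neq0$.

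\emph{Assume (i).} I would first prove a rigidity statement: if $f\in B$ and $a\in K\setminus K'$ satisfy $\vert F(f)(\sigma_0(a))\vert=1$ and $f(a)\neq0$, then $\mathrm{sgn}(f(a))=\alpha(a)\,F(f)(\sigma_0(a))$. Indeed, otherwise put $t^*=-\alpha(a)\,F(f)(\sigma_0(a))\in\{-1,1\}$; then $f(a)$ has the same sign as $t^*$, so $\Vert f-t^*\ind_{\{a\}}\Vert\le1$ (coordinate $a$ contributes $1-\vert f(a)\vert\le1$, the others contribute $\vert f(x)\vert\le1$), whence $\vert F(f)(\sigma_0(a))-c_a(t^*)\vert\le1$ by non-expansiveness and Step~1; but $c_a(t^*)$ is non-zero with sign $\mathrm{sgn}(t^*)\alpha(a)=-F(f)(\sigma_0(a))$, so $\vert F(f)(\sigma_0(a))-c_a(t^*)\vert=1+\vert c_a(t^*)\vert>1$, a contradiction. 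Now apply $(*)$ to the triple $u,v,z$ with the sign $\omega=-\alpha(u)\alpha(v)$, obtaining $f\in B$ with $f(u)f(v)$ non-zero of sign $\omega$ and $\vert F(f)\vert\equiv1$ on a neighbourhood of $z$. For $d$ large, $\sigma_0(a_d)$ lies in that neighbourhood, so the rigidity statement applies to $(f,a_d)$ and gives $\mathrm{sgn}(f(a_d))=\alpha(u)\,F(f)(\sigma_0(a_d))$; letting $d\to\infty$ (using continuity of $f$, of $F(f)$ and of $\sigma$) yields $\mathrm{sgn}(f(u))=\alpha(u)\,F(f)(z)$, and likewise $\mathrm{sgn}(f(v))=\alpha(v)\,F(f)(z)$. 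Multiplying, $\mathrm{sgn}(f(u)f(v))=\alpha(u)\alpha(v)\,F(f)(z)^2=\alpha(u)\alpha(v)=-\omega$, contradicting the choice of $f$. Hence $\sigma$ is injective.

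\emph{Assume (ii).} Set $\delta=\inf\{\Vert F(\omega\ind_{\{a\}})\Vert;\ a\in K\setminus K',\ \omega=\pm1\}$, so $\delta>\tfrac12$ and $\vert c_a(\pm1)\vert=\Vert F(\pm\ind_{\{a\}})\Vert\ge\delta$ for all $a\in K\setminus K'$. Pick $a\in(\tfrac12,\delta)$ (possible since $\delta>\tfrac12$), a clopen neighbourhood $U$ of $u$ with $v\notin U$, and put $f=\alpha\cdot a\,\ind_U\in B$. By Fact~\ref{f3}, $F(f)(\sigma(v))=0$ because $v\notin U$; hence $F(f)(\sigma_0(a_d))\to F(f)(z)=0$. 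On the other hand, writing $\eta=\alpha(u)$, for $d$ large we have $\eta\ind_{\{a_d\}}=\alpha(a_d)\ind_{\{a_d\}}$, so $F(\eta\ind_{\{a_d\}})=c_{a_d}(\eta)\ind_{\{\sigma_0(a_d)\}}$ with $c_{a_d}(\eta)=\vert c_{a_d}(\eta)\vert\ge\delta$ (its sign is $\mathrm{sgn}(\eta)\alpha(a_d)=\eta^2=1$), and a direct computation gives $\Vert f-\eta\ind_{\{a_d\}}\Vert=\max(1-a,a)=a$. Non-expansiveness, evaluated at $\sigma_0(a_d)$, then gives $c_{a_d}(\eta)\le F(f)(\sigma_0(a_d))+a$; letting $d\to\infty$ yields $\delta\le a$, contradicting $a<\delta$. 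Hence $\sigma$ is injective.

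In both cases, once $\sigma$ is injective the remaining steps of the proof of Theorem~\ref{main} go through and show that $F$ is an isometry. I expect the only genuine obstacle to be precisely this injectivity step --- ruling out that $\sigma$ collapses two accumulation points $u,v$ to a single $z$. The mechanism in (i) and (ii) is the same: for a function $f$ with $\vert F(f)\vert\equiv1$ near $z$, the sign of $f$ at $u$ (and at $v$) is rigidly determined by the action of $F$ on the elementary bumps $\ind_{\{a_d\}}$, a rigidity that $(*)$ is tailored to violate, and that the quantitative non-degeneracy in (ii) also contradicts by means of a bump near $u$ that $F$ cannot shrink below level $\delta$ near $z$.
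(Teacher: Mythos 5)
Your proposal is correct, and its overall strategy coincides with the paper's: reduce everything to the injectivity of $\sigma$ (noting that the $G_\delta$ hypothesis makes the closed set $K'$ a zero set, so Fact~\ref{l12} survives unchanged), and then derive a contradiction from $\sigma(u)=\sigma(v)=z$ by comparing a suitable test function with the bumps $\pm\ind_{\{a_d\}}$ and using the sign relation $\mathrm{sgn}\bigl(F(\omega\ind_{\{a\}})(\sigma(a))\bigr)=\omega\,\alpha(a)$. In case (i) your argument is essentially the paper's, just packaged as a separate ``rigidity'' lemma where the paper runs the same estimate $\Vert f\pm\alpha(a_i)\ind_{\{a_i\}}\Vert\leqslant 1$ inline; your handling of property~$(*)$ through the sign of the product $f(u)f(v)$ with $\omega=-\alpha(u)\alpha(v)$ is in fact slightly cleaner than the paper's choice of individual signs (which implicitly appeals to the symmetry between $u$ and $v$). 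In case (ii) you take a mildly different route: the paper uses a symmetric test function with $f\cdot\alpha\equiv-1/2$ near $u$ and $\equiv 1/2$ near $v$ together with bump comparisons along both nets, arriving at $F(f)(z)\leqslant -c+1/2<c-1/2\leqslant F(f)(z)$, whereas you invoke Fact~\ref{f3} (legitimately, since it requires neither $K'$ finite, nor metrizability, nor continuity of $F^{-1}$) to force $F(f)(z)=0$ from the $v$ side for $f=s\,\alpha\,\ind_U$ with $s\in(1/2,\delta)$, and then a single bump comparison near $u$ yields $\delta\leqslant s$, a contradiction. Both variants are valid; the paper's is self-contained within Section~\ref{S5}, while yours trades that for a shorter computation by importing Fact~\ref{f3} from Section~\ref{S4}.
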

\begin{proof}
\smallskip Looking back at the proof of Theorem~\ref{main}, we see that in fact, we only need to check one thing, namely that Lemma~\ref{l1} holds true as stated if either $F$ has property~$(*)$, or $\inf\,\{ \Vert F(\omega \ind_{\{a\}})\Vert;\; a\in K \minus K', \; \omega = \pm 1\} >1/2$. Indeed: once Lemma~\ref{l1} is known to be true, the assumption that $K'$ is finite is no longer used in the proof of Theorem~\ref{main}; under the assumption that $K'$ is $G_\delta$, Fact~\ref{l12} holds true as stated; and for the other parts of the proof of Theorem~\ref{main}, no additional assumption is needed on either $K'$ or $F$.

\smallskip\noindent The only trouble with the proof we have given for Lemma~\ref{l1} is at the end of the proof of Step~4, where we cannot conclude immediately that the map $\sigma$ is injective. So we have to find another way of showing that $\sigma$ is indeed injective, without assuming that $K'$ is  a finite set. 

\smallskip\noindent Before showing that $\sigma$ is injective, let us state the following fact: for every $a \in K \minus K'$, we have 
\begin{equation}\label{eqbiz} {\rm sgn}\Bigl(F(\omega \ind_{\{ a\}}) (\sigma(a))\Bigr)=\omega\, \alpha(a) \qquad\hbox{for $\omega =\pm 1$}.
\end{equation}
Indeed, we have shown at the end of the proof of Fact~\ref{keyformula} that for every $a \in K\minus K'$, we have $\alpha(a)={\rm sgn}\bigl(F(\ind_{\{ a\}}) (\sigma(a))\bigr)$. This implies (\ref{eqbiz}) since $F(\ind_{\{ a\}}) (\sigma(a))$ and $F(-\ind_{\{ a\}}) (\sigma(a))$ have opposite signs by Step~1 of the proof of Lemma~\ref{l1}.

\smallskip\noindent Towards a contradiction, assume that $\sigma$ is not injective, so one can find $u\neq v$ in $K$ such that $\sigma(u)=\sigma(v):= z$.\\
Since $\sigma(K') \subset K'$ and $\sigma_0$ is injective, we have $u,v \in K'$, so let us fix two nets $(a_i), (b_j)$ in $K\setminus K'$ such that $a_i\longrightarrow u$ and $b_j\longrightarrow v$. 

\smallskip\noindent
- Assume that $F$ satisfies property~$(*)$. Then, one can find $f\in B$ such that $\alpha(u)f(u)<0<\alpha(v)f(v)$ and $\vert F(f)\vert\equiv 1$ in a neighbourhood of $z$. By continuity of $\alpha$ and $f$, we have $\alpha(a_i) f\bigl(a_i)<0<\alpha\bigl(b_j)  f(b_j)$ for all large enough $i,j$, so that  $\Vert f+ \alpha(a_i) \mathds{1}_{\{ a_i\}}\Vert\leqslant 1$ and $\Vert f- \alpha( b_j) \mathds{1}_{\{b_j\}}\Vert\leqslant 1$.  Since $F$ is non-expansive, this implies in particular that $\vert F(f)(\sigma(a_i))- F(-\alpha(a_i)\mathds{1}_{\{a_i\}})(\sigma(a_i))\vert \leqslant 1$ and $\vert F(f)(\sigma(b_j))- F(\alpha(b_j)\mathds{1}_{\{b_j\}})(\sigma(b_j))\vert \leqslant 1$ for all large enough $i,j$. However, since $\sigma(a_i)\longrightarrow z$ and $\sigma(b_j)\longrightarrow z$ (because $\sigma$ is continuous), we also have $\vert F(f)(\sigma(a_i))\vert=1=\vert F(f)(\sigma(b_j))\vert$ for all large enough $i,j$. Since we know by (\ref{eqbiz}) that $F(-\alpha(a_i)\mathds{1}_{\{a_i\}})(\sigma(a_i))<0<F(\alpha(b_j)\mathds{1}_{\{b_j\}})(\sigma(b_j))$, it follows that $F(f)(\sigma(a_i))=-1$ and $F(f)(\sigma(b_j))=1$ for all large enough $i,j$. Hence $F(f)(z)$ is both equal to $-1$ and $1$, which is the required contradiction.

\smallskip\noindent
- Assume now that $c:=\inf\,\bigl\{ \Vert F(\omega \ind_{\{a\}})\Vert;\; a\in K \minus K', \; \omega = \pm 1\} >1/2$. Choose a function $f\in B$ with $\Vert f\Vert=1/2$ such that $f\cdot \alpha\equiv-1/2$ in a neighbourhood of $u$ and $f \cdot \alpha \equiv 1/2$ in a neighbourhood of $v$. Then, $f(a_i)\alpha(a_i)=-1/2$ and $f(b_j)\alpha(b_j)=1/2$ for all large enough $i,j$; which implies that $\Vert f+\alpha(a_i)\mathds{1}_{\{ a_i\}}\Vert= 1/2$ and $\Vert f-\alpha(b_j)\mathds{1}_{\{ b_j\}}\Vert= 1/2$. So we get $\vert F(f)(\sigma(a_i))-F(-\alpha(a_i)\mathds{1}_{\{a_i\}})(\sigma(a_i))\vert \leqslant 1/2$ and $\vert F(f)(\sigma(b_j))- F(\alpha(b_j)\mathds{1}_{\{b_j\}})(\sigma(b_j))\vert \leqslant 1/2$ for all large enough $i,j$. But $F(-\alpha(a_i)\mathds{1}_{\{a_i\}})(\sigma(a_i))\leqslant -c$ and $F(\alpha(b_j)\mathds{1}_{\{b_j\}})(\sigma(b_j))\geqslant c$, by (\ref{eqbiz}) and the definition of $c$. So we see that $F(f)(\sigma(a_i))\leqslant -c+1/2$ and $F(f)(\sigma(b_j))\geqslant c-1/2$ for all large enough $i,j$. Hence, $F(f)(z)\leqslant -c+1/2<0<c-1/2\leqslant F(f)(z)$, which is again a contradiction.
\end{proof}
\begin{corollary} If $K$ is a countable compact Hausdorff space, then any non-expansive bijection $F:B_{C(K)}\to B_{C(K)}$ either sending extreme points to extreme points or such that $\inf\,\{ \Vert F(f)\Vert;\; \Vert f\Vert=1\} >1/2$ is an isometry.
\end{corollary}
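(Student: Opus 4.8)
The plan is to deduce this from Theorem~\ref{main3}, so the only work is to check that a countable compact Hausdorff space $K$ satisfies the standing hypotheses of that theorem and that each of the two assumptions on $F$ implies one of the alternatives (i), (ii). For the first point, I would argue exactly as in the corollary following Theorem~\ref{main2}: a countable compact Hausdorff space $K$ is zero-dimensional and scattered — its Cantor--Bendixson derivatives must reach $\emptyset$, since a non-empty perfect compact Hausdorff space is uncountable — and a scattered space has a dense set of isolated points, because every non-empty open subset is itself scattered, hence contains a point isolated in it, which is then isolated in $K$ since the subset is open. Moreover $K$ is metrizable (a countable compact Hausdorff space is second countable), so the closed set $K'$ is a $G_\delta$ subset of $K$. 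Thus $K$ meets all the hypotheses of Theorem~\ref{main3}.

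Suppose first that $F$ maps extreme points of $B_{C(K)}$ to extreme points. Recalling that the extreme points of $B_{C(K)}$ are precisely the continuous functions with values in $\{-1,1\}$, I claim $F$ then has property~$(*)$. Given $u\neq v$ in $K$, a point $z\in K$ and a sign $\omega\in\{-1,1\}$, I would choose an extreme point $g\in B_{C(K)}$ with $\mathrm{sgn}\bigl(g(u)g(v)\bigr)=\omega$: take $g=\ind$ if $\omega=1$, and, using zero-dimensionality, take a clopen set $U$ with $u\in U$ and $v\notin U$ and put $g=\ind_U-\ind_{K\setminus U}$ if $\omega=-1$. Then $g(u)g(v)$ is non-zero with sign $\omega$, while $F(g)$ is an extreme point, so $\vert F(g)\vert\equiv 1$ on all of $K$, in particular on a neighbourhood of $z$. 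Hence $(*)$ holds, and Theorem~\ref{main3}~(i) shows that $F$ is an isometry.

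Suppose instead that $\inf\{\Vert F(f)\Vert;\ \Vert f\Vert=1\}>1/2$. For every isolated point $a\in K\setminus K'$ and every $\omega=\pm 1$, the function $\omega\ind_{\{a\}}$ lies in $C(K)$ and has norm $1$, so $\Vert F(\omega\ind_{\{a\}})\Vert$ is at least that infimum; therefore $\inf\{\Vert F(\omega\ind_{\{a\}})\Vert;\ a\in K\setminus K',\ \omega=\pm 1\}>1/2$, which is exactly assumption~(ii) of Theorem~\ref{main3}, and again $F$ is an isometry. I do not expect any real obstacle here: the only inputs beyond Theorem~\ref{main3} are the standard topological facts that a countable compact Hausdorff space is zero-dimensional, has a dense set of isolated points, and is metrizable (so that $K'$ is $G_\delta$), together with the elementary description of the extreme points of $B_{C(K)}$.
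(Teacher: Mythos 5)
Your proposal is correct and follows the same route as the paper: verify that a countable compact Hausdorff $K$ is zero-dimensional with a dense set of isolated points and metrizable (so $K'$ is $G_\delta$), and then invoke Theorem~\ref{main3}, parts (i) and (ii). You simply spell out the details the paper leaves implicit — that preservation of extreme points (the $\pm 1$-valued functions) yields property~$(*)$, and that the infimum condition over the whole unit sphere restricts to the functions $\omega\ind_{\{a\}}$ — and these verifications are accurate.
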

\begin{proof} The space $K$ is zero-dimensional with a dense set of isolated points, and it is also metrizable (so that the closed set $K'$ is $G_\delta$). Hence, the result follows from Theorem~\ref{main3}, (i) and (ii).
\end{proof}

\begin{corollary} Any non-expansive bijection $F:B_{\ell_\infty}\to B_{\ell_\infty}$ either sending extreme points to extreme points or such that $\inf\,\{ \Vert F(f)\Vert;\; \Vert f\Vert=1\} >1/2$ is an isometry.
\end{corollary}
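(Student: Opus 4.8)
The plan is to reduce the statement to Theorem~\ref{main3} by means of the fact that $\ell_\infty$ is isometric to $C(\beta\N)$, where $\beta\N$ is the Stone--\v{C}ech compactification of $\N$. First I would recall that $\beta\N$ is a zero-dimensional compact Hausdorff space whose set of isolated points is exactly $\N$, which is dense in $\beta\N$ (the same observation already used for the corollary ``Any non-expansive homeomorphism of $B_{\ell_\infty}$ is an isometry''); thus $K\minus K'=\N$ for $K=\beta\N$. The one point that requires a short argument is that $K'=\beta\N\minus\N$ is a $G_\delta$ subset of $\beta\N$: since $\beta\N$ is Hausdorff, each singleton $\{ n\}$ is closed, so $\N=\bigcup_{n\in\N}\{ n\}$ is an $F_\sigma$ set and hence $K'$ is $G_\delta$. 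Transporting the given non-expansive bijection $F:B_{\ell_\infty}\to B_{\ell_\infty}$ through this isometry, we obtain a non-expansive bijection of $B_{C(\beta\N)}$ onto itself, still denoted $F$; it then suffices to check that the corresponding condition (i) or (ii) of Theorem~\ref{main3} holds for $F$.

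Suppose first that $F$ sends extreme points of $B_{C(\beta\N)}$ to extreme points. I would verify property~$(*)$ directly. Given $u\neq v$ in $\beta\N$, a sign $\omega\in\{ -1,1\}$ and an arbitrary $z\in\beta\N$, take $f=\ind$ (the constant function) if $\omega=1$, and $f=\ind_U-\ind_{\beta\N\minus U}$ if $\omega=-1$, where $U$ is a clopen subset of $\beta\N$ with $u\in U$ and $v\notin U$ (such a $U$ exists since $\beta\N$ is zero-dimensional and Hausdorff). In both cases $f$ is an extreme point of $B_{C(\beta\N)}$ with $f(u)f(v)=\omega\neq 0$; hence $F(f)$ is an extreme point, i.e.\ $\vert F(f)\vert\equiv 1$ on all of $\beta\N$, in particular on a neighbourhood of $z$. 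So $(*)$ holds and Theorem~\ref{main3}~(i) shows that $F$ is an isometry.

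Suppose instead that $c:=\inf\{ \Vert F(f)\Vert;\; \Vert f\Vert=1\}>1/2$. For every $a\in\N$ and every $\omega=\pm 1$, the function $\omega\ind_{\{ a\}}$ lies on the unit sphere of $C(\beta\N)$, so $\inf\{ \Vert F(\omega\ind_{\{ a\}})\Vert;\; a\in\beta\N\minus K',\ \omega=\pm 1\}\geqslant c>1/2$, and Theorem~\ref{main3}~(ii) applies. In either case $F$ is an isometry, which proves the corollary. The only genuinely new verification here is that $\beta\N\minus\N$ is $G_\delta$; everything else amounts to matching the hypotheses of Theorem~\ref{main3}, so no real obstacle is expected.
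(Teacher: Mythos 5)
Your proposal is correct and follows essentially the same route as the paper: identify $\ell_\infty$ with $C(\beta\N)$, note that $\beta\N$ is zero-dimensional with $\N$ as a dense set of isolated points and that $K'=\beta\N\setminus\N$ is $G_\delta$, and then invoke Theorem~\ref{main3}~(i) and~(ii). The only difference is that you spell out details the paper leaves implicit (the $F_\sigma$ argument for $\N$, the explicit extreme points witnessing property~$(*)$, and the restriction of the infimum condition to the functions $\omega\ind_{\{a\}}$), all of which are verified correctly.
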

\begin{proof} The space $\ell_\infty$ is isometric to $C(\beta\N)$, where $\beta\N$ is the Stone-$\check{\rm C}$ech compactification of $\N$. The space $K=\beta\N$ is zero-dimensional with a dense set of isolated points, and $K'=\beta\N\setminus\N$ is a $G_\delta$ subset of $\beta\N$. So we may apply Theorem~\ref{main3}, (i) and (ii).
\end{proof}


\begin{thebibliography}
\rm\bibitem{AKZ} C. Angosto, V. Kadets and O. Zavarzina, \emph{Non-expansive bijections, uniformities and polyhedral faces}. J. Math. Anal Appl. {\bf 471} (2019), 38--52.
\rm\bibitem{BL} Y. Benyamini and J. Lindenstrauss, \textit{Geometric nonlinear functional analysis}. American Mathematical Society Colloquium Publications {\bf 48} (2000).
\rm\bibitem{CKOW} B. Cascales, V. Kadets, J. Orihuela and E. J. Wingler, \emph{Plasticity of the unit ball of a strictly convex Banach space}. RACSAM {\bf 110} (2016), 723--727.
\rm\bibitem{HLZ} R. Haller, N. Leo and O. Zavarzina, \emph{Two new examples of Banach spaces with a plastic unit ball}. Acta Comment. Univ. Tartu. Math. {\bf 26} (2022), 89--101.
\rm\bibitem{KZ1} V. Kadets and O. Zavarzina, \emph{Plasticity of the unit ball of $\ell_1$}.  Visn. Hark. nac. univ. im. V.N. Karazina, Ser.: Mat. prikl. mat. meh. {\bf 83} (2017), 4--9.
\rm\bibitem{KZ2} V. Kadets and O. Zavarzina, \emph{Nonexpansive bijections to the unit ball of the $\ell_1$-$\,$sum of strictly convex Banach spaces.}.  Bull. Aust. Math. Soc. {\bf 97} (2018), 285--292.
\rm\bibitem{L} N. Leo, \emph{Plasticity of the unit ball of $c$ and $c_0$}. J. Math. Anal Appl. {\bf 507} (2022).
\rm\bibitem{M} P. Mankiewicz, On extension of isometries in normed linear spaces. Bull. Acad. Polon. Sci. {\bf 20}  (1972), 367--371.
\rm\bibitem{NPW} S. A. Naimpally, Z. Piotrowski and E. J. Wingler, \emph{Plasticity in metric spaces}. J. Math. Anal Appl. {\bf 313} (2006), 38--48.
\end{thebibliography}
\end{document}